\newcommand{\ts}[2]{{_{#1}}\!\times_{#2}\!}
\title{Vector bundles and differential bundles in the category of smooth manifolds}
\author{
    Benjamin MacAdam
    \footnote{Department of Computer Science, University of Calgary, Calgary, Canada.}
    \footnote{A very special thanks to Matthew Burke, who provided crucial insights to earlier versions of this work.}
    \footnote{Benjamin MacAdam declares they have no competing financial interests.}
    }
\newtheorem{theorem}{Theorem}[subsection]
\newtheorem{corollary}[theorem]{Corollary}
\newtheorem{proposition}[theorem]{Proposition}
\theoremstyle{definition}
\newtheorem{definition}[theorem]{Definition}
\newtheorem{lemma}[theorem]{Lemma}
\theoremstyle{remark}
\newtheorem{remark}[theorem]{Remark}
\newtheorem{example}[theorem]{Example}
\begin{document}
\maketitle
\abstract{
  A tangent category is a category equipped with an endofunctor that satisfies certain axioms which capture the abstract properties of the tangent bundle functor from classical differential geometry.
  Cockett and Cruttwell introduced differential bundles in 2017 as an algebraic alternative to vector bundles in an arbitrary tangent category. 
  In this paper, we prove that differential bundles in the category of smooth manifolds are precisely vector bundles. 
  In particular, this means that we can give a characterisation of vector bundles that exhibits them as models of a tangent categorical essentially algebraic theory.
}

\tableofcontents

\section{Introduction}
\label{sec:introduction}

A smooth vector bundle is a model of an algebraic theory in the category of smooth manifolds that satisfies an additional topological axiom.
If $q: E \rightarrow M$ is the underlying projection of the vector bundle then these axioms include the data of a zero section $\xi: M \rightarrow E$, an addition $+_q: E\ts{q}{q} E \rightarrow E$ and a scalar multiplication $\bullet_q:\mathbb{R}\times E \rightarrow E$ that satisfy the appropriate axioms describing an $\mathbb{R}$-module in the slice category over $M$.
The additional topological axiom is that vector bundles are \emph{locally trivial}.
This means that $q: E \rightarrow M$ is locally isomorphic to a projection $\pi_0: U\times \mathbb{R}^n \rightarrow U$ for some open set $U$ and natural number $n$.
The advantage of using the local triviality condition as part of the definition of a smooth vector bundle is that it makes clear how to perform calculations using local coordinates.
However, the local triviality condition axiomatises the existence of a trivialization, which is not an algebraic condition.

The main results of this paper are about \emph{differential bundles} in a tangent category.
A tangent category consists of a category $\mathbb{X}$ equipped with an endofunctor $T$ on $\mathbb{X}$ that satisfies axioms which capture the abstract properties of the tangent bundle functor from classical differential geometry.
The idea behind the definition of a differential bundle is to axiomatise the following fundamental property of vector bundles: if $q: E \rightarrow M$ is a vector bundle, and $x\in M$ and $v\in E_x$ is a vector in the fibre above $x$, then the tangent space $T_v(E_x)$ is naturally identified with $E_x$.
In particular every differential bundle $q:E \rightarrow M$ has a \emph{universal lift} $\mu:E\ts{q}{q}E \rightarrow T(E)$ which is analogous to the map $\mu(a, b) = \frac{d}{dt}|_0\left(a+t\bullet_q b\right)$\footnote{
    	In this paper, we make use of applicative composition of morphisms $f \circ g = A \xrightarrow{g} B \xrightarrow{f} C$
    } in the theory of vector bundles and a \emph{vertical lift} $\lambda:E \rightarrow T(E)$ given by $\lambda(e) = \mu(0, e)$.
(In Section 1 of \cite{MR1618130} these maps are called the \emph{big} and \emph{small vertical lifts} respectively and the big lift also appears in 6.11 of \cite{MR1202431}.)
Then the local triviality condition is replaced by the algebraic condition (in the sense of Freyd and Kelly \cite{MR322004}) that
\[\begin{tikzcd}
    E\ts{q}{q} E \rar{\mu} \dar{q\pi_0} & T(E) \dar{T(q)}\\
    M \rar{0} & T(M)
\end{tikzcd}\]
is a pullback preserved by iterated applications of the tangent bundle functor $T$.
The work in \cite{MR3792842} and \cite{MR3684725} has provided evidence that differential bundles are an appropriate generalisation of vector bundles and many of the results of classical differential geometry concerning vector bundles hold for differential bundles.
However, there is no proof of the equivalence between the vector bundles and differential bundles in the category of smooth manifolds in the literature.
In this paper, we give a proof of this result.
Specifically in \cref{sec:differential_bundles_are_vector_bundles} we prove:
\begin{restatable*}{theorem}{isoOfCategories}\label{thm:iso-of-categories}
    The category of differential bundles (with linear or bundle morphisms) in the category of smooth manifolds is isomorphic to the category of smooth vector bundles (with linear or bundle morphisms, respectively).
\end{restatable*}
Our proof makes use of a more general result, which proves that in a tangent category with negatives every differential bundle is the retract of a pullback of the tangent projection on its total space (\cref{cor:splitting-of-differential-bundles}).
The category of vector bundles is closed to idempotent splittings and reindexing, so the result follows.


In addition to making rigorous the relationship between differential bundles and vector bundles, we describe some alternative characterisations of differential bundles.
To do this, we introduce pre-differential bundles, which captures the equational fragment of the definition of a differential bundle.
In \cref{sec:differential_bundles} we show how a universal property on a pre-differential bundle induces an addition map.
Later we give the following characterisation of differential bundles in the category of smooth manifolds:
\begin{restatable*}{corollary}{essentialAlgebraicDescent}
  \label{intro:essentially-algebraic-definition}
  A pre-differential bundle $(q: E \to M, \xi, \lambda)$ in the category of smooth manifolds is a differential bundle if and only if
  \begin{equation*}
    \begin{tikzcd}[ampersand replacement=\&]
      E \rar{\lambda} \dar{q} \& TE \dar{(p,Tq)} \\
      M \rar{(\xi,0)} \& E \times TM
    \end{tikzcd} 
  \end{equation*}
  is a pullback.
\end{restatable*}

We prove this using \textit{retractive} display systems and \textit{strong} differential bundles.
A differential bundle $E \xrightarrow{q} M$ satisfies the universal property that its vertical bundle - the kernel of $Tq: TE \to TM$ - decomposes into the fibre product $E \ts{q}{q} E$.
A differential bundle $E \xrightarrow{q} M$ is \textit{strong} when the kernel of the projection $p: TE \to E$ decomposes into a fibre product of $E$ and $TM$ over $M$, the precise definition of a strong differential bundle is in \cref{def:strong-differential-bundle}.
A \emph{proper retractive display system} is a class of maps in a tangent category that satisfies certain axioms which capture the abstract properties of the class of submersions in the category of smooth manifolds.
(A smooth function is a submersion whose derivative at every point is surjective.)
The theory of retractive display systems and the main example (tangent categories where the \textit{tangent submersions} form a display system) is developed in \cref{sub:retractive-display}.

\subsection{Tangent categories}
\label{subsec:tangent_categories}

In this section, we recall the definition of a tangent category. A tangent
category consists of a category $\mathbb{X}$ equipped with a 
structure that axiomatises the properties of the tangent bundle functor from
classical differential geometry. The idea of a tangent category originated in
\cite{MR2725166} and was further developed in \cite{MR3192082}. We begin
with the definition of an additive bundle which is a basic building block for
the theory.

\begin{definition}
  An \emph{additive bundle over $M$} is a commutative monoid in the slice
  category $\mathbb{X}/M$. Explicitly: an additive bundle consists of a
  projection $q:E \rightarrow M$, an addition $+:E\ts{q}{q}E \rightarrow E$ and
  a zero $\xi:M \rightarrow E$ satisfying the usual axioms for a commutative monoid.
\end{definition}

We often use $E_n$ to denote the $n$-fold pullback
$E\ts{q}{q}E\ts{q}{q}...\ts{q}{q}E$, and write the addition map as an infix operation
$\pi_0 +_q \pi_1$. 

\begin{definition}
  If $q$ and $q'$ are additive bundles then \emph{an additive bundle morphism $f:q \Rightarrow q'$} is a square
  \[\begin{tikzcd}
      E \dar{q} \rar{f_1} & E' \dar{q'}\\
      M \rar{f_0} & M'
    \end{tikzcd}\] such that $(f_1\pi_0 +_{q'} f_1\pi_1) = f_1 (\pi_0+_q \pi_1)$
     and $f_1\xi = \xi'f_0$.
\end{definition}

The following definition describes a specific type of limit that repeatedly occurs in the theory of tangent categories.

\begin{definition}
  If $T$ is an endofunctor on $\mathbb{X}$ then a limit diagram in $\mathbb{X}$ is a \emph{$T$-limit diagram} if and only if it is preserved by $T^n$ for all $n\in \mathbb{N}$ (we will often simply say ``$T$-limit''). We write $T$-pullback, $T$-equaliser etc. for the appropriate specialisations of this definition.
\end{definition}

The following is definition 2.1 in \cite{MR3792842}.

\begin{definition}
  A \emph{tangent category} is a category $\mathbb{X}$ equipped with:
  \begin{itemize}
  \item an endofunctor $T$ on $\mathbb{X}$
  \item a natural transformation $p:T \Rightarrow id_{\mathbb{X}}$ such that
    each $n$-fold pullback $T_n:=T\ts{p}{p}...\ts{p}{p}T$ exists and at each $M$
    the limit $T_n(M)$ is a $T$-limit
  \item natural transformations $0:id \Rightarrow T$, $+:T\ts{p}{p}T \Rightarrow
    T$, $\ell:T \Rightarrow TT$ and $c: TT \Rightarrow TT$
  \end{itemize}
  such that:
  \begin{itemize}
  \item for every object $M$: $p_M$, $0_M$ and $+_M$ form an additive bundle
      B
  \item $(\ell, 0):(T(M), p) \rightarrow (T^2(M), T(p))$ is an additive bundle
    morphism
  \item $(c, id):(T^2(M), T(p)) \rightarrow (T^2(M), p)$ is an additive bundle
    morphism
  \item $c^2=id$, $c\ell = \ell$, $T(\ell)\ell = \ell\ell$, $cT(c)c = T(c)cT(c)$
    and $T(\ell)c = cT(c)\ell$
  \item the lift $\ell$ is universal: the diagram
    \[\begin{tikzcd}
        T_2(M) \rar{\ell\pi_0 +_{T(p)}0\pi_1} \dar{q\pi_0 = q\pi_1} &[3ex]
        T^2(M) \dar{T(p)}\\ M \rar{0} & T(M)
      \end{tikzcd}\] is a $T$-pullback.
  \end{itemize}
\end{definition}
\begin{definition}
  A \emph{tangent category with negatives} is a tangent category with a natural transformation $-: T \Rightarrow T$ which makes all of the commutative monoids ($p_M$, $0_M$, $+_M$)  abelian groups.
\end{definition}

Examples of tangent categories include the category of smooth manifolds and the infinitesimally linear objects in a model of synthetic differential geometry (see \cite{MR2244115}). For more examples of tangent categories,  see Example 2.2 in \cite{MR3792842}.

\subsection{Retractive display systems}
\label{sub:retractive-display}

In the category of smooth manifolds, the projection for every vector bundle is a submersion.
Submersions have useful $T$-stability properties - the $T$-pullback along any submersion exists and is itself a submersion, and they are stable under the tangent functor. Tangent display systems were introduced by Cockett and Cruttwell and axiomatise the class of submersions' $T$-stability properties in an arbitrary tangent category \cite{MR3792842}. 
In this paper, we consider an extension to tangent display systems that are closed to retracts, which we call a retractive display system. 

\begin{definition}\label{def:display-system}
    A \textit{tangent display system} is a class of maps $\mathcal{D}$ that is:
    \begin{itemize}
        \item stable under $T$-pullbacks: the $T$-pullback along any map $d \in \mathcal{D}$ exists and is contained in $\mathcal{D}$,
        \item stable under the tangent functor.
    \end{itemize}
    We call any tangent display system that is closed to retracts in the arrow category a \textit{retractive} display system. 
    If for all $M$, $p_M \in \mathcal{D}$, we call $\mathcal{D}$ a proper (retractive) display system. 
\end{definition}
In this section, we shall show that the submersions in the category of smooth manifolds give a retractive display system, and give a general construction of retractive display systems from display systems. We recall the definition of a submersion:
\begin{definition}
   \label{def:submersion-sman}
   If $A$ and $B$ are smooth manifolds then a smooth function $f:A \rightarrow B$
   is a \emph{submersion} if and only if the derivative $Df|_a$ of $f$ at every point 
   $a\in A$ is a surjective linear map.
\end{definition}
In other words, $f$ is a submersion if and only if for all $a\in A$ and all $v\in T(B)$ such that $fa = pv$, there exists a $w\in T(A)$ such that $T(f)w = v$.
This is a \textit{weakly} universal cone over $A \xrightarrow{f} B \xleftarrow{p} TB$: there exists \textit{at least} one morphism into it for any other cone over the diagram. 
\begin{definition}\label{def:weak-pullback}
    We say that a commuting square is a \textit{weak pullback} if for any $x:X \to A$ and $y:X \to B$ so that $fx = gy$, there exists a map $X \to W$ making the following diagram commute:
    \[
        \begin{tikzcd}
        X \ar[bend left]{rrd}{x} \ar[bend right]{ddr}[swap]{y} \ar[dashed]{rd}{\exists} \\
            & W \rar{a} \dar[swap]{b} & A \dar{f} \\
            & B \rar[swap]{g} & C
        \end{tikzcd}
    \]    
\end{definition}
\begin{lemma}\label{lem:pb-retract}
    Should the pullback of $A \xrightarrow{f} C \xleftarrow{g} B$ exist, \cref{def:weak-pullback} is equivalent to asking the induced map $(a, b):W \to A \ts{f}{g} B$ be a split epimorphism.
\end{lemma}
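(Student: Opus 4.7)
The plan is to exploit the universal property of the actual pullback $A \times_C B$ in both directions, with the key observation that the induced comparison map $(a,b)\colon W \to A \ts{f}{g} B$ arises from $W$ itself being a commuting cone over $A \xrightarrow{f} C \xleftarrow{g} B$ via its structure maps $a$ and $b$. Once this map is in hand, the content of the lemma is essentially a routine translation between the weak factorisation property and the existence of a section.

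For the forward direction, I would take the weak pullback property of $W$ and apply it to the canonical cone given by the projections $\pi_A\colon A \ts{f}{g} B \to A$ and $\pi_B\colon A \ts{f}{g} B \to B$. This produces a map $s\colon A \ts{f}{g} B \to W$ with $as = \pi_A$ and $bs = \pi_B$. Composing with $(a,b)$ yields an endomorphism of $A \ts{f}{g} B$ whose post-composition with the two projections returns the projections themselves; by uniqueness of the map into the pullback, $(a,b) \circ s = \mathrm{id}$, which is exactly the statement that $(a,b)$ is a split epi.

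For the reverse direction, suppose $(a,b)$ is split by some $s\colon A \ts{f}{g} B \to W$. Given any test cone $(x\colon X \to A, y\colon X \to B)$ with $fx = gy$, the universal property of the pullback furnishes a unique $\langle x, y\rangle\colon X \to A \ts{f}{g} B$ with $\pi_A \langle x,y\rangle = x$ and $\pi_B \langle x, y\rangle = y$. Then $s \circ \langle x, y\rangle\colon X \to W$ is the required weak factorisation, since $a \circ s \circ \langle x, y\rangle = \pi_A \circ \langle x, y\rangle = x$ and similarly on the $B$ side.

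There is no real obstacle here: the proof is purely a matter of invoking the universal property of the pullback to convert between cones and maps into $A \ts{f}{g} B$. The only thing to be careful about is that the comparison map $(a,b)$ is well-defined, but this is immediate from the hypothesis that the square defining $W$ commutes.
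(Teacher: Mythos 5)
Your argument is correct: both directions are the standard translation between the weak factorisation property and a section of the comparison map, using the universal property of the genuine pullback exactly as one would expect. The paper omits the proof of this lemma as routine, and your write-up is precisely the routine argument it has in mind.
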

We take the submersion property for a map $f$ using global elements (for all $a\in A$ and all $v\in T(B)$ such that $fa = pv$, there exists a $w\in T(A)$ such that $T(f)w = v$) and state it using generalized elements.
\begin{definition}\label{def:tangent-submersion}
    An arrow $f:A \rightarrow B$ in a tangent category is a \emph{tangent submersion} if and only if the naturality diagram
    \[
        \begin{tikzcd}
            TA \rar{Tf} \dar[swap]{p} & TB \dar{p} \\
            A \rar[swap]{f} & B
        \end{tikzcd}
    \] 
    is a weak $T$-pullback. 
\end{definition}{}
Following \cref{lem:pb-retract}, in the case the pullback exists this is equivalent to asking for a section $h: A \ts{f}{p} TB \to TA$ of the horizontal descent $(p,Tf): TA \to A \ts{f}{p} TB$ (this section is sometimes called a \textit{horizontal lift} in differential geometry literature \cite{MR980716}). 
In smooth manifolds, the $T$-pullback along the projection $p:T \Rightarrow id$ always exists, so to prove that every submersion is a tangent submersion it suffices to show the existence of a horizontal lift.
\begin{proposition}
    In the category of smooth manifolds, the tangent submersions are precisely the classical smooth submersions.
\end{proposition}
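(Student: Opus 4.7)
The plan is to prove both inclusions. For the direction \emph{tangent submersion implies classical submersion}, I would apply the weak $T$-pullback property to generalized elements from the terminal object $1$. Given any point $a: 1 \to A$ and any tangent vector $v: 1 \to TB$ with $fa = pv$, the weak pullback property produces $w: 1 \to TA$ with $pw = a$ and $Tf(w) = v$. Viewing $w$ as an element of $T_a A$ mapping to $v$ under $T_a f$, this is exactly pointwise surjectivity of the derivative, so $f$ is a classical submersion. This direction has no real content beyond unpacking definitions.

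For the direction \emph{classical submersion implies tangent submersion}, the core task is to construct a smooth section $h: A \ts{f}{p} TB \to TA$ of the horizontal descent $(p, Tf): TA \to A \ts{f}{p} TB$. First I would observe that the pullback $A \ts{f}{p} TB$ exists in smooth manifolds, because $p: TB \to B$ is a submersion and the pullback of a smooth map along a submersion always exists as a transverse pullback; so by \cref{lem:pb-retract} producing such a section suffices to establish the weak pullback property. The section $h$ is precisely a horizontal lift, i.e.\ an Ehresmann connection on $f$. I would construct it by choosing a Riemannian metric on $A$ (via a partition of unity), setting the vertical subbundle $V = \ker(Tf) \subseteq TA$ (a smooth subbundle because $f$ is a constant-rank submersion), and defining the horizontal complement $H = V^\perp$. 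The restriction of $Tf$ to $H$ is then a fibrewise linear isomorphism onto $f^\ast TB \cong A \ts{f}{p} TB$, and $h$ is its inverse.

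To promote this from a weak pullback to a weak \emph{$T$-pullback}, I would verify that each iterated tangent $T^n f$ is again a classical submersion. A local computation in natural coordinates shows that if $f$ is a submersion then so is $Tf$ (the Jacobian of $Tf$ at $(a,\dot a)$ contains the Jacobian of $f$ at $a$ as a block, hence is surjective), and the general case follows by induction. Applying the horizontal lift construction at every level then exhibits the required weak pullback property at each $T^n$.

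The main obstacle is constructing the smooth global section $h$ in the second direction. Pointwise surjectivity of $T_a f$ is infinitesimal data attached to each point, whereas a smooth section of $(p, Tf)$ is a single global object; the gluing of pointwise data requires a genuine smoothing tool, specifically partitions of unity, which is equivalent to the fact that every surjective submersion admits an Ehresmann connection.
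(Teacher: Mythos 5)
Your proof is correct and follows essentially the same route as the paper: since the pullback $A \ts{f}{p} TB$ exists in smooth manifolds, the weak ($T$-)pullback condition reduces via \cref{lem:pb-retract} to producing a horizontal lift, which you build as an Ehresmann connection from a Riemannian metric (the paper simply cites VII.1 of \cite{MR3242761} for this construction). You additionally spell out the easy converse direction and the preservation under $T^n$ (via $T^n f$ being a submersion), details the paper leaves implicit.
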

\begin{proof}
    There is an explicit construction of a horizontal lift for a classical smooth submersion in VII.1 of \cite{MR3242761}.
\end{proof}
It is possible to show that the $T$-stability properties for submersions in the category of smooth manifolds follow from the general theory of weak pullbacks.
We begin by showing that weak pullbacks satisfy a weakened version of the pullback lemma and then show that the retract of a weak pullback is a weak pullback (the first lemma may be found in \cite{MR980716}).
\begin{lemma}[Pullback lemma]\label{lem:weak-pullback}
    Consider the diagram:
    \[
        \begin{tikzcd}
            \bullet \rar \dar \ar[rd, phantom, "(A)"] & \bullet \rar{f}\dar{g} \ar[rd, phantom, "(B)"] & \bullet\dar \\
            \bullet \rar & \bullet \rar & \bullet 
        \end{tikzcd}
    \]
	If $f,g$ are jointly monic, then $(A)$ is a (weak) pullback if and only if the outer perimeter $(A) + (B)$ is a (weak) pullback. (Note that when $(B)$ is a pullback, $f,g$ are jointly monic.)
\end{lemma}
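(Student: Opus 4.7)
The plan is to verify both directions by a diagram chase, with the two halves requiring different hypotheses. The reverse direction uses only the joint monicity of $(f, g)$; the forward direction additionally relies on $(B)$ being a (weak) pullback---which is the standard setting highlighted by the parenthetical remark, since in that scenario joint monicity of $(f, g)$ is automatic.

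For the reverse direction, suppose the outer rectangle $(A)+(B)$ is a (weak) pullback. Given a cone on $(A)$, consisting of a map $x$ into the top-middle vertex and $y$ into the bottom-left vertex with the evident compatibility, I would post-compose $x$ with $f$ and pair the result with $y$ to obtain a cone on the outer rectangle (using the commutativity of $(B)$ to discharge the outer cone condition). Its factorization produces a candidate $z$ into the top-left vertex of $(A)$. The key remaining step is to show that the top leg of $z$ coincides with $x$: after post-composing both with $f$ they agree by the outer-rectangle factorization, and after post-composing both with $g$ they agree by commutativity of $(A)$ together with the original cone condition. Joint monicity of $(f, g)$ then forces equality, so $z$ factors the original cone through $(A)$. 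In the strict pullback case, uniqueness of $z$ transports directly from uniqueness in the outer rectangle.

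For the forward direction, I would start with a cone on the outer rectangle. Using the (weak) pullback property of $(B)$ I lift the top component (together with the appropriate lower-row composition of the bottom component) to a map $w$ into the top-middle vertex, then verify that $(w, y)$ forms a cone on $(A)$, and finally extract the desired factorization into the top-left vertex via the weak pullback property of $(A)$.

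The hard part will simply be the bookkeeping---keeping separate which equalities hold by commutativity of the diagram, which require joint monicity, and which require the weak pullback properties. There is no deeper obstruction, since this is a well-known weakening of the classical pullback lemma (cf.\ \cite{MR980716}).
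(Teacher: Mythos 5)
Your argument is correct, and it is essentially the standard pullback-lemma chase that the paper's one-line proof (``the proof for pullbacks holds for weak pullbacks'') gestures at. In particular your reverse direction --- outer rectangle a (weak) pullback implies $(A)$ a (weak) pullback, using commutativity of $(B)$ to build the outer cone and joint monicity of $f,g$ to identify the top leg of the induced factorisation with $x$ --- is complete and is the only direction the paper actually invokes in \cref{lem:submersion}(c).

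Where you genuinely depart from the statement, and improve on it, is in the forward direction: you assume in addition that $(B)$ is a (weak) pullback in order to lift the top component of an outer cone into the middle column. That extra hypothesis is not optional. As literally stated, ``$f,g$ jointly monic and $(A)$ a (weak) pullback implies the outer rectangle is a (weak) pullback'' is false: take $(A)$ to be the identity square on the top-middle object, so that the outer rectangle is $(B)$ itself, and let $(B)$ be any commuting square with jointly monic $f,g$ that is not a weak pullback (for instance, in $\mathsf{Set}$, an empty top-middle object sitting over a cospan of singletons; $f$ and $g$ are then vacuously jointly monic, but the singleton cone over the cospan admits no lift to the empty set). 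So the ``only if'' half of \cref{lem:weak-pullback} should either carry the hypothesis that $(B)$ is a (weak) pullback, exactly as in your proof, or be replaced by the monicity-free pasting statement that the composite of two (weak) pullbacks is a (weak) pullback --- which is in fact what \cref{lem:submersion}(a) uses. Your bookkeeping of which equality uses which hypothesis is therefore not mere pedantry: it locates a genuine imprecision in the statement, while leaving every downstream application of the lemma intact. The only minor omission is the uniqueness clause of the forward direction in the strict case, which follows routinely from joint monicity of $f,g$ together with uniqueness in $(A)$.
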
    
\begin{proof}
    The proof for pullbacks holds for weak pullbacks.    
\end{proof}

\begin{lemma}\label{lem:wpb-retract}
    (Weak) pullbacks are closed to retracts.
\end{lemma}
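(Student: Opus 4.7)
The plan is to unfold what it means for the square
\[
\begin{tikzcd}
W \rar{a} \dar[swap]{b} & A \dar{f} \\
B \rar[swap]{g} & C
\end{tikzcd}
\]
to be a retract of a (weak) pullback square over $f',g'$ with apex $W'$: we have injection/retraction pairs $(i_X, r_X)$ for each $X \in \{W, A, B, C\}$ with $r_X i_X = \mathrm{id}_X$, and the evident naturality squares commute, so that $i_A a = a' i_W$, $i_B b = b' i_W$, $r_A a' = a r_W$, $r_B b' = b r_W$, together with $f' i_A = i_C f$, $g' i_B = i_C g$, and the dual identities with $r$.

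Given a test cone $x: X \to A$, $y: X \to B$ with $fx = gy$, first I would push it into the ambient pullback: the pair $(i_A x, i_B y)$ is a cone over $(f', g')$, since $f' i_A x = i_C f x = i_C g y = g' i_B y$. By the (weak) pullback property of the primed square there is a mediator $h: X \to W'$ with $a' h = i_A x$ and $b' h = i_B y$. Then I would set $u := r_W h: X \to W$ and check, using $r_A a' = a r_W$ and $r_A i_A = \mathrm{id}$, that
\[
a u = a r_W h = r_A a' h = r_A i_A x = x,
\]
and symmetrically $b u = y$. This establishes the weak pullback property.

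For the strong (unique) factorisation, suppose $u_1, u_2: X \to W$ are two mediators. Apply $i_W$ to obtain $i_W u_1, i_W u_2: X \to W'$; the naturality squares give $a' i_W u_i = i_A a u_i = i_A x$ and likewise $b' i_W u_i = i_B y$, so both are mediators for the same cone in the primed diagram. Uniqueness there forces $i_W u_1 = i_W u_2$, and applying $r_W$ (equivalently, using that $i_W$ is a split monomorphism) yields $u_1 = u_2$.

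The argument is essentially diagram chasing, and the only real choice is the formulation of ``retract in the arrow category''; once the four injection/retraction pairs and the naturality of the chosen arrows are spelled out, every identity above is a direct consequence. The main obstacle, then, is notational rather than conceptual: making precise the retract structure on commutative squares (equivalently, a retract in the functor category $[\bullet \leftarrow \bullet \rightarrow \bullet, \mathbb{X}]$ extended by the apex) so that the required equalities $r_A a' = a r_W$ and $r_B b' = b r_W$ are available without ambiguity.
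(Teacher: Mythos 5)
Your proof is correct and follows essentially the same route as the paper's: push the test cone forward along the section to get a cone over the weak pullback square, obtain a mediator there, and postcompose with the retraction on the apex, using the naturality identity $r_A a' = a r_W$ (in your notation, $r_W$ on the apex and $r_A$ on the leg) to verify the factorisation. Your treatment is more explicit than the paper's one-line chase and additionally spells out the uniqueness argument for genuine (non-weak) pullbacks via the split monomorphism $i_W$, which the paper leaves implicit.
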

\begin{proof}
    Suppose that $S'$ is a weak pullback, and $S$ is a retract of it in the category of commuting squares. Consider the following diagram (suppressing the subscripts for $s,r$):
    \[
        \begin{tikzcd}[column sep = small, row sep = small]
            Z \ar[bend right]{rrd} \ar[bend right]{ddr}  \ar[bend left, dashed]{rrr} & A \ar{rd}{x} \ar{dd}[swap, near end]{y} \ar{rr}{s} & & A'\ar{rd}{x'} \ar{dd}[swap, near end]{y'} \ar{rr}{r} & & A\ar{rd}{x} \ar{dd}[near end]{y} \\
            & & B \ar{dd}[near start]{w} \ar{rr}[near start]{s} 
            & & B' \ar{dd}[near start]{w'} \ar{rr}[near start]{r} & & B \ar{dd}[near start]{w} \\
            & C \ar{rd}[swap]{z} \ar{rr}[near start]{s} & & C' \ar{rd}[swap]{z'} \ar{rr}[near start]{r} & & C \ar{rd}[swap]{z}  \\
            & & D \ar{rr}{s} & & D' \ar{rr}{r}& & D
        \end{tikzcd}
    \]
    Given a cone for $S$, there is a corresponding cone for $S'$ which induces a map $Z \to A'$ and postcomposition with $r_A$ gives the desired map into $A$.
\end{proof}
Using these lemmas, it is straightforward to prove the following $T$-stability properties hold for tangent submersions.
\begin{lemma}\label{lem:submersion}
    In any tangent category $\mathbb{X}$:
    \begin{enumerate}[(a)]
        \item Tangent submersions are closed to composition.
        \item  Tangent submersions are closed to retracts. 
        \item Any $T$-pullback of a tangent submersion is a tangent submersion.
    \end{enumerate}
\end{lemma}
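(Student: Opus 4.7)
The approach is to characterise tangent submersions via generalised elements: a map $f:A \to B$ is a tangent submersion iff, for every $n \in \mathbb{N}$ and every pair $(x:X \to T^nA,\,v:X \to T^{n+1}B)$ with $T^nf \circ x = T^np_B \circ v$, there exists a ``horizontal lift'' $w:X \to T^{n+1}A$ satisfying $T^np_A \circ w = x$ and $T^{n+1}f \circ w = v$. This is just $T^n$ applied to the $p$-naturality square of $f$, reformulated as the weak pullback property against generalised elements. With this reformulation, part (b) is immediate from \cref{lem:wpb-retract}: if $f$ is a retract of a tangent submersion $f'$ in the arrow category, then functoriality of $T^n$ carries the retract data to a retract of the $T^n$-images of the two $p$-naturality squares, and \cref{lem:wpb-retract} passes the weak pullback property from $f'$ to $f$.

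For part (a), given tangent submersions $f:A \to B$ and $g:B \to C$ and a cone $(x,v)$ with $T^n(gf) \circ x = T^np_C \circ v$, the plan is to apply the lift property of $g$ at level $n$ to the compatible pair $(T^nf \circ x,\,v)$ to obtain an intermediate $w':X \to T^{n+1}B$ with $T^np_B \circ w' = T^nf \circ x$ and $T^{n+1}g \circ w' = v$, and then apply the lift property of $f$ at level $n$ to $(x,w')$ to produce the required $w:X \to T^{n+1}A$; a direct chase then gives $T^{n+1}(gf) \circ w = T^{n+1}g \circ w' = v$.

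For part (c), let $P := C \ts{g}{f} A$ be a $T$-pullback of a tangent submersion $f$ along some $g:C \to B$, with projection $\pi_C:P \to C$. Because $P$ is a $T$-pullback, $T^nP$ is identified with $T^nC \ts{T^ng}{T^nf} T^nA$, so any $x:X \to T^nP$ decomposes as a compatible pair $(x_C,x_A)$. Given a cone $(x,v)$ with $T^n\pi_C \circ x = T^np_C \circ v$, naturality of $p$ with respect to $g$ forces $(x_A,\,T^{n+1}g \circ v)$ to be compatible with $f$'s naturality square, so $f$'s lift property yields $w_A:X \to T^{n+1}A$ with $T^np_A \circ w_A = x_A$ and $T^{n+1}f \circ w_A = T^{n+1}g \circ v$. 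The pair $(v,w_A)$ then factors through $T^{n+1}P$ by the pullback universal property and supplies the required lift. The principal obstacle is the bookkeeping in (c): one must invoke $T$-stability of $P$ (not merely that $P$ is an ordinary pullback) to ensure that $(v,w_A)$ lands in $T^{n+1}P$ at every level $n$, and otherwise keep the various $p$-naturality squares for $f$, $g$ and $\pi_C$ correctly aligned.
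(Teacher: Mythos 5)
Your proposal is correct and takes essentially the same route as the paper: part (b) is exactly the retract-stability of weak pullbacks (\cref{lem:wpb-retract}), part (a) is the element-level form of pasting weak pullbacks (\cref{lem:weak-pullback}), and part (c) unpacks the paper's two-equal-composites argument, using the lift for $f$, the $T$-pullback property of $P$ at every level $T^n$, and joint monicity of the pullback projections to identify the induced map as the required horizontal lift. No gaps.
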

\begin{proof}
    (a) follows from \cref{lem:weak-pullback} while (b) follows from \cref{lem:wpb-retract}. It remains to prove (c):
    
    Suppose we have a $T$-pullback, where $u$ is a tangent submersion:
    \[
    \begin{tikzcd}
        A \rar{f} \dar{v} & M \dar{u} \\
        B \rar{g} & N
    \end{tikzcd}
    \]
    Then the following two diagrams are equal:
    \[
        \begin{tikzcd}
            TA \rar{Tf} \dar{Tv} & TM \rar{p} \dar{Tu} & M \dar{u} \\
            TB \rar{Tg} & TN \rar{p} & N 
        \end{tikzcd}
        =
        \begin{tikzcd}
            TA \rar{p} \dar{Tv} & A \rar{f} \dar{v} & M \dar{u} \\
            TB \rar{p} & B \rar{g} & N
        \end{tikzcd}
    \]
    the left diagram is a weak pullback by composition.  Therefore the outer perimeter of the right diagram is a weak pullback, and the right square is a pullback, so the left square is a weak pullback by the weak pullback lemma, as desired.
\end{proof}
We see that the class of tangent submersions is closed to retracts in the arrow category, and is conditionally closed under $T$-reindexing (if the $T$-pullback of a tangent submersion exists, it is a tangent submersion).
This leads to the following result:
\begin{proposition}\label{prop:display-submersions-are-r-display}
    Let $\mathbb{X}$ be a tangent category that allows for $T$-reindexing of the class of tangent submersions $\mathcal{R}$.
    Then the class of tangent submersions is a display system.
\end{proposition}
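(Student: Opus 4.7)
The plan is to check the two axioms of a tangent display system (\cref{def:display-system}) for the class $\mathcal{R}$ of tangent submersions. Closure under $T$-pullbacks follows immediately from \cref{lem:submersion}(c) together with the $T$-reindexing hypothesis, which supplies the existence of the relevant pullbacks. The remaining step, and the only nontrivial one, is to show that $\mathcal{R}$ is closed under the tangent endofunctor $T$: that $Tf$ is itself a tangent submersion whenever $f$ is.

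To verify this I would compare two squares whose bottom edge is $Tf$. The first is the naturality square of $p$ at $Tf$, with vertical sides $p_{TA}$ and $p_{TB}$; this is the square we need to be a weak $T$-pullback. The second is $T$ applied to the naturality of $p$ at $f$, with vertical sides $Tp_A$ and $Tp_B$. The canonical flip $c$ intertwines them: the tangent category axioms $p \circ c = Tp$, $c^2 = \mathrm{id}$, and the naturality $c \circ T^2 f = T^2 f \circ c$ imply that pre- and post-composition with the isomorphisms $c_A$ and $c_B$ convert one square into the other. Hence each is a weak pullback if and only if the other is, and, using $T^n c$ as the intertwining isomorphism, the same holds after applying $T^n$ for every $n$.

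With this reduction the second square is tractable. Since $f$ is a tangent submersion, the naturality of $p$ at $f$ is a weak $T$-pullback; the $T$-reindexing hypothesis guarantees the underlying pullback $A \ts{f}{p} TB$ exists as a $T$-pullback, and by \cref{lem:pb-retract} the weak pullback condition amounts to a splitting of the comparison map into this pullback. Because each $T^n$ preserves the $T$-pullback and every functor preserves split epimorphisms, $T^{n+1}$ applied to the $f$-naturality square is a weak pullback for every $n$. Combined with the $c$-intertwining above, this shows that the naturality of $p$ at $Tf$ is a weak $T$-pullback, so $Tf \in \mathcal{R}$ as required.

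The main technical obstacle will be the bookkeeping with $c$ and the $T$-limit structure, in particular verifying that the intertwining really does transport weak pullbacks to weak pullbacks at every tangent level, and that $T$-reindexing delivers a $T$-pullback (and not merely an ordinary pullback) at the key step so that the preservation-by-$T^n$ argument goes through uniformly. Closure under retracts, should one wish to upgrade the conclusion to a retractive display system, is already \cref{lem:submersion}(b).
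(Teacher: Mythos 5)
Your proposal is correct, and it is in fact more complete than the paper's own proof. For stability under $T$-pullbacks and for closure under retracts you use exactly the same ingredients as the paper (\cref{lem:submersion}(c) plus the reindexing hypothesis, and \cref{lem:submersion}(b) respectively). The difference is the second display-system axiom, stability under the tangent functor: the paper's proof is a single sentence asserting that closure under $T$-reindexing already yields a tangent display system, and it never verifies that $Tf$ is a tangent submersion when $f$ is. You correctly identify this as the one nontrivial step and supply a valid argument: the identities $pc = T(p)$, $c^2 = \mathrm{id}$ and naturality of $c$ show that conjugating the naturality square of $p$ at $Tf$ by $c_A, c_B$ yields $T$ applied to the naturality square of $p$ at $f$, and since isomorphic squares (and their images under each $T^n$, intertwined by $T^n(c)$) are weak pullbacks simultaneously, the weak $T$-pullback property transfers. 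One small simplification is available: since ``weak $T$-pullback'' already means that $T^m$ of the square is a weak pullback for every $m$, you do not need to route through \cref{lem:pb-retract} and split epimorphisms to see that $T$ of the $f$-naturality square is again a weak $T$-pullback --- it is so by definition, with indices shifted by one. That detour is harmless under the reindexing hypothesis, but the direct argument avoids any reliance on the existence of the pullback object.
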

\begin{proof}
    Any class of maps that is closed to $T$-reindexing is a tangent display system, and the class of submersions is closed to retracts in the arrow category.
\end{proof}
In the category of smooth manifolds, where the class of smooth submersions is the canonical example of a proper tangent display system, this gives the following:
\begin{corollary}\label{cor:sman-r-display}
    The class of submersions in the category of smooth manifolds is a  proper retractive display system.
\end{corollary}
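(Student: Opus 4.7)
The plan is to reduce the statement to Proposition~\ref{prop:display-submersions-are-r-display} together with one extra verification for properness. The only nontrivial fact that needs checking is that the category of smooth manifolds admits $T$-reindexing along the class of submersions: once this is in hand, the proposition yields that submersions form a tangent display system, the retractive condition follows immediately from Lemma~\ref{lem:submersion}(b), and properness reduces to showing that each tangent projection $p_M$ is itself a submersion.

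For the $T$-reindexing step, given a submersion $f: E \to M$ and an arbitrary smooth map $g: N \to M$, classical transversality theory guarantees that the pullback $N \times_M E$ exists as a smooth manifold, since any smooth map is automatically transverse to a submersion. To confirm that this pullback is actually a $T$-limit, I would invoke the standard identification $T(N \times_M E) \cong TN \times_{TM} TE$, obtained by a direct calculation of tangent spaces in local coordinates that trivialise $f$. Iterating this identification, and using that $Tf$ is itself a submersion (a routine chart-level computation of derivatives), gives preservation of the pullback by $T^n$ for every $n$, as required.

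Finally, properness is the observation that every tangent projection $p_M: TM \to M$ is a surjective submersion: in any chart it is the projection $U \times \mathbb{R}^k \to U$, whose derivative at each point is the standard surjection $\mathbb{R}^{2k} \to \mathbb{R}^k$. The main --- indeed essentially the only --- obstacle is the $T$-stability of pullbacks along submersions, which although standard in classical differential geometry, is the piece of non-categorical content needed to bridge the abstract framework of retractive display systems with the concrete setting of smooth manifolds.
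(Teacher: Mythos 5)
Your proposal is correct and follows essentially the same route as the paper: reduce to \cref{prop:display-submersions-are-r-display} via the identification of tangent submersions with classical submersions, supply the classical facts that pullbacks along submersions exist and are preserved by $T$ (which the paper treats as the standard example of a proper tangent display system), and observe that each $p_M$ is a submersion for properness. You merely spell out the chart-level verifications that the paper leaves implicit.
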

We can also specify this result to a tangent category with $T$-pullbacks.
\begin{corollary}\label{cor:submersions-are-descent}
    The split tangent submersions in a tangent category in which all pullbacks exist and are $T$-limits form a retractive display system. 
\end{corollary}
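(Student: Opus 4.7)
My plan is to verify the three defining conditions of a retractive display system for the class of split tangent submersions in $\mathbb{X}$: closure under retracts in the arrow category, closure under $T$-pullback, and closure under the tangent endofunctor $T$.

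Closure under retracts is immediate from \cref{lem:submersion}(b). Closure under $T$-pullback follows from the hypotheses together with \cref{lem:submersion}(c): since every pullback exists and is a $T$-limit, the relevant $T$-pullback of a split tangent submersion $f$ exists, \cref{lem:submersion}(c) identifies the pulled-back map as a tangent submersion, and a splitting of $f$ is transported to a splitting of the $T$-pullback by the universal property of the ambient pullback.

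The substantive step is closure under the tangent endofunctor, which is not directly addressed by \cref{lem:submersion}. My plan is to use the canonical flip $c: T^2 \Rightarrow T^2$ together with the additive bundle morphism axiom $(c, id): (T^2 M, Tp) \to (T^2 M, p)$, which yields the identity $p_{TA} = Tp_A \circ c_A$, to factor the naturality square of $p$ at $Tf$ vertically as $T$ applied to the naturality square of $p$ at $f$, sandwiched above and below by the $c$-isomorphisms (naturality of $c$ at $f$ supplies the top square). Since $f$ is a split tangent submersion, the original naturality square is a weak $T$-pullback, so $T^n$ applied to it is a weak pullback for every $n$; conjugating by the $c$-isomorphisms preserves this, giving that the naturality square of $p$ at $Tf$ is itself a weak $T$-pullback. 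To produce the splitting for $Tf$, I plan to transport the given splitting $h: A \ts{f}{p} TB \to TA$ of $(p, Tf)$ through $T$ and then adjust by $c$: since all pullbacks are $T$-limits, $T$ preserves the relevant pullback, yielding $Th: TA \ts{Tf}{Tp_B} T^2 B \to T^2 A$, and conjugating by $c_A$ on the output and by the $c_B$-induced iso $TA \ts{Tf}{p_{TB}} T^2 B \to TA \ts{Tf}{Tp_B} T^2 B$ on the input produces the desired section of $(p_{TA}, T^2 f)$.

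The main obstacle is the bookkeeping in this last step: checking that the composite $c_A \circ Th \circ (\text{$c_B$-twist})$ genuinely splits $(p_{TA}, T^2 f): T^2 A \to TA \ts{Tf}{p_{TB}} T^2 B$ requires carefully commuting $c$ past $T^2 f$ via its naturality and repeatedly invoking $c^2 = id$ to reconcile the two pullback presentations involving $p_{TB}$ and $Tp_B$.
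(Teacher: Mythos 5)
Your proposal is correct, and it is in fact more complete than what the paper offers: the paper gives no explicit proof of this corollary, and the proof it does give for the neighbouring \cref{prop:display-submersions-are-r-display} simply asserts that closure under $T$-reindexing yields a display system, silently skipping the second clause of \cref{def:display-system} (stability under the tangent functor). You correctly identify that clause as the substantive step, and your argument for it is the right one: the identity $p_{T A} = Tp_A\circ c_A$ from the additive bundle morphism axiom for $c$ (together with $c^2 = id$) and naturality of $c$ decompose the naturality square of $p$ at $Tf$ into an iso-square stacked on $T$ applied to the naturality square of $p$ at $f$, and since the latter is a weak $T$-pullback, so is the composite. One simplification you can take: under the standing hypothesis that all pullbacks exist and are $T$-limits, \cref{lem:pb-retract} makes ``tangent submersion'' and ``split tangent submersion'' coincide, applied to each $T^n$ of the relevant square. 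So for all three closure properties it suffices to establish the weak ($T$-)pullback condition, and the sections you are transporting by hand --- through the ambient pullback for $T$-reindexing, through \cref{lem:submersion}(b) for retracts, and via the composite $c_A\circ Th\circ(\text{$c_B$-twist})$ for $Tf$ --- all come for free from the existence of the pullbacks. This disposes of the bookkeeping you flag as the main obstacle; the explicit $c$-conjugated section is a valid but unnecessary construction.
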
{}

\begin{remark}\label{rem:enriched-wfs-sdg}
    In synthetic differential geometry, the weak pullback used to define tangent submersions is equivalent to a weak lifting property. Recall that Day \cite{MR0396717} characterised the orthogonal lifting property between two maps $f \bot g$ in $\mathbb{X}$ as a pullback in $\mathsf{Set}$:
    \[
        \forall u,v:
        \begin{tikzcd}
            A \rar{u} \dar[swap]{f} & X \dar{g} \\
            B \rar{v} \ar[dashed]{ru}{\exists!} & Y
        \end{tikzcd}
        \iff 
        \begin{tikzcd}
            \mathbb{X}(B,X) \rar{\mathbb{X}(id,g)} \dar[swap]{\mathbb{X}(f,id)} \ar[phantom]{rd}{pb}
             & \mathbb{X}(B,Y) \dar{\mathbb{X}(f,id)} \\
            \mathbb{X}(A,X) \rar[swap]{\mathbb{X}(id,g)} & \mathbb{X}(A,Y)
        \end{tikzcd}
    \]
    For a monoidal closed category, this can be strengthened to a \textit{self-enriched} orthogonal factorisation system by using the internal hom $[-,-]$. For a weak factorisation system, the weak lifting property (where the requirement that the lift is unique is dropped) is equivalently characterised by asking the commuting diagram on the right be a weak pullback.
    
    In synthetic differential geometry (or any \textit{representable tangent category}), the tangent functor is represented by pointed infinitesimal object $0: 1 \to D$, so that $p_M = [0,M]: [D,M] \to [1,M] \cong M$. Thus the condition that the naturality square for $p$ at $f$ be a weak pullback may be reinterpreted as a weak lifting property:
    \[
        \forall m,\gamma:
        \begin{tikzcd}
            1 \dar[swap]{0} \rar{m} & M \dar{q} \\
            D \rar{\gamma} \ar[dashed]{ru}{\exists}& N    
        \end{tikzcd}
        \iff
        \begin{tikzcd}
            {[D,M]} \rar{[id,f]} \dar[swap]{[0,id]} \ar[phantom]{rd}{wpb} & {[D,N]} \dar{[0,id]} \\
            {[1,M]} \rar[swap]{[id,f]} & {[1,N]}
        \end{tikzcd}
    \]
\end{remark}

\section{Differential bundles}
\label{sec:differential_bundles}

In this section, we review the basic theory of differential bundles, and provide some new characterisations which we shall use throughout the rest of this paper. In the first section, we pull out the purely equational fragment of the definition of differential bundles, which we call pre-differential bundles, to simplify the definition of differential bundles. In the second section, we introduce differential bundles and show they are precisely pre-differential bundles satisfying a universal property.

\subsection{Pre-differential bundles}
\label{sub:pre-differential-bundles}

The original definition of a differential bundle consisted of an additive bundle and a lift satisfying various coherences and universal properties. 
In this section, we pull out the purely equational fragment of the definition of a differential bundle, which we call a pre-differential bundle. 
We begin with some basic theory regarding pre-differential bundles and how they relate to coalgebras of a weak comonad.

\begin{definition}
  \label{def:pre-differential-bundle}
  A \emph{pre-differential bundle} is a triple $(q:E \to M, \xi:M \to E, \lambda: E \to TE)$ so that $q\xi = id$,  $\ell\lambda = T(\lambda)\lambda$, $p\lambda = \xi q$, and $\lambda\xi = 0_E\xi$.
\end{definition}

From the definition of a tangent category, we have that $\ell\ell=T(\ell)\ell$, so that the pair $(T,\ell)$ is a \textit{weak} comonad. 
(For the definition of weak comonad see for instance 1.1 of \cite{MR3100133}.)
The condition that $T(\lambda)\lambda = \ell T(\lambda)$ is precisely the same as the requirement that $\lambda$  be a coalgebra of the weak comonad $(T,\ell)$. In fact if $\lambda$ is a coalgebra of $(T,\ell)$ then $p\lambda$ is an idempotent:
    \[ p \lambda p \lambda
        = p p T(\lambda) \lambda \\
        = p p \ell \lambda \\
        = p 0 p \lambda \\
        = p \lambda\]
and the condition that $\xi q = p\lambda$ states that $\xi,q$ is a splitting of the idempotent $p\lambda$.
This leads us to following proposition
\begin{proposition}
    A pre-differential bundle is precisely a coalgebra of $(T,\ell)$ equipped with a chosen idempotent splitting $\xi q=p\lambda$    
\end{proposition}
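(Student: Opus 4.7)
The proposition is a biconditional, and the forward direction is essentially tautological: a pre-differential bundle already packages exactly the coalgebra-with-splitting data. Condition 2 of \cref{def:pre-differential-bundle}, $T(\lambda)\lambda = \ell\lambda$, is the coassociativity law for a coalgebra of the weak comonad $(T,\ell)$; conditions 1 and 3 together assert that $(q,\xi)$ is a splitting of the endomorphism $p\lambda$, which is idempotent by the calculation given in the paragraph preceding the proposition; condition 4 is simply carried along as part of the data.

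The converse direction carries the real content: given a coalgebra $\lambda$ and a chosen splitting $\xi q = p\lambda$, $q\xi = \mathrm{id}$, one must derive condition 4, $\lambda\xi = 0\xi$. My plan is first to establish the weaker identity $\lambda\xi q = 0\xi q$ by the chain
\[
  \lambda\xi q \;=\; \lambda\, p\, \lambda \;=\; p\, T(\lambda)\, \lambda \;=\; p\, \ell\, \lambda \;=\; 0\, p\, \lambda \;=\; 0\, \xi q,
\]
whose five steps use, respectively, $\xi q = p\lambda$; naturality of $p$ at $\lambda$ (so that $p\,T(\lambda) = \lambda\, p$); the coalgebra axiom; the tangent-category identity $p\ell = 0p$; and $p\lambda = \xi q$ once more. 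Post-composing this equality with $\xi$ on the right and cancelling $q$ via $q\xi = \mathrm{id}$ then yields $\lambda\xi = 0\xi$, as required.

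The only step not directly visible in the axioms stated for a tangent category is the identity $p\ell = 0p$, which is a short consequence of $c\ell = \ell$ together with the additive-bundle-morphism conditions already imposed: $p\ell = pc\ell = T(p)\ell = 0p$. I do not anticipate any real obstacle here; the substance of the proposition is simply the observation that the fourth defining equation of a pre-differential bundle is redundant once the coalgebra law and the splitting identities are in force, and the bijection of data is then immediate in both directions.
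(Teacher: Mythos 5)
Your proof is correct and follows essentially the same route as the paper: the forward direction is definitional, and the converse reduces to deriving $\lambda\xi = 0\xi$ via exactly the chain $\xi q = p\lambda$, naturality of $p$, the coalgebra law, $p\ell = 0p$, and $q\xi = \mathrm{id}$ that the paper uses. The only cosmetic difference is that you cancel $q$ at the end rather than inserting $\xi q\xi = \xi$ at the start, and you spell out the (correct) derivation of $p\ell = 0p$ from the tangent category axioms, which the paper leaves implicit.
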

\begin{proof}
	We have checked that $p\lambda$ is always an idempotent, and $\xi,q$ splits it by definition. All that remains is to check the condition that $0\xi = \lambda\xi$.
	\[
        \lambda\xi 
        = \lambda\xi q \xi 
        = \lambda p \lambda \xi 
        = p T(\lambda) \lambda \xi 
        = p \ell \lambda \xi 
        = 0p \lambda \xi 
        = 0 \xi q \xi 
        = 0\xi
    \]
\end{proof}
There is a naturally defined category of pre-differential bundles, whose morphisms are pairs of maps $(f_1,f_0)$ which preserve the chosen idempotent splitting and preserve the coalgebra structure.
\begin{definition}
    The category of pre-differential bundles in a tangent category $\mathbb{X}$ has:
\begin{itemize}
    \item Objects: pre-differential bundles $(q:E \to M, \xi, \lambda)$
    \item Morphisms: A map $f: (q: E \to M,\xi,\lambda) \to (q': E' \to M', \xi', \lambda')$ is given by a pair of maps $f_1:E \to E', f_0:M \to M'$ so that the following diagrams commute. 
    \[
        \begin{tikzcd}
            E \rar{f_1} \dar[swap]{q}& E' \dar{q'} \\
            M \rar[swap]{f_0} & M'
        \end{tikzcd}
        \hspace{1cm}        
        \begin{tikzcd}
            TE \rar{Tf_1} & TE' \\
            E \rar[swap]{f_1} \uar{\lambda} & E' \uar[swap]{\lambda'}
        \end{tikzcd}
    \]
\end{itemize}
\end{definition}
We can see that the base maps are redundant data, so we can treat the category of differential bundles as a category of coalgebras over a weak comonad with extra data (the chosen splitting of $p\lambda$). 
\begin{proposition}
    The category of pre-differential bundles in $\mathbb{X}$ is isomorphic to the category of:
    \begin{itemize}
        \item Objects: pre-differential bundles $(q:E \to M, \xi, \lambda)$.
        \item Morphisms: a map $f: (q:E \to M, \xi, \lambda) \to (q':E' \to B', \xi', \lambda')$ is a coalgebra morphism $f: \lambda \to \lambda'$.
    \end{itemize}
\end{proposition}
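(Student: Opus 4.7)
The plan is to construct mutually inverse functors between the two categories. The forward functor $F$ is the obvious forgetful functor, sending a pre-differential bundle morphism $(f_1, f_0)$ to its top component $f_1$ (which is automatically a coalgebra morphism by definition). The content is in constructing the backward functor $G$ and in showing the two are inverse. The idea is that because $q\xi = \mathrm{id}$, the equation $f_0 q = q' f_1$ forces $f_0 = q' f_1 \xi$ (apply $\xi$ on the right), so the base map, if it exists, is uniquely determined. So I would define $G$ on a coalgebra morphism $f_1 : E \to E'$ by setting $f_0 := q' f_1 \xi$.

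The one substantive verification is that with this definition the square $f_0 q = q' f_1$ actually commutes, using only the coalgebra axiom $Tf_1 \circ \lambda = \lambda' \circ f_1$ together with the pre-differential bundle axioms $p\lambda = \xi q$, $p\lambda' = \xi' q'$, and $q'\xi' = \mathrm{id}$. The computation is
\[
q' f_1 \xi q \;=\; q' f_1 \, p\lambda \;=\; q' \, p \, Tf_1 \, \lambda \;=\; q' \, p \, \lambda' f_1 \;=\; q' \xi' q' f_1 \;=\; q' f_1,
\]
where the second equality uses naturality of $p$, the third uses the coalgebra condition, the fourth uses the target pre-differential axiom, and the last uses $q' \xi' = \mathrm{id}$. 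This is the key step; once it is in hand the rest is bookkeeping.

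To finish, I would check that $F$ and $G$ are mutually inverse. The composite $FG$ is the identity on morphisms by construction. For $GF$, given a pre-differential morphism $(f_1, f_0)$ in the original sense, we have $f_0 = f_0 q \xi = q' f_1 \xi$ since $q\xi = \mathrm{id}$ and $f_0 q = q' f_1$, so applying $G$ after $F$ recovers $(f_1, f_0)$. Functoriality (preservation of identities and composition) of $G$ follows from the formula $f_0 = q' f_1 \xi$: identity goes to identity, and for a composite $g_1 f_1$ the induced base is $q'' g_1 f_1 \xi = q'' g_1 \xi' q' f_1 \xi = g_0 f_0$ using the verified square $q' f_1 = f_0 q$ and $q'\xi' q' = q'$ (a consequence of $q'\xi' = \mathrm{id}$).

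The only point requiring any thought is the central calculation above; everything else is formal. Since $q$ is a split epimorphism, hence epic, the base map is forced, and the coalgebra condition plus the compatibility $p\lambda = \xi q$ exactly encode the data needed to push the base map through. In effect, the proposition is saying that the splitting datum $p\lambda = \xi q$ is a property of the coalgebra $\lambda$ (via the idempotent $p\lambda$), and a morphism of coalgebras automatically respects this property.
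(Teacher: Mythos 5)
Your proposal is correct and follows essentially the same route as the paper: both hinge on the observation that $q\xi = \mathrm{id}$ forces $f_0 = q'f_1\xi$. In fact your write-up is more complete than the paper's, which only checks that the base map is determined by $f_1$ and leaves implicit the fullness direction you verify explicitly, namely the computation $q'f_1\xi q = q'f_1 p\lambda = q'p\,Tf_1\,\lambda = q'p\lambda'f_1 = q'\xi'q'f_1 = q'f_1$ showing that an arbitrary coalgebra morphism really does yield a commuting square.
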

\begin{proof}
    It suffices to show that given a morphism of differential bundles $(f_1, f_0)$, the morphism between base spaces is determined by $f_1$, which is immediate by the diagram:
    \[
        \begin{tikzcd}
            & E \rar{f_1} \dar{q} & E' \dar{q'} \\
            M \ar{ru}{\xi} \ar[equals]{r} & M \rar[swap]{f_0} & M'
        \end{tikzcd}
    \]
    thus $f_0 = q'f_1\xi$, and a linear bundle morphism is completely determined by a coalgebra morphism $f_1: \lambda \to \lambda'$.
\end{proof}

\subsection{Differential bundles}
\label{sub:alternative_differential_bundle}

In this section, we deconstruct the definition of a differential bundle to show that it is precisely a pre-differential bundle satisfying a universal property, expressed as a single $T$-pullback diagram.
We begin by considering Rosick\'{y}'s original universality diagram for the vertical lift on the tangent bundle \cite{MR800500}, and show that any pre-differential bundle satisfying this diagram has an induced addition map that satisfies the same coherences with the lift that $+$ and $\ell$ satisfy on the tangent bundle. 

\begin{lemma}\label{lem:prediff-addition}
Let $(q:E \to M,\xi, \lambda)$ be a pre-differential bundle in a tangent category such that:
\begin{itemize}
  \item $n$-fold pullback powers of $q$ exist and are $T$-limits,
  \item (Rosick\'{y}'s universality diagram) the commuting square
	  \begin{equation*}
	    \begin{tikzcd}
	      E \rar{\lambda} \dar{q} & TE \dar{(T(q), p)} \\
	      M \rar{(\xi, 0)} &[3ex] T(M) \times E
	    \end{tikzcd}
	  \end{equation*}
    is a $T$-pullback,
\end{itemize}
    then there is a uniquely determined addition $+_q: E_2 \to E$ making $(q,+_q,\xi)$ an additive bundle and $(\lambda,\xi): q \Rightarrow p$ and $(\lambda,0): q \Rightarrow T(q)$ additive bundle morphisms. 
    Furthermore, when the tangent category has negatives, the additive bundle will have negatives.
\end{lemma}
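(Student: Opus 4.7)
The strategy is to exploit Rosický's universal property, which identifies $E$ with the pullback $M\times_{T(M)\times E}TE$ and so characterizes maps $X\to E$ as compatible pairs $(f:X\to M,\,g:X\to TE)$ satisfying $(\xi,0)f = (T(q),p)g$. I will define $+_q$ by exhibiting an appropriate cone and then reduce every subsequent verification (monoid axioms, the two morphism conditions, negation) to uniqueness of this factorization, transporting identities from the tangent-bundle addition $+_p$.

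\emph{Construction.} Since $p\lambda = \xi q$, the composites $\lambda\pi_0,\lambda\pi_1:E_2\to TE$ share a common $p$-fibre, so set $\mu := \lambda\pi_0 +_p \lambda\pi_1$. Naturality of $p,0,+$ makes $Tq$ an additive bundle morphism $p_E\to p_M$, so $Tq\circ\mu = 0\circ q\pi_0$; and $p\circ\mu = \xi\circ q\pi_0$ by the unit law for $+_p$. Hence $(q\pi_0,\mu)$ is a compatible cone, inducing a unique $+_q:E_2\to E$ with $q\circ +_q = q\pi_0$ and $\lambda\circ +_q = \mu$. Uniqueness of the entire additive bundle structure follows from monicity of $\lambda$ (the bottom leg $(\xi,0)$ is monic because $p\circ 0=\mathrm{id}$): any addition making $(\lambda,\xi):q\Rightarrow p$ a morphism must satisfy $\lambda\circ +_q = \mu$, and is therefore forced.

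\emph{Monoid axioms and first morphism.} Two maps $E_n\to E$ coincide iff their $q$- and $\lambda$-legs agree; the $q$-legs always reduce to $q\pi_0$, so associativity, commutativity, and left/right unitality for $+_q$ amount to identities between $\lambda$-legs in $TE$. These unpack into combinations of $+_p$ and $\lambda$, and then follow from the corresponding laws for $+_p$ together with $\lambda\xi = 0\xi$. The morphism $(\lambda,\xi):q\Rightarrow p$ is then immediate: addition-preservation is $\lambda\circ +_q = \mu$ by construction, zero-preservation is $\lambda\xi = 0\xi$.

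\emph{Second morphism, negatives, and the main obstacle.} The addition $+_{Tq}$ on $T(q)$ is $T(+_q)$ transported through the $T$-limit iso $T(E_2)\cong TE\ts{Tq}{Tq}TE$. Making $(\lambda,0):q\Rightarrow T(q)$ an additive bundle morphism reduces to the single identity $\mu = \lambda\pi_0 +_{Tq} \lambda\pi_1$, asserting that the two additive structures on $TE$ -- over $p$ and over $Tq$ -- coincide on vectors vertical at the zero section. This is the \emph{main obstacle}; the natural tools are the coalgebra axiom $T\lambda\circ\lambda = \ell\lambda$ together with the tangent axioms making $(\ell,0):(TE,p)\to(T^2E,Tp)$ and $(c,\mathrm{id}):(T^2E,Tp)\to(T^2E,p)$ additive bundle morphisms, which encode the double-vector-bundle symmetry of $TE$ collapsing the two additions on such vectors. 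Zero-preservation for this morphism is $\lambda\xi = 0\xi = T\xi\circ 0$, by naturality of $0$. Finally, given negatives, $-\lambda(e)$ lies in the Rosický pullback ($p(-\lambda)=\xi q$ and $Tq(-\lambda)= -0q = 0q$), so factors as $\lambda\circ n_q$ for a unique $n_q:E\to E$ with $qn_q=q$; then $\lambda\circ +_q(\mathrm{id},n_q) = \lambda +_p (-\lambda) = 0\xi q = \lambda\xi q$ and $q\circ +_q(\mathrm{id},n_q) = q = q\xi q$ force $+_q(\mathrm{id},n_q) = \xi q$ by uniqueness. With the compatibility $\mu = \lambda\pi_0 +_{Tq} \lambda\pi_1$ in hand, everything else is a routine pullback-uniqueness check.
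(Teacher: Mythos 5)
Your proposal is correct and follows essentially the same route as the paper: define $+_q$ by factoring $+_p(\lambda\times\lambda)$ through the Rosick\'{y} pullback, transfer the monoid laws and morphism conditions along the monomorphism $\lambda$ (resp.\ $T(\lambda)$), and induce negation from $-_p$ in the same way. The ``main obstacle'' you flag, $\lambda\pi_0 +_p \lambda\pi_1 = \lambda\pi_0 +_{Tq} \lambda\pi_1$, is resolved in the paper by exactly the tools you name: postcompose with the mono $T(\lambda)$ and use the coalgebra axiom $T(\lambda)\lambda=\ell\lambda$ together with additivity of $(\ell,0)$ (the involution $c$ turns out not to be needed).
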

\begin{proof}
  First, note that $T^n(\lambda)$ is a monomorphism because it is the pullback of a monomorphism.
  The addition $+_q$ is defined using the following commutative diagram:
  \begin{equation}\begin{tikzcd}\label{eq:addition-via-universality}
      E_2 \arrow{ddd}[swap]{q\pi_0=q\pi_1} \drar[dashed]{+_q}
      \arrow{rrr}{\lambda\times \lambda} & &
      & T_2(E) \dlar[swap]{+_p} \arrow{ddd}{(T(q)\times T(q), p\pi_0)}\\
      & E \rar{\lambda} \dar[swap]{q} & T(E) \dar{(T(q), p)} & \\
      & M \rar[swap]{(0, \xi)} & T(M)\times E & \\
      M \urar{id} \arrow{rrr}[swap]{((0, 0), \xi)} & & & T_2(M)\times E \ular[swap]{+_p \times id}
    \end{tikzcd}\end{equation}
  and so in particular $+_q$ is the unique addition on $q$ such that $\lambda +_q = +_p(\lambda\times \lambda)$.
  The left-hand square gives the identity $q(a+_q b)=qa =qb$. Post-composition by $\lambda$ gives the associativity, commutativity and unit laws.

  Now that we have constructed an additive bundle structure, we must show that the bundle morphisms $(\lambda,\xi)$ and $(\lambda, 0)$ are additive. 
  We observe that $\lambda (a +_q b) = \lambda a +_p \lambda b$ by construction and $p\lambda = \xi q$ because the middle square commutes.
  Thus $(\lambda, \xi): (E,q) \to (TE,p)$ is a morphism of additive bundles.
  To show that $(\lambda, 0): (E,q) \to (TE, T(q))$ is additive first observe that $T(q)\lambda = 0q$ because the middle square commutes.
  To show $\lambda$ preserves addition, compute:
  \begin{align*}
    T(\lambda)(\lambda a +_{Tq} \lambda b)
    &= T(\lambda)\lambda a +_{Tp} T(\lambda)\lambda b \\
    &= l\lambda a +_{Tp} l\lambda b \\
    &= l(\lambda a +_p \lambda b) \\
    &= l\lambda (a +_q b)\\
    &= T(\lambda)\lambda (a +_q b)
  \end{align*}
  where $a,b\in E$ such that $qa=qb$.

  In case the tangent category has negatives, we may induce the map $-_q$ from $-_{p}$ via the diagram:
  \[
   \begin{tikzcd}
       E \ar{rrr}{\lambda} \ar{ddd}[swap]{q} \ar[dashed]{rd}{-_q}
     & & & TE \ar{ddd}{(Tq,p)} \ar{ld}[swap]{-_{PE}}\\
     & E \rar{\lambda} \dar[swap]{q} & TE \dar{(Tq,p)}\\
     & M \rar[swap]{(0,\xi)} & TM \times E \\
       M\ar{rrr}[swap]{(0,\xi)} \ar{ru}{-_{PE}} & & & TM \times E \ar{lu}[swap]{-_{PM},id}
   \end{tikzcd}
  \] 
  and postcomposition with $\lambda$ shows the necessary equations hold. 
\end{proof}

Thus, we can see that Rosick\'{y}'s universality diagram uniquely determines the additive bundle structure in a differential bundle.  
We now give a proof that a morphism of pre-differential bundles preserves addition.
\begin{proposition}\label{prop:linear-is-additive}
    Let $(q:E \to M, \xi, \lambda), (q',\xi',\lambda')$ be a pair of differential bundles satisfying Rosick\'{y}'s universality diagram.
    Then any coalgebra morphism $f: \lambda \to \lambda'$ gives rise to an additive bundle morphism $(f, q'f\xi) = (f,f')$.
\end{proposition}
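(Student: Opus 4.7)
The plan is to verify the two defining conditions of an additive bundle morphism for the pair $(f, f_0)$ with $f_0 := q' f \xi$: first that $f\xi = \xi' f_0$, and second that $f$ commutes with the fibrewise addition. A crucial structural fact recorded inside the proof of \cref{lem:prediff-addition} is that $\lambda'$ is a monomorphism (as the pullback of the monomorphism $(\xi', 0)$ in Rosick\'y's diagram), so that any identity in $E'$ can be verified after postcomposition with $\lambda'$.

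For the zero, I would perform a short chain of substitutions. Starting from $\xi' q' f \xi$, I would successively apply (i) the pre-differential identity $\xi' q' = p' \lambda'$ for the target bundle, (ii) the coalgebra morphism equation $\lambda' f = T(f) \lambda$, (iii) naturality of $p: T \Rightarrow id$, (iv) $p\lambda = \xi q$ for the source bundle, and (v) $q\xi = id$, producing $f \xi$. This gives $\xi' f_0 = f \xi$ as required. For the addition I would postcompose the candidate equation $f(a +_q b) = f(a) +_{q'} f(b)$ with $\lambda'$. Using the defining property of $+_q$ from \cref{lem:prediff-addition}, namely $\lambda \circ +_q = +_p \circ (\lambda \times \lambda)$, followed by the coalgebra equation and naturality of $+: T \ts{p}{p} T \Rightarrow T$, both sides collapse to $+_p \circ (T(f) \times T(f)) \circ (\lambda \times \lambda)$. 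Cancelling the mono $\lambda'$ then yields the desired identity.

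There is no serious obstacle here; the proof is entirely a diagram chase assembled from naturality and the pre-differential axioms, together with the observation that $\lambda'$ is monic. The only points requiring care are bookkeeping: keeping the two pre-differential bundles' equations straight (since both are used), and making sure that the naturality of $+$ is being applied to the fibrewise addition on the tangent bundle rather than to $+_{q}$ or $+_{q'}$. Once the monomorphism property of $\lambda'$ is invoked, both verifications reduce to one-line calculations.
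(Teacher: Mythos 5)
Your argument is correct and matches the paper's proof in essence: both reduce the addition-preservation equation to a postcomposition computation using the defining identity $\lambda' (\pi_0 +_{q'} \pi_1) = \lambda'\pi_0 +_p \lambda'\pi_1$ from \cref{lem:prediff-addition}, the coalgebra equation $\lambda' f = T(f)\lambda$, and naturality of $+$. The only cosmetic differences are that the paper cancels the jointly monic pair $(q',\lambda')$ by checking postcomposition with each of $q'$ and $\lambda'$, where you cancel the mono $\lambda'$ alone (legitimate, since $\lambda'$ is a pullback of the split mono $(\xi',0)$), and that you additionally verify the zero condition $f\xi = \xi' f_0$, which the paper leaves implicit.
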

\begin{proof}
    Note that $q',\lambda'$ are jointly monic, then check post-composition for $q'$
    \begin{gather*}
    	q'(f\pi_0 +_{q'} f\pi_1) = q'f\pi_0 = f'q\pi_0 = f'q(\pi_0 +_q \pi_1) = q'f(\pi_0 +_q \pi_1)
    	\end{gather*}
    Now post-composition by $\lambda'$
    \begin{gather*}
    	\lambda'(f \pi_0 +_{q'} f\pi_1) = \lambda'f\pi_0 +_{p} \lambda'f\pi_1 = T(f)\lambda\pi_0 +_p T(f)\lambda\pi_1 \\= T(f)(\lambda\pi_0 +_p \lambda\pi_1) = T(f)\lambda(\pi_0 +_p \pi_1) = \lambda'f(\pi_0 +_p \pi_1)
    \end{gather*}
    Therefore $(f,f')$ is an additive bundle morphism.
\end{proof}
We now give the original definition of a differential bundle. 
A differential bundle is an additive bundle with a lift satisfying the same coherences with addition and universality conditions as the universal lift $\ell$.
Based on \cref{lem:prediff-addition}, we shall show that the universality conditions on $\ell$ induce an addition map. 
Thus, a differential bundle is a pre-diffential bundle satisfying some additional properties rather than having additional structure.
\begin{definition}
  A \emph{differential bundle} in a tangent category consists of arrows $q: E
  \rightarrow M$, $+_q:E_2 \rightarrow E$, $\xi:M \rightarrow E$ and $\lambda:E
  \rightarrow T(E)$ such that:
  \begin{itemize}
  \item $(q,\xi,\lambda)$ is a pre-differential bundle,
  \item $n$-fold pullbacks of $q$ exist and are $T$-limits,
  \item $(\lambda, 0):(E, q) \rightarrow(T(E), T(q))$ is an additive bundle
    morphism,
  \item $(\lambda, \xi):(E, q) \rightarrow (T(E), p)$ is an additive bundle
    morphism,
  \item (Cockett-Cruttwell universality) the square
    \begin{equation*}\begin{tikzcd}
        E_2 \rar{\lambda\pi_0 +_{T(q)}0\pi_1} \dar[swap]{q\pi_0=q\pi_1} &[7ex] T(E) \dar{T(q)}\\
        M \rar{0} & T(M)
      \end{tikzcd}\end{equation*}
  is a $T$-pullback.
  \end{itemize}
\end{definition}

The following proposition presents our new characterisations of differential bundles.
The first is essentially the same as the original definition; the only difference is that it uses Rosick\'{y}'s universality diagram to induce an additive bundle structure. 
The second uses a rather opaque pullback diagram, that is then related to Rosick\'{y}'s and the original Cockett-Cruttwell universality condition.
\begin{proposition}
 \label{prop:new-def-diff-bundle-arbitrary}
     The following are equivalent
     \begin{enumerate}
         \item $(q, \xi, \lambda)$ is a differential bundle.
         \item  $(q, \xi, \lambda)$ is a pre-differential bundle, all $T$-pullback powers of $q$ exist and the diagrams:
                     \begin{equation}
                             \label{eq:ros-universal}
                             \begin{tikzcd}
                              E \rar{\lambda} \dar{q} & TE \dar{(Tq,p)} \\
                              M \rar{(0,\xi)} & TM \times E
                             \end{tikzcd} 
                     \end{equation}
                     \begin{equation}
                         \label{eq:cockett-universal}
                         \begin{tikzcd}
                              E_2 \rar{\lambda \pi_0 +_{Tq} 0\pi_1} \dar[][description]{q\pi_0=q\pi_1} 
                              &[5em] TE \dar{Tq} \\
                              M \rar{0} & TM 
                         \end{tikzcd}
                     \end{equation}
                   are $T$-pullbacks (where the additive bundle structure on $T(q)$ is induced by \cref{lem:prediff-addition}).
         \item $(q, \xi, \lambda)$ is a pre-differential bundle, all $T$-pullback powers of $q$ exist and the diagram:
                   \begin{equation}\label{eq:new-universality}\begin{tikzcd}[column sep=13ex]
                     E_2 \rar{T(\lambda)\lambda\pi_0 +_{T(p)}T(\lambda)0\pi_1}
                     \dar[][description]{q\pi_0=q\pi_1} & T^2(E) \dar{(T^2(q), T(p))}\\
                     M \rar{(00, 0\xi)} & T^2(M)\times T(E)
                   \end{tikzcd}\end{equation}
                   is a $T$-pullback.
     \end{enumerate}
 \end{proposition}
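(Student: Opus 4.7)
My plan is to treat (2) as a hub and establish both $(1)\Leftrightarrow(2)$ and $(2)\Leftrightarrow(3)$. The direction $(2)\Rightarrow(1)$ is essentially \cref{lem:prediff-addition}, which manufactures the addition $+_q$ and the two additive bundle morphism conditions from Rosick\'y's universality; together with the Cockett-Cruttwell universality assumed in (2) this is exactly (1). For $(1)\Rightarrow(2)$ I must produce Rosick\'y's pullback. Given a test cone $(u:X\to M,\ v:X\to TE)$ with $Tq\cdot v = 0u$ and $pv=\xi u$, the Cockett-Cruttwell pullback supplies a unique $(h_0,h_1):X\to E_2$ with $qh_0=qh_1=u$ and $\lambda h_0 +_{Tq} 0h_1 = v$. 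Postcomposing with $p$, which is additive over $+_{Tq}$ by naturality of $p:T\Rightarrow \mathrm{id}$ at $+_q$, yields $pv=\xi qh_0 +_q h_1 = h_1$, forcing $h_1=\xi u$ and hence $v=\lambda h_0$; uniqueness follows by reversing the construction through the embedding $e\mapsto (e,\xi qe):E\to E_2$.

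For $(2)\Rightarrow(3)$ I paste the Cockett-Cruttwell pullback on the left with the $T$-image of Rosick\'y's pullback on the right, obtaining an outer square over $T^2M\times TE$. The bottom composite collapses to $(T0,T\xi)\cdot 0 = (00,0\xi)$ by naturality of $0$, matching the bottom map of (3). The top composite is $T\lambda\cdot (\lambda\pi_0 +_{Tq} 0\pi_1)$, which I identify with $\ell\lambda\pi_0 +_{Tp} 0\lambda\pi_1$ as follows: because $(\lambda,\xi):(E,q)\to(TE,p)$ is an additive bundle morphism (by \cref{lem:prediff-addition}), taking $T$ makes $T\lambda$ an additive bundle morphism $(TE,Tq)\to(T^2E,Tp)$, after which the coalgebra axiom $T\lambda\cdot \lambda=\ell\lambda$ and the naturality identity $T\lambda\cdot 0=0\lambda$ finish the computation. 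The pullback pasting lemma then exhibits (3) as a $T$-pullback.

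For $(3)\Rightarrow(2)$ I recover each universality separately via cone extractions. For Rosick\'y, given a cone $(u,v)$ as above, lift it to the $(3)$-cone $(u,0v)$ (the commutativity check reduces to naturality of $0$); the resulting $H:X\to E_2$ satisfies $\ell\lambda\pi_0 H +_{Tp} 0\lambda\pi_1 H = 0v$. Applying $pT$, which is additive over $+_{Tp}$ by naturality of $p$ at $+_p$ and which satisfies $pT\cdot 0=\mathrm{id}$ and $pT\cdot \ell=0p$, extracts $\lambda\pi_1 H=v$, so $h:=\pi_1 H$ is the Rosick\'y lift; uniqueness follows by pushing any candidate through the embedding $e\mapsto (\xi qe, e):E\to E_2$ and invoking $(3)$'s uniqueness. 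For Cockett-Cruttwell, given $(u,v)$ with $Tqv=0u$, the cone $(u,T\lambda v)$ satisfies the (3)-compatibility by direct computation, producing $h\in E_2$ with $T\lambda\cdot mh = T\lambda v$; since $\lambda$ is split mono (from the now-established Rosick\'y pullback) so is $T\lambda$, forcing $mh=v$. The main obstacle is the identification $T\lambda\cdot m = \ell\lambda\pi_0 +_{Tp} 0\lambda\pi_1$ used in the pasting step: it requires $T\lambda$ to convert $+_{Tq}$-sums into $+_{Tp}$-sums, an additivity property that becomes available only once \cref{lem:prediff-addition} has supplied the additivity of $(\lambda,\xi)$.
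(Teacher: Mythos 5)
Your proposal is correct and follows the paper's decomposition for $(1)\Leftrightarrow(2)$ and $(2)\Rightarrow(3)$: Rosick\'{y}'s square is extracted from Cockett--Cruttwell universality by postcomposing the factorisation with $p$ to pin down the second component, and \cref{eq:new-universality} is obtained by pasting \cref{eq:cockett-universal} with $T$ applied to \cref{eq:ros-universal}, using the additivity of $(\lambda,\xi)$ to rewrite the top composite. Where you genuinely diverge is $(3)\Rightarrow(2)$: the paper first exhibits \cref{eq:ros-universal} as a retract of \cref{eq:new-universality} (section $0$, retraction $p$, with $\pi_1$ and $(\xi q,\mathrm{id})$ on the domain side) and invokes closure of pullbacks under retracts, then recovers Cockett--Cruttwell by the pullback pasting lemma applied to \cref{eq:pasting-for-universality}; you instead chase cones directly, lifting a Rosick\'{y} cone through $0$ and extracting with $p_{TE}$, and lifting a Cockett--Cruttwell cone through $T\lambda$ and cancelling the monomorphism. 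Your Rosick\'{y} extraction is essentially the paper's retract argument unpacked, while your Cockett--Cruttwell step trades the pasting lemma for a mono-cancellation; both work, but the paper's packaging gives the $T^n$-stability for free (since $T^n$ preserves retract diagrams), whereas your cone arguments must be understood as running uniformly at every level $T^n$ -- they do, but you should say so. Two small repairs: $\lambda$ need not be \emph{split} mono in a general tangent category; what you actually need is that $T^n\lambda$ is mono, which follows because $T^n$ of Rosick\'{y}'s square is a pullback of the mono $(T^n\xi, T^n 0)$. And the identity $p_{TE}\ell = 0p$ you use is not a listed axiom but follows from $c\ell=\ell$ together with $T(p)\ell = 0p$. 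Neither affects the validity of the argument.
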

 \begin{proof}
    We prove the chain of equivalences holds.      
    \begin{itemize}
        \item[$(1 \Rightarrow 2)$]
              We need only show that \cref{eq:ros-universal} is a $T$-pullback.
            If $a:A \rightarrow T^{n+1}(E)$ satisfies $T^{n+1}(q)a = T^n(0qp)a$
            and $T^n(p)a = T^n(\xi q p)a$ then by the universality of the lift there exists $(u, v):A \rightarrow
            T^n(E_2)$ such that
            \[T^n(\lambda)u +_{T^{n+1}(q)}T^n(0)v = a\] and $T^n(q)u = T^n(q)v =
            T^n(qp)a$. However by postcomposing the displayed equation with $T^n(p)$ we
            obtain that actually $v = T^n(p)a = T^n(\xi qp)a$ and so the displayed
            equation is equivalently $T^n(\lambda)u = a$. But this means that $u:A
            \rightarrow T^n(E)$ is the factorisation we require.
                Moreover $T^n(\lambda)$ is a monomorphism because $T^n(\mu)$ is a monomorphism and so the factorisation is unique. 
           \item[$(2 \Rightarrow 1)$] By \cref{lem:prediff-addition}, we have a uniquely determined additive bundle so that $(+_q,q,\xi)$, and $(\lambda,\xi): q \to p, (\lambda,0): q \to Tq$ are additive bundle morphisms. 
         The universality of the vertical lift holds for the induce addition by our assumption, thus $(\lambda, q, \xi)$ is a differential bundle.
        \item[$(3 \Rightarrow 2)$]
         We first exhibit Rosick\'{y}'s universality diagram \cref{eq:ros-universal} as a retract of the diagram \cref{eq:new-universality}:
       \begin{equation}\label{eq:old-lift-from-new}
         \begin{tikzcd}
           E_2 \ar{rrr}{\ell\lambda\pi_0+_{T(p)}0\lambda\pi_1}  \ar{ddd}[swap]{q\pi_0=q\pi_1} \drar{\pi_1}
           & & & T^2(E) \dlar[swap]{p} \arrow{ddd}{(T^2(q), T(p))}\\
           & E  \dar{q} \rar{\lambda} & T(E) \dar{(T(q), p)} & \\
           & M \rar{(0, \xi)} & T(M)\times E  & \\
           M \arrow{rrr}{(00, 0\xi)} \urar{id} & & & T^2(M)\times T(E)\ular[swap]{p}
         \end{tikzcd}
       \end{equation}
       Pullbacks are closed to retracts, thus Rosick\'{y}'s universality condition holds (\cref{eq:ros-universal} is a pullback)
       and we may induce an additive bundle structure as in \cref{lem:prediff-addition}.
       
       Now observe that in the diagram
       \begin{equation}\label{eq:pasting-for-universality}
       \begin{tikzcd}
         E_2 \rar{\lambda\pi_0 +_{Tq} 0\pi_1} \dar{q\pi_i}
         &[4ex] TE \rar{T\lambda} \dar{Tq} & T^2(E) \dar{(T^2q, Tp)}\\
           M \rar{0} & TM \rar{(T0,T\xi)} & T^2M \times TE
       \end{tikzcd}
       \end{equation}
       the outer perimeter is \cref{eq:new-universality} and the right square is a $T$-pullback,
       so by the pullback lemma the left square is a $T$-limit.
        
        \item[$(2 \Rightarrow 3)$] This also follows from the pullback lemma - as \cref{eq:cockett-universal} and \cref{eq:ros-universal}
       are $T$-pullbacks the composite \cref{eq:pasting-for-universality} is a $T$-pullback. 
    \end{itemize}
\end{proof}

When a tangent category has negatives, a pre-differential bundle that satisfies Rosick\'{y}'s universality diagram is a differential bundle.
\begin{corollary}\label{cor:new-def-diff-bundle-with-negatives}
  In a tangent category with negatives, a pre-differential bundle $(q: E \to M, \xi, \lambda)$ is a differential bundle if and only if
  $n$-fold $T$-pullback powers of $q$ exist and the diagram
          \begin{equation*}
            \begin{tikzcd}
              E \rar{\lambda} \dar{q} & T(E) \dar{(T(q), p)} \\
              M \rar{(\xi, 0)} &[3ex] T(M) \times E
            \end{tikzcd}
          \end{equation*}
   is a $T$-pullback.
\end{corollary}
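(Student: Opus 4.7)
The plan is to reduce the nontrivial direction to the equivalence (2~$\Leftrightarrow$~1) in \cref{prop:new-def-diff-bundle-arbitrary}. The forward implication is immediate from (1~$\Rightarrow$~2), so I would focus on the converse: assuming Rosick\'{y}'s universality \eqref{eq:ros-universal} is a $T$-pullback and that the ambient tangent category has negatives, I need to derive the Cockett-Cruttwell $T$-pullback \eqref{eq:cockett-universal}. By \cref{lem:prediff-addition}, Rosick\'{y}'s universality already induces an addition $+_q$ making $(q, +_q, \xi)$ an abelian group bundle and makes $(\lambda, 0)$ and $(\lambda, \xi)$ additive bundle morphisms, so only the Cockett-Cruttwell pullback remains to be verified.

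I would construct a direct inverse to the comparison map $\Phi(a,b) = \lambda\pi_0 +_{Tq} 0\pi_1: E_2 \to TE$. Given a generalised element $u: X \to TE$ and $m: X \to M$ with $Tq \circ u = 0 \circ m$, I would set $b := pu$ (so that $qb = m$) and, using negatives in the $Tq$-additive bundle, define
\[
w := u -_{Tq} 0 b.
\]
A short calculation, using naturality of $p$ to conclude that $(p_E, p_M)$ is a morphism of abelian group bundles from $(Tq, T\xi, T(+_q))$ to $(q, \xi, +_q)$, shows $Tq(w) = 0m$ and $p(w) = pu -_q b = \xi m$. Rosick\'{y}'s universal property then produces a unique $a: X \to E$ with $qa = m$ and $\lambda a = w$, and the pair $(a, b): X \to E_2$ is the desired factorisation since $\lambda a +_{Tq} 0 b = w +_{Tq} 0 b = u$. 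Uniqueness is automatic: post-composing $\Phi(a',b') = u$ by $p$ forces $b' = pu$, after which $\lambda a'$ is determined and $\lambda$ is a monomorphism as a pullback of the split monomorphism $(0, \xi)$.

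For $T$-stability of the resulting pullback, I would note that the entire construction is natural and built from $T$-stable data: $T^n$ applied to Rosick\'{y}'s diagram is again a pullback, and $T^n$ preserves the addition, negation, and morphisms $p$, $\lambda$ used above, so the same argument runs verbatim after $T^n$. With Cockett-Cruttwell universality in hand, \cref{prop:new-def-diff-bundle-arbitrary}~(2~$\Rightarrow$~1) closes the proof.

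The main obstacle, as I see it, is bookkeeping around the two different additive bundle structures on $TE$ --- the ``$p$-bundle'' $(TE, p, 0, +_p)$ and the ``$Tq$-bundle'' $(TE, Tq, T\xi, T(+_q))$ --- and checking that subtraction in the latter is carried by $p$ to subtraction in $(E, q)$. Once that morphism-of-abelian-group-bundles fact is nailed down, the inverse $\Psi(u) = (\lambda^{-1}(u -_{Tq} 0\,pu),\, pu)$ essentially writes itself.
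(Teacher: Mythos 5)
Your proposal is correct and follows essentially the same route as the paper: given $u$ with $Tq\,u = 0m$, subtract $0\,pu$ in the $Tq$-additive structure, check via the additive bundle morphism $(p_E,p_M):(Tq,T\xi,T(+_q))\Rightarrow(q,\xi,+_q)$ that the difference lands in the Rosick\'{y} pullback, and reassemble the factorisation $(a,pu)$ through $\lambda\pi_0+_{Tq}0\pi_1$. The paper carries out the identical computation directly at the level of $T^n$-generalised elements to get $T$-stability, whereas you appeal to naturality of the construction; both are fine.
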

\begin{proof}
    The reverse implication follows by \cref{prop:new-def-diff-bundle-arbitrary}, to prove the forwards implication
    it suffices to prove
    \[
    \begin{tikzcd}
         E_2 \rar{\lambda \pi_0 +_{Tq} 0\pi_1} \dar[][description]{q\pi_0=q\pi_1} 
         &[5em] TE \dar{Tq} \\
         M \rar{0} & TM 
    \end{tikzcd}
    \]
    is universal. Suppose that $a:A \rightarrow T^{n+1}(E)$ satisfies $T^{n+1}(q)a = T^n(0qp)a$.
  We need to show that there exists a unique factorisation of $a$ through $T^n(\mu):T^n(E_2) \rightarrow T^{n+1}(E)$.
  The difference $a-_{T^{n+1}(q)}T^n(0p)a:A \rightarrow T^{n+1}(E)$ satisfies:
  \begin{itemize}
  \item $T^{n+1}(q)(a-_{T^{n+1}(q)}T^n(0p)a)=T^{n+1}(q)a = T^n(0qp)a$ and
  \item $T^n(p)(a-_{T^{n+1}(q)}T^n(0p)a)=T^n(p)a-_{T^n(q)}T^n(p)a=T^n(\xi qpa)a$
  \end{itemize}
  so there exists an $e:A \rightarrow T^n(E)$ such that $T^n(\lambda)e =
  a-_{T(q)}T^n(0p)a$ and $T^n(q)e=T^n(qp)a$. Now $(e, T^n(p)a):A \rightarrow
  T^n(E_2)$ is the factorisation we require:
  \begin{align*}
  T^n(\mu)(e, T^n(p)a) &=T^n(\lambda\pi_0+_{T(q)}0\pi_1)(e, T^n(p)a)\\
  &=T^n(\lambda)e+_{T^{n+1}(q)}T^n(0p)a\\
                                                &=(a-_{T^{n+1}(q)}T^n(0p)a)+_{T^{n+1}(q)}T^n(0p)a = a
  \end{align*}
  and this factorisation is unique because $T^n(\lambda)$ and $T^n(0)$ are monomorphisms.
\end{proof}
Thus, differential bundles are pre-differential bundles satisfying a universality condition, so they are a full-subcategory of pre-differential bundles. 
\begin{definition}\label{def:cat-of-differential-bundles}
	The category of differential bundles is the full subcategory of the category of pre-differential bundles satisfying the Rosick\'{y} and Cockett-Cruttwell universality conditions.
\end{definition}
\begin{lemma}
	Every morphism of differential bundles preserves addition.
\end{lemma}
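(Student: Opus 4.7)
The plan is to observe that this lemma is essentially an immediate consequence of \cref{prop:linear-is-additive}, combined with the characterisation of differential bundles established in \cref{prop:new-def-diff-bundle-arbitrary}. The hard work has already been done; what remains is simply to chain together the existing results.

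First I would unpack what a morphism of differential bundles is under \cref{def:cat-of-differential-bundles}: since differential bundles form the \emph{full} subcategory of pre-differential bundles satisfying the Rosick\'{y} and Cockett-Cruttwell universality conditions, a morphism $f: (q: E \to M, \xi, \lambda) \to (q': E' \to M', \xi', \lambda')$ of differential bundles is the same data as a morphism of the underlying pre-differential bundles, equivalently a coalgebra morphism $f: \lambda \to \lambda'$ for the weak comonad $(T,\ell)$.

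Next I would invoke \cref{prop:new-def-diff-bundle-arbitrary} to note that both $(q,\xi,\lambda)$ and $(q', \xi', \lambda')$, being differential bundles, satisfy Rosick\'{y}'s universality diagram \cref{eq:ros-universal}. This means the hypotheses of \cref{prop:linear-is-additive} are met by the two differential bundles and the coalgebra morphism $f$.

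Finally I would apply \cref{prop:linear-is-additive} directly: it yields that $(f, q'f\xi)$ is an additive bundle morphism, which is exactly the statement that $f$ preserves the additive bundle structure (addition and zero) on top of preserving the projection and the lift. Since the unique addition $+_q$ (resp.\ $+_{q'}$) induced by \cref{lem:prediff-addition} from Rosick\'{y}'s diagram agrees with the addition coming from the original differential bundle definition — this is exactly the content of the equivalence $(1)\Leftrightarrow(2)$ in \cref{prop:new-def-diff-bundle-arbitrary} — there is no ambiguity in what ``preserves addition'' means. There is no real obstacle here; the only thing to be careful about is confirming that the addition extracted via \cref{lem:prediff-addition} in the proof of \cref{prop:linear-is-additive} is the same addition as that carried by the differential bundle, which is precisely ensured by the uniqueness clause of \cref{lem:prediff-addition}.
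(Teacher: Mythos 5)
Your proposal is correct and follows essentially the same route as the paper, which likewise reduces the lemma to \cref{prop:linear-is-additive} via the observation that every differential bundle is a pre-differential bundle satisfying Rosick\'{y}'s universality diagram. Your extra care in noting that the addition induced by \cref{lem:prediff-addition} coincides (by uniqueness) with the addition carried by the differential bundle is a point the paper leaves implicit, but it is the same argument.
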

\begin{proof}
	Every differential bundle is a pre-differential bundle that satisfies Rosick\'{y}'s universality diagram, so this follows immediately from \cref{prop:linear-is-additive}.
\end{proof}

\section{Relating differential bundles and tangent projections}\label{sec:relating-bundles-and-tangent-projections}

In this section, we prove the general results from which we deduce that every differential bundle in the category of smooth manifolds is a vector bundle.
Our general strategy is to demonstrate that a differential bundle $(q: E \to M, \xi, \lambda)$ is a linear retract of a pullback of the tangent bundle on $E$.
In fact we prove this twice using two different sets of assumptions.
On the one hand in \cref{sub:universal_retraction} we work in a tangent category with negatives in which the pullback of the differential bundle $p:T(E) \rightarrow E$ along $\xi$ exists. 
On the other hand in \cref{sub:differential-bundles-as-retracts} we work in a general tangent category and assume that $(q, \xi, \lambda)$ is a \emph{strong differential bundle} (defined in \Cref{def:strong-differential-bundle}) of which all differential bundles in a tangent category with negatives are examples.
Then in \Cref{sub:differential-bundles-retractive-display} we use retractive display systems (see \cref{sub:retractive-display}) to characterise differential bundles as pre-differential bundles satisfying a single pullback diagram.

\subsection{Differential bundles as retracts}
\label{sub:universal_retraction}

In this section, we consider a fixed differential bundle $(q: E \rightarrow M, \xi, \lambda)$ in a tangent category $\mathbb{X}$ with negatives.
Furthermore we assume that the following pullback
\[\begin{tikzcd}
  T_M(E) \rar{\iota_M} \dar[swap]{\pi_M} & T(E) \dar{p}\\
  M \rar[swap]{\xi} & E
\end{tikzcd}\]
exists in $\mathbb{X}$, $\pi_M$ is a differential bundle and $(\iota_M, \xi)$ is a linear morphism of differential bundles.
Under the above assumptions we prove that $q$ is a retract of the pullback $\pi_M$:
\[\begin{tikzcd}
T_M(E) \rar[twoheadrightarrow]{K} \dar{\pi_M} & E \rar[rightarrowtail]{(\lambda, p)} \dar{q} & T_M(E) \dar{\pi_M}\\
M \ar[equals]{r} & M \ar[equals]{r} & M
\end{tikzcd}\]
where the map between the base spaces is the identity.
First, we describe the idempotent whose splitting defines this retract.

\begin{lemma}
  The arrow $\chi:T_M(E) \rightarrow T_M(E)$ uniquely determined by the equation
  \[\iota_M\chi = \iota_M-_p T(\xi q)\iota_M\]
  is idempotent and linear.
\end{lemma}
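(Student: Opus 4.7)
The plan is to verify, in order: (i) that the displayed equation defines a unique $\chi$; (ii) that $\chi^2 = \chi$; (iii) that $\chi$ is a coalgebra morphism for the lift $\lambda_{\pi_M}$ on $\pi_M$. Throughout I would exploit that $\iota_M$ is monic, being the pullback of $p$ along the split monomorphism $\xi$.

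For (i), the subtraction $\iota_M -_p T(\xi q)\iota_M$ makes sense because naturality of $p$ together with $q\xi = id$ gives $p T(\xi q)\iota_M = \xi q p \iota_M = \xi\pi_M = p\iota_M$. The resulting map again has $p$-image $\xi\pi_M$, so the pullback universal property of $T_M(E)$ produces a unique $\chi$ with $\iota_M\chi = \iota_M -_p T(\xi q)\iota_M$ (the forced base component is $\pi_M\chi = \pi_M$).

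For (ii), I would show $\iota_M\chi^2 = \iota_M\chi$ and then cancel the mono $\iota_M$. Since $(T(\xi q), \xi q)$ is an additive bundle morphism for $p$ it commutes with $-_p$, and since $(\xi q)^2 = \xi q$ we have $T(\xi q)T(\xi q) = T(\xi q)$, so
\[
T(\xi q)\iota_M\chi \;=\; T(\xi q)\iota_M -_p T(\xi q)T(\xi q)\iota_M \;=\; T(\xi q)\iota_M -_p T(\xi q)\iota_M \;=\; 0\xi\pi_M.
\]
Substituting this into $\iota_M\chi^2 = \iota_M\chi -_p T(\xi q)\iota_M\chi$ yields $\iota_M\chi^2 = \iota_M\chi$.

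For (iii), I would verify $T(\chi)\lambda_{\pi_M} = \lambda_{\pi_M}\chi$ by post-composing with $T(\iota_M)$, which is monic because it is the pullback of $T(p)$ along $T(\xi)$ and $T(\xi)$ is a split mono (retracted by $T(q)$). Using linearity of $(\iota_M, \xi)$ (so $T(\iota_M)\lambda_{\pi_M} = \ell\iota_M$), the fact that $T$ sends $-_p$-subtractions to $-_{T(p)}$-subtractions, and that $\ell$ is an additive bundle morphism $(TE,p)\to(T^2E,T(p))$, both sides collapse to
\[
\ell\iota_M -_{T(p)} T^2(\xi q)\,\ell\iota_M \;=\; \ell\iota_M -_{T(p)} \ell\, T(\xi q)\iota_M,
\]
where the equality is naturality of $\ell$ at $\xi q$. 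The main obstacle is the bookkeeping in this final step: keeping track of whether a given fibre subtraction lives in the $p$-fibres of $TE$ or the $T(p)$-fibres of $T^2E$, and applying the right additive bundle morphism ($T(\xi q)$ or $\ell$) to distribute across it. Once that is in place, the essential content of (ii) is just $(\xi q)^2 = \xi q$ and the essential content of (iii) is just the naturality square $T^2(\xi q)\ell = \ell T(\xi q)$.
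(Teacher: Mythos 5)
Your proposal is correct and follows essentially the same route as the paper: well-typedness of the subtraction via naturality of $p$ and $q\xi = id$, idempotence by cancelling the monomorphism $\iota_M$ using $T(\xi q)T(\xi q)=T(\xi q)$ together with additivity of $T(\xi q)$, and linearity by cancelling $T(\iota_M)$ using $T(\iota_M)\lambda_{\pi_M}=\ell\iota_M$, additivity of $(\ell,0)$, and naturality of $\ell$ at $\xi q$. The only differences are cosmetic (you isolate $T(\xi q)\iota_M\chi = 0\xi\pi_M$ as an intermediate step rather than substituting directly, and you misstate $T(\iota_M)$ as the pullback of $T(p)$ along $T(\xi)$ rather than of $T(\xi)$ along $T(p)$, a harmless slip).
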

\begin{proof}
  The sum on the right hand side is well-typed because
  \[
    pT(\xi q)\iota_M = \xi q p\iota_M = \xi q \xi \pi_M = \xi\pi_M = p\iota_M
  \]
  and the arrow $\chi$ is factors through $T_M(E)$ because $p\iota_M \chi = p\iota_M = \xi
  \pi_M$. To see that $\chi$ is idempotent:
  \begin{align*}
    \iota_M\chi\chi &= (\iota_M-_p T(\xi q)\iota_M)\chi\\
    &= \iota_M\chi-_p T(\xi q)\iota_M\chi\\
    &= (\iota_M-_p T(\xi q)\iota_M)-_p T(\xi q)(\iota_M -_p T(\xi q)\iota_M)\\
    &=(\iota_M-_p T(\xi q)\iota_M) = \iota_M\chi
  \end{align*}
  and so $\chi\chi = \chi$ because $\iota_M$ is a monomorphism. To see that
  $\chi$ is linear:
  \begin{align*}
    T(\iota_M)\lambda_M\chi &= \ell \iota_M\chi\\
        &= \ell(\iota_M-_p T(\xi q)\iota_M)\\
        &=\ell\iota_M -_{T(p)} T^2(\xi q)\ell\iota_M\\
        &= T(\iota_M)\lambda_M -_{T(p)} T^2(\xi q)T(\iota_M)\lambda_M\\
        &= (T(\iota_M)-_{T(p)} T^2(\xi q)T(\iota_M))\lambda_M = T(\iota_M)T(\chi) \lambda_M
  \end{align*}
  and so $\lambda_M \chi = T(\chi) \lambda_M$ because $T(\iota_M)$ is a monomorphism.
  Note that $T(\iota_M)\lambda_M = \ell\iota_M$ because $(\iota_M, \xi)$ is a linear morphism of differential bundles.
\end{proof}

Next, we show that the idempotent $\chi$ splits.

\begin{proposition}\label{prop:bundle-as-splitting}
  If $q:E \rightarrow M$ is a differential bundle in a tangent category with negatives and the pullback $T_M(E)$ exists then 
  \[\begin{tikzcd}
      E \rar[rightarrowtail]{(\lambda, q)} & T_M(E) \rar[yshift=1ex]{\chi}
      \rar[yshift=-1ex][swap]{id} & T_M(E)
    \end{tikzcd}\] is an equaliser that is preserved by any functor.
\end{proposition}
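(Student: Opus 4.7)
The plan is to exhibit $(\lambda, q)$ as a \emph{split} equaliser of $\chi$ and $id$ by constructing a retraction $K: T_M(E) \to E$ satisfying $K(\lambda, q) = id_E$ and $(\lambda, q) K = \chi$. Split idempotents are absolute, so once this data is produced the equaliser will automatically be preserved by any functor, which takes care of the second half of the statement.

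First I would verify $\chi (\lambda, q) = (\lambda, q)$. Composing with the monomorphism $\iota_M$ this amounts to showing $\lambda -_p T(\xi q)\lambda = \lambda$. From $T(q)\lambda = 0q$ (a consequence of $(\lambda,0)$ being an additive bundle morphism), naturality of $0$, and the pre-differential bundle axiom $\xi q = p\lambda$, I get $T(\xi q)\lambda = T(\xi)\cdot 0 q = 0\xi q = 0 p\lambda$, which is exactly the zero element of the bundle $p: TE \to E$ over $\lambda$; subtracting this zero returns $\lambda$, as required.

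Next I would build $K$ by applying Rosick\'{y}'s universality \cref{eq:ros-universal} to the pair $(\iota_M \chi,\, \pi_M): T_M(E) \to TE \times M$. The compatibility conditions reduce to short computations using that $p: TE \to E$ and $Tq: TE \to TM$ are additive bundle projections and therefore preserve subtraction in the bundle $p$. Specifically $p\iota_M\chi = p\iota_M = \xi\pi_M$, and $Tq \cdot \iota_M \chi = Tq\iota_M -_{Tq} T(q\xi q)\iota_M = 0 \pi_M$ (using $q\xi = id$ and the zero of the additive bundle $Tq$). Rosick\'{y}'s universality then supplies a unique $K$ with $\lambda K = \iota_M \chi$ and $qK = \pi_M$.

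The splitting equations now become formal. From $\iota_M(\lambda, q) K = \lambda K = \iota_M \chi$ I get $(\lambda, q) K = \chi$ by cancelling the monomorphism $\iota_M$; and applying Rosick\'{y} uniqueness to $\lambda K(\lambda,q) = \iota_M \chi (\lambda, q) = \lambda$ together with $qK(\lambda, q) = \pi_M (\lambda, q) = q$ gives $K(\lambda, q) = id_E$. Since $\chi$ splits as $(\lambda, q)K$, any $h$ with $\chi h = h$ factors uniquely through $(\lambda, q)$ as $Kh$, and this argument transports verbatim along any functor $F$, as the equations $FK \cdot F(\lambda, q) = id$ and $F(\lambda, q)\cdot FK = F\chi$ are preserved by functoriality. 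The main subtlety to watch is the pervasive appeal to negatives: subtraction in the bundle $p$, and the fact that $Tq$ preserves it, are used throughout, which is exactly why the standing assumption that $\mathbb{X}$ has negatives is essential.
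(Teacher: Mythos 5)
Your proof is correct and rests on the same key ingredient as the paper's: applying Rosick\'{y}'s universality (via \cref{cor:new-def-diff-bundle-with-negatives}) to a map into $T(E)$ satisfying $p\,a = \xi\pi_M$ and $T(q)\,a = 0\pi_M$. The only difference is organisational --- you construct the retraction $K$ up front and read off the (absolute) equaliser from the resulting split idempotent, whereas the paper verifies the equaliser's universal property for an arbitrary cone and only afterwards extracts $K$ in \cref{cor:splitting-of-differential-bundles}; both routes obtain preservation by any functor from the idempotency of $\chi$.
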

\begin{proof}
  Let $a:A \rightarrow T_M(E)$ such that $\chi a = a$. First we show that there exists $e:A \rightarrow E$ such that $(\lambda, q)e = a$. To this end observe
  that
  \[T(q)\iota_M a = T(q)\iota_M \chi a = T(q)(\iota_M-_p T(\xi q)\iota_M) a =
    0 q p\iota_M a\]
  and also $p\iota_M a = \xi q p \iota_M a$ by the definition of $T_M(E)$.
  Therefore by the universality of the vertical lift as described in \Cref{cor:new-def-diff-bundle-with-negatives} there exists a unique $e:A
  \rightarrow E$ such that $\iota_M a = \lambda e$. This is the arrow $e$ that
  we require:
  \[
    \iota_M(\lambda, q)e = \lambda e = \iota_M a
  \]
  and so $(\lambda, q)e = a$ because $\iota_M$ is a monomorphism. To check this
  is unique let $f:A \rightarrow E$ be an arrow such that $(\lambda, q)f = a$. Then
  \[
    \iota_M(\lambda, q)f = \iota_M a\implies \lambda f= \lambda e \implies f=e
  \]
  and so $(\lambda, q)$ is the equaliser of $\chi$ and $id$.
  Since $\chi$ is idempotent the equaliser $(\lambda, q)$ is preserved by any functor.
\end{proof}

\begin{corollary}\label{cor:splitting-of-differential-bundles}
  If $K$ is the factorisation of $\chi$ through $(\lambda, p)$ then
  \[\begin{tikzcd}
      T_M(E) \rar[twoheadrightarrow]{K} \dar{\pi_M} & E \dar{q} \rar[rightarrowtail]{(\lambda, q)} & T_M(E) \dar{\pi_M}\\
      M \ar[equals]{r} & M \ar[equals]{r} & M
    \end{tikzcd}\] is a retract over the fixed base space $M$.
\end{corollary}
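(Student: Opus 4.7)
The plan is to invoke \cref{prop:bundle-as-splitting} directly. That proposition exhibits $(\lambda, q): E \to T_M(E)$ as the equaliser of $\chi$ and $\mathrm{id}_{T_M(E)}$, and in particular as a splitting of the idempotent $\chi$. Since $\chi \chi = \chi = \mathrm{id} \circ \chi$, the map $\chi$ itself equalises $\chi$ and $\mathrm{id}$, so there exists a unique $K: T_M(E) \to E$ with $(\lambda, q) \circ K = \chi$. From $(\lambda, q) \circ K \circ (\lambda, q) = \chi \circ (\lambda, q) = (\lambda, q)$ together with $(\lambda, q)$ being monic, we deduce $K \circ (\lambda, q) = \mathrm{id}_E$, so $E$ sits as a retract of $T_M(E)$ in $\mathbb{X}$.

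It then remains to check that $K$ and $(\lambda, q)$ are both morphisms over $M$, i.e.\ that the two squares of the displayed diagram commute. The right square $\pi_M \circ (\lambda, q) = q$ is immediate from the defining pairing into the pullback $T_M(E)$, whose projection to $M$ is $\pi_M$. For the left square we need $q \circ K = \pi_M$; postcomposing $(\lambda, q) \circ K = \chi$ with $\pi_M$ and using $\pi_M \circ (\lambda, q) = q$ reduces this to the identity $\pi_M \chi = \pi_M$.

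The one substantive point, and the main (modest) obstacle, is showing $\pi_M \chi = \pi_M$. The idea is to apply $p$ to the defining equation $\iota_M \chi = \iota_M -_p T(\xi q)\iota_M$; since subtraction in the additive bundle $(TE, p)$ preserves the common base point (and one checks the well-typedness from $p \iota_M = \xi \pi_M = \xi q p \iota_M = p T(\xi q) \iota_M$), we obtain $p \iota_M \chi = p \iota_M$. The pullback square $p \iota_M = \xi \pi_M$ then rewrites both sides, yielding $\xi \pi_M \chi = \xi \pi_M$, and since $q \xi = \mathrm{id}_M$ makes $\xi$ a split monomorphism we cancel to get $\pi_M \chi = \pi_M$. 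Assembling these steps delivers the retract diagram over $M$.
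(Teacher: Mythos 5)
Your proof is correct and follows the same route the paper intends: the paper states this as an immediate consequence of \cref{prop:bundle-as-splitting} and gives no further argument, while you correctly supply the routine details (the splitting $K(\lambda,q)=\mathrm{id}$ from the equaliser property and monicity of $(\lambda,q)$, and the compatibility over $M$ via $\pi_M\chi=\pi_M$, which is already implicit in the lemma showing $\chi$ factors through $T_M(E)$).
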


\begin{corollary}\label{cor:negatives-pullback-retract}
	If $(q:E \to M, \xi, \lambda)$ is a differential bundle and the pullback differential bundle $\pi_M$ exists then $(q,\xi,\lambda)$ is a retract of a pullback of a tangent bundle.
\end{corollary}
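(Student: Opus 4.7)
The plan is that this corollary is essentially a repackaging of \cref{cor:splitting-of-differential-bundles}, so the proof should just assemble the pieces and observe they say what is claimed. First I would unpack the statement: the pullback differential bundle $\pi_M : T_M(E) \to M$ is by definition obtained as the pullback of the tangent projection $p:TE \to E$ along the zero section $\xi:M \to E$, which is exactly the sense in which $\pi_M$ is "a pullback of a tangent bundle" that the corollary refers to.

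Next I would invoke \cref{cor:splitting-of-differential-bundles} directly: it provides the factorisation $K : T_M(E) \to E$ of the idempotent $\chi$ through the monomorphism $(\lambda, q) : E \to T_M(E)$, and exhibits the diagram
\[\begin{tikzcd}
T_M(E) \rar[twoheadrightarrow]{K} \dar{\pi_M} & E \rar[rightarrowtail]{(\lambda, q)} \dar{q} & T_M(E) \dar{\pi_M}\\
M \ar[equals]{r} & M \ar[equals]{r} & M
\end{tikzcd}\]
as a retract in the arrow category of $\mathbb{X}$ (in fact over the identity on $M$). Reading this diagram off gives precisely the assertion that $(q, \xi, \lambda)$ is a retract of $\pi_M$, hence of a pullback of a tangent projection.

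There is essentially no obstacle here beyond the work already carried out: the construction of the idempotent $\chi$ (for which negatives are used to form $\iota_M -_p T(\xi q)\iota_M$) and the verification in \cref{prop:bundle-as-splitting} that $(\lambda, q)$ equalises $\chi$ with the identity are the only nontrivial steps, and both rely on the universality of the vertical lift in the form of \cref{cor:new-def-diff-bundle-with-negatives}. The corollary itself therefore requires only one additional sentence, namely the identification of $\pi_M$ as a pullback of the tangent projection $p:TE \to E$.
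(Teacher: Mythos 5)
Your proposal is correct and matches the paper's intent exactly: the corollary is an immediate repackaging of \cref{cor:splitting-of-differential-bundles}, with the only added observation being that $\pi_M$ is by construction the pullback of the tangent projection $p:TE \to E$ along $\xi$. The paper gives no separate proof for this corollary, and your one-sentence identification of $\pi_M$ as that pullback is precisely the missing remark.
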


\subsection{Strong differential bundles as retracts}
\label{sub:differential-bundles-as-retracts}

In this section, we re-interpret the results of \cref{sub:universal_retraction} in an arbitrary tangent category.
This process reveals a third universality condition satisfied by differential bundles in tangent categories with negatives, which states that the kernel of $p:TE \to E$ splits as a fibred biproduct $E \ts{q}{p} TM$. 
We call differential bundles satisfying this third universality condition strong, and show that in a tangent category with negatives any differential bundle $(q:E \to M, \xi, \lambda)$ is strong if the $T$-pullback $E \ts{q}{p} TM$ exists, from which \cref{cor:splitting-of-differential-bundles} follows.

\begin{definition}\label{def:strong-differential-bundle}
  A differential bundle $(q:E \rightarrow M, \xi, \lambda)$ is \emph{strong} if
\begin{equation*}\begin{tikzcd}
    E \ts{q}{p}TM \rar{\lambda\pi_0 +_{p}T(\xi)\pi_1} \dar[swap]{q\pi_0=p\pi_1} &[5ex] T(E) \dar{p}\\
    M \rar{\xi} & E
\end{tikzcd}\end{equation*}
is a $T$-pullback.
\end{definition}
We now make it rigorous that tangent vectors on $T$ splits into a biproduct. 
Whenever pullback powers of $E \ts{q}{p} TM \to M$ exist, then this bundle is a biproduct in the category of differential bundles above $M$.
\begin{lemma}\label{lem:strong-differential-bundle-biproduct}
  Let $\mathbb{X}$ be a tangent category and $E$ a strong differential bundle
  so that $T$-pullback powers of $q\pi_0:E \ts{q}{p} TM \to M$ exist.
  Then:
  \begin{enumerate}[(a)]
    \item $(q\pi_0:E\ts{q}{p} TM \rightarrow M, (\xi,0_M), \lambda\times\ell)$, is a differential bundle,
    \item $(q\pi_0 (\xi,0_M), \lambda\times\ell)$ is the biproduct of $(q,\xi,\lambda)$ and $(p_M,0_M,\ell_M)$ in the category of differential
    bundles and linear morphisms above $M$. 
  \end{enumerate}
\end{lemma}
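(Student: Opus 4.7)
The plan is to verify characterisation (2) of \cref{prop:new-def-diff-bundle-arbitrary} for the candidate differential bundle $(q\pi_0, (\xi, 0_M), \lambda \times \ell)$, and then to exhibit explicit linear injections witnessing the biproduct structure. The key structural observation enabling this is that $T$ preserves the $T$-pullback $E \ts{q}{p} TM$, so $T(E \ts{q}{p} TM) \cong TE \ts{Tq}{Tp} T^2M$; in particular $\lambda \times \ell$ is a well-defined arrow into $T(E \ts{q}{p} TM)$, and every structural calculation reduces to componentwise identities in $E$ and in $TM$.

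For (a), I would first check the pre-differential bundle axioms componentwise: the section, projection, and coalgebra axioms follow immediately from the corresponding axioms for $\lambda$ and for $\ell$ (the latter being tangent category axioms), and the remaining equation $(\lambda \times \ell)(\xi, 0_M) = 0(\xi, 0_M)$ reduces to $\lambda\xi = 0\xi$ together with $\ell 0 = 00$ (the latter from the additive bundle morphism $(\ell, 0)$). To verify Rosick\'{y}'s universality \cref{eq:ros-universal}, any $T$-cone $(\alpha_0, \alpha_1): A \to TE \ts{Tq}{Tp} T^2M$ with base $\beta: A \to M$ decomposes via $\pi_0, \pi_1$ into a Rosick\'{y} cone for $\lambda$ on $E$ and a Rosick\'{y} cone for $\ell$ on $TM$ (the $Tp$-compatibility on $\alpha_1$ being forced to $0\beta$ by the $TE \ts{Tq}{Tp} T^2M$ condition). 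The Rosick\'{y} factorisations for $\lambda$ and $\ell$ then assemble into a unique factorisation through $\lambda \times \ell$ since their base components both agree with $\beta$. The Cockett--Cruttwell universality \cref{eq:cockett-universal} is established analogously: $(E \ts{q}{p} TM)_2 \cong E_2 \times_M (TM)_2$ and the map $(\lambda\times \ell)\pi_0 +_{T(q\pi_0)} 0\pi_1$ decomposes into the Cockett--Cruttwell maps of $q$ and $p_M$, both of which are $T$-pullbacks.

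For (b), the projections $\pi_0, \pi_1$ are coalgebra morphisms directly from the definition of $\lambda \times \ell$, hence linear morphisms of differential bundles by \cref{prop:linear-is-additive}. I would construct the injections $\iota_0 := \langle \mathrm{id}_E, 0q\rangle: E \to E \ts{q}{p} TM$ and $\iota_1 := \langle \xi p, \mathrm{id}_{TM}\rangle: TM \to E \ts{q}{p} TM$. Linearity of $\iota_0$ reduces to the identities $T(q)\lambda = 0q$ and $T(0)(0q) = 00q$ (naturality of $0$); linearity of $\iota_1$ reduces to $\lambda\xi = 0\xi$ and $T\xi \cdot Tp \cdot \ell = 0 \xi p$, which follows from $Tp\ell = 0p$ together with naturality of $0$ at $\xi$. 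The biproduct identities $\pi_i \iota_j = \delta_{ij}$ are immediate, and $\iota_0 \pi_0 +_{q\pi_0} \iota_1 \pi_1 = \mathrm{id}$ unpacks componentwise to the fibrewise zero identities $\pi_0 +_q \xi q\pi_0 = \pi_0$ and $0 p\pi_1 +_{p} \pi_1 = \pi_1$.

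The main obstacle is identifying the addition induced on $q\pi_0$ by \cref{lem:prediff-addition} with the expected componentwise addition $+_q \times +_{Tp}$; this is forced by the uniqueness clause of that lemma together with the fact that $\lambda \times \ell$ is a monomorphism (being a $T$-pullback of the monic arrow $(0, (\xi, 0_M))$), which ensures that the induced addition splits componentwise and thus that the biproduct equations hold on the nose.
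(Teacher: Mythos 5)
Your proposal is correct and follows essentially the same route as the paper: both verify the differential bundle structure on $q\pi_0$ by commutation of limits (reducing the universality diagrams componentwise to those of $\lambda$ and $\ell$) and both exhibit the biproduct via the same injections $\iota_0=(\mathrm{id},0q)$ and $\iota_1=(\xi p,\mathrm{id})$. The only cosmetic difference is that you verify the biproduct through the equations $\pi_i\iota_j=\delta_{ij}$ and $\iota_0\pi_0+\iota_1\pi_1=\mathrm{id}$ in the CMon-enriched hom-sets, whereas the paper checks the coproduct universal property directly; these are equivalent, and your explicit treatment of the induced addition agreeing with the componentwise one is a point the paper leaves implicit.
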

\begin{proof}
~\begin{enumerate}[(a)]
	\item To check that $q\pi_0: E\ts{q}{p}TM \to M$ is a differential bundle use the same construction found in corollary 5.9 of \cite{MR3792842}.
	  In particular by \cref{cor:new-def-diff-bundle-with-negatives} it suffices to prove that the following diagram is
	  a $T$-limit 
	  \[
	    \begin{tikzcd}
	      (E \ts{q}{p} TM)_2 \rar{(\lambda \times l)\pi_0 +_{T(q\pi_0)} 0\pi_1} \dar{q\pi_0}
	      &[10ex] T(E \ts{q}{p} TM) \dar{T(q\pi_0)} \\
	      M \rar{(\xi,0)} & TM
	    \end{tikzcd}
	  \]
	  but this follows immediately by the commutation of limits.
	\item Now we show that $E \ts{q}{p} TM$ is a coproduct.
	  First note that $id_M: M \to M$ is the zero object in the category of differential bundles over $M$ and that each differential bundle $q: E \rightarrow M$ has a zero morphism $(\xi, id_M): 1_M \Rightarrow q$ which is preserved by any linear morphism.
	  The coproduct diagram is:
	  \[
	    E \xrightarrow{\iota_0 = (id, 0 q)} E \ts{q}{p} TM \xleftarrow{\iota_1 = (\xi p, id)}TM
	  \]
	  because for any pair of linear bundle morphisms $f: E \to Z, g: TM \to Z $ the map
	  $(f\pi_0 +_{Z} g\pi_1)$ satisfies:
	  \[
	    (f\pi_0 +_Z g\pi_1)(\xi p, id) = (f \xi p +_Z g id) 
	    = (\xi_Z q +_Z g) = g 
	  \]
	  and
	  \[
	    (f\pi_0 +_Z g \pi_1)(id, 0q) = (f\pi_1 +_Z g0q) 
	      = (f \pi_1 +_Z \xi_Z q) = f
	  \]
	  which are the equations expressing the universal property of a coproduct.
	  Next, we check the biproduct identities:
	  \begin{align*}
	    \pi_0\iota_0 &= \pi_0 (id, 0q) = id \\
	    \pi_1\iota_0 &= \pi_1 (id, 0q) = 0q \\
	    \pi_0\iota_1 &= \pi_0 (\xi p, id) = \xi p \\
	    \pi_1 \iota_1 &= \pi_1 (\xi p, id) = id
	  \end{align*}
	  and so $E\ts{q}{p}T(M)$ is the biproduct of $q:E \rightarrow M$ and $p:T(M) \rightarrow M$ in the category of differential bundles over $M$.
	\end{enumerate}
\end{proof}
In the previous section, we proved that in a tangent category with negatives in which the pullback along $p$ always exists, we could characterise the bundle $T_M(E)$ as an idempotent splitting.
The bundle $T_M(E)$ is the pullback of $p_E$ along $\xi:M \to E$, and a strong differential bundle characterises this pullback as a biproduct $TM \ts{p}{q}E$, so there is a canonical linear idempotent that splits as $(q:E \to M, \xi, \lambda)$. 
\begin{corollary}
    Let $(q:E \to M, \xi,\lambda)$ be a strong differential bundle in a tangent category, and assume $T$-pullback powers of $E \ts{q}{p} TM$ exist. 
    Then there is an idempotent splitting in the category of differential bundles:
    \[
        \begin{tikzcd}
            E \ts{q}{p} TM \rar{\pi_0} \dar[swap]{q\pi_0 = p\pi_1}& E \rar{\iota_0} \dar{q}& E \ts{q}{p} TM \dar{q\pi_0 = p\pi_1}\\
            M \ar[equals]{r} & M \ar[equals]{r} & M
        \end{tikzcd}
    \]
    Furthermore, if $T$-pullback powers of $E \ts{q}{p} TM \to M$ exist, this is a linear splitting in the category of differential bundles above $M$.
\end{corollary}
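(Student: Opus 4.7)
The plan is to harvest everything from \cref{lem:strong-differential-bundle-biproduct}, which already establishes that $q\pi_0 : E \ts{q}{p} TM \to M$ is a differential bundle and realises it as the biproduct of $(q,\xi,\lambda)$ and $(p_M, 0_M, \ell_M)$ in the category of differential bundles above $M$. The splitting claimed by the corollary is then simply the first projection--injection pair of this biproduct, namely $\pi_0 : E \ts{q}{p} TM \to E$ and $\iota_0 = (id, 0q) : E \to E \ts{q}{p} TM$. Since these are among the structural maps of a biproduct of differential bundles above $M$, they are automatically linear morphisms of differential bundles with identity base map.

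The heart of the argument is the biproduct identity $\pi_0 \iota_0 = \pi_0(id, 0q) = id_E$, which is already verified in the proof of the preceding lemma. From this it follows mechanically that $e := \iota_0 \pi_0$ satisfies $e^2 = \iota_0(\pi_0\iota_0)\pi_0 = \iota_0\pi_0 = e$, so $e$ is idempotent on $E \ts{q}{p} TM$ and the pair $(\pi_0,\iota_0)$ exhibits $q : E \to M$ as its splitting. Everything takes place above the fixed base $M$, so the resulting splitting diagram lives in the category of differential bundles above $M$.

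For the ``furthermore'' clause, the additional assertion is that the splitting is linear in the additive sense. This is automatic from \cref{prop:linear-is-additive}: every coalgebra morphism between pre-differential bundles that satisfy Rosick\'{y}'s universality diagram is already an additive bundle morphism, so $\pi_0$, $\iota_0$, and hence $e$ preserve fibrewise addition and zero. I do not anticipate a serious obstacle, because the substantive steps --- constructing the differential bundle structure on $E \ts{q}{p} TM$, verifying its biproduct universal property, and checking the biproduct identities --- were all handled in \cref{lem:strong-differential-bundle-biproduct}. This corollary is essentially the general observation that a biproduct canonically presents either summand as a linear retract.
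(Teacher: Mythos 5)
Your proposal is correct and is exactly the argument the paper intends: the corollary is stated without proof as an immediate consequence of \cref{lem:strong-differential-bundle-biproduct}, with the splitting given by the biproduct identity $\pi_0\iota_0 = id_E$ and linearity of $\pi_0$, $\iota_0$ coming from their role as structural maps of the biproduct in the category of differential bundles and linear morphisms above $M$.
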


In a tangent category with negatives, any differential bundle will be strong, provided the $T$-pullback of the projection along the tangent projection exists.
This means the strong universality condition may be seen as ``the other side'' of the Cockett-Cruttwell universality condition, and both follow from Rosick\'{y}'s universality condition.
\begin{lemma}
  \label{lem:strong-differential-bundle-equivalent-pullbacks}
  If $(q:E \to M, \xi, \lambda)$ is a pre-differential bundle in a tangent category with negatives and the $T$-pullback $E\ts{q}{p}T(M)$ exists then
  \[\begin{tikzcd}
    E \rar{\lambda} \dar[swap]{q} & T(E) \dar{(p,T(q))} \\
    M \rar{(\xi, 0)} & E \times TM
    \end{tikzcd}\]
    is a $T$-pullback if and only if
    \[\begin{tikzcd}
      E \ts{q}{p} T(M) \rar{\lambda\pi_0 +_p T(\xi)\pi_1}
      \dar[swap]{q\pi_0} &[5ex] T(E) \dar{p} \\
      M \rar{\xi} & E
    \end{tikzcd}\]
  is a $T$-pullback.
\end{lemma}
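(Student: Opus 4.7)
The plan is to prove each implication directly by exploiting the negatives in the tangent category to decompose a tangent vector $v \in TE$ with $pv \in \xi(M)$ into a ``vertical'' part (lying in the image of $\lambda$) and a ``horizontal'' part (lying in the image of $T(\xi)$). The preliminary step, common to both directions, is to verify that both candidate squares actually commute for any pre-differential bundle. The only non-obvious commutation is $T(q)\lambda = 0q$; I would derive this by applying $T(p)$ to the coalgebra identity $T(\lambda)\lambda = \ell\lambda$ — using the tangent category axiom $T(p)\ell = 0p$ and $p\lambda = \xi q$ — to obtain $T(\xi)T(q)\lambda = 0\xi q$, and then postcompose with $T(q)$, invoking $q\xi = \mathrm{id}$ and the naturality of $0$. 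The same derivation shows $T(q)\sigma = \pi_1$ where $\sigma := \lambda\pi_0 +_p T(\xi)\pi_1$.

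For the direction from the second pullback (call it (B)) to the first (call it (A)), I would take an arbitrary cone $(a, b) : X \to M \times TE$ over Rosick\'y's cospan, so that $pb = \xi a$ and $T(q)b = 0a$. Since $pb = \xi a$ already exhibits $(a, b)$ as a cone over (B)'s cospan, (B) produces a unique $\hat{b} : X \to E\ts{q}{p}TM$ with $\sigma \hat{b} = b$ and $q\pi_0\hat{b} = a$. Applying $T(q)\sigma = \pi_1$ yields $\pi_1\hat{b} = T(q)b = 0a$, which forces $\hat{b}$ to factor through the monomorphism $(\mathrm{id}, 0q) : E \to E\ts{q}{p}TM$ via $e := \pi_0\hat{b}$; this $e$ then satisfies $qe = a$ and $\lambda e = \sigma(e, 0qe) = b$, with uniqueness inherited from (B).

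For the reverse direction (A) to (B), I would take a cone $(a, b) : X \to M \times TE$ with $\xi a = pb$ and use negatives to form $b' := b -_p T(\xi)T(q)b$. This difference is well-typed because $p\,T(\xi)T(q)b = \xi q pb = \xi a = pb$, so both summands share the fibre over $\xi a$; a short computation then gives $pb' = \xi a$ and $T(q)b' = T(q)b -_{Tq} T(q\xi)T(q)b = 0a$, so $(a, b')$ is a cone over Rosick\'y's cospan. By (A) there is a unique $e : X \to E$ with $qe = a$ and $\lambda e = b'$. Setting $\hat{b} := (e, T(q)b) : X \to E\ts{q}{p}TM$ (well-typed because $qe = a = q p b = p T(q)b$), one checks $\sigma\hat{b} = \lambda e +_p T(\xi)T(q)b = b' +_p T(\xi)T(q)b = b$, giving the required factorisation; uniqueness follows by reversing the extraction of $e$ and $T(q)b$ from any other candidate.

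The main obstacle will be the bookkeeping around which sums are fibred over which projections, since at each step one must verify that the two summands lie in a common fibre of the relevant $p$-type projection before the sum or difference is meaningful. Once this is established, the $T$-limit aspect is essentially automatic: all of the above constructions are built from natural transformations, $q\xi = \mathrm{id}$, and the assumed $T$-pullback hypothesis, so applying $T^n$ to any cone reproduces a cone of the same form and returns the corresponding factorisation through $T^n(E)$ or $T^n(E\ts{q}{p}TM)$.
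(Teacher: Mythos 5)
Your proposal is correct and follows essentially the same route as the paper: the key step in both is the decomposition $b = \bigl(b -_p T(\xi)T(q)b\bigr) +_p T(\xi)T(q)b$ into a vertical part factoring through $\lambda$ and a horizontal part factoring through $T(\xi)$, and your use of $T(q)\sigma = \pi_1$ to extract the second component is exactly the paper's ``post-compose with $T^{n+1}(q)$'' step. The only difference is presentational — the paper carries the $T^n$ prefixes explicitly throughout, whereas you argue at the base level and observe that the construction is stable under $T^n$ — and your preliminary derivation of $T(q)\lambda = 0q$ is a correct (and worthwhile) verification that the squares commute.
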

\begin{proof}
  First, we prove the forward implication.
  So let $a: A
  \to T^{n+1}(E)$ so that $T^n(p)a = T^{n}(\xi qp)a$.
  Then $a -_{T^{n}p} T^{n+1}(\xi q)a$ satisfies:
  \begin{itemize}
  \item $T^{n+1}(q)(a -_{T^{n}p} T^{n+1}(\xi q)a) = T^{n+1}(q)a -_{T^{n}p}
    T^{n+1}(q\xi q) a = T^{n}(0qp)a$ and
  \item $T^{n}(p)(a -_{T^{n}p }T^{n+1}(\xi q)a) = T^{n}(p)a = T^{n}(\xi qp)a$
  \end{itemize}
  and so using the first bullet point in the statement of this lemma we induce an $a': A \to T^{n}E$ such that $T^n(\lambda)a' = a -_{T^{n}(p)} T^{n+1}(\xi q)a$. 
  Now $(a',
  T^n(q)a): A \to E \ts{q}{p} TM$ is the factorisation we need because:
  \begin{align*}
      (T^n(\lambda) \pi_0 +_{T^n(p)} T^{n+1}(\xi)\pi_1) (a', T^{n+1}(q)a) &= T^n(\lambda) a' +_{T^n(p)} T^{n+1}(\xi q)a \\
      &= (a -_{T^n(p)} T^{n+1}(\xi q) a) +_{T^n(p)} T^{n+1} (\xi q) a \\
      &= a
  \end{align*}
  which is unique because $T^n(\lambda)$ and $T^{n+1}(\xi)$ are monomorphisms.
  Next, we prove the converse implication.
  So let $a: A \to T^{n+1}E$ be such that $T^{n+1}(q)a =
  T^{n}(0 q p)a$ and $T^n(p)a = T^n(\xi qp)a$.
  By combining the latter equation with the assumption contained second bullet point of the statement of this lemma we induce a pair of maps
  $(\hat{a}_0,\hat{a}_1): A \to T^n(E \ts{q}{p}TM)$ so that
  \[
  T^n(\lambda)\hat{a}_0 +_{T^n(p)} T^{n+1}(\xi)\hat{a}_1 = a\]
  holds.
  Post-composing both sides of this equation with $T^{n+1}(q)$ gives $\hat{a_1} = T^{n+1}(q) a = T^n(0qp) a$ and so the displayed equation is in fact $T^n(\lambda)\hat{a_0} = a$ which shows that $\hat{a_0}:A \rightarrow T^n(E)$ is the factorisation we require.
  This factorisation is unique because $T^n(\lambda)\pi_0 +_{T^n(p)} T^{n+1}(\xi)\pi_1$ and $T^{n+1}(\xi)$ are monomorphisms.
\end{proof}
\begin{corollary}
  \label{cor:strong-differential-bundle-negatives}
  Let $(q:E \to M,\xi,\lambda)$ be a pre-differential bundle in a tangent category with
  negatives such that:
  \begin{itemize}
  \item $n$-fold pullback powers of $q$ exist and are $T$-limits and
  \item the pullback $E \ts{q}{p}TM$ exists and is a $T$-pullback.
  \end{itemize}
  Then $(\lambda,q,\xi)$ is a strong differential bundle if and only if
  \item \[\begin{tikzcd}
      E \rar{\lambda} \dar[swap]{q} & T(E) \dar{(p,T(q))} \\
      M \rar{(\xi, 0)} & E \times TM
    \end{tikzcd}\]
    is a $T$-pullback.
\end{corollary}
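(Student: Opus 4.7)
The plan is to observe that this corollary is essentially a packaging of two prior results: \cref{cor:new-def-diff-bundle-with-negatives} (which, in the presence of negatives and $n$-fold pullback powers of $q$, shows that Rosick\'y's universality diagram upgrades a pre-differential bundle to a differential bundle) and \cref{lem:strong-differential-bundle-equivalent-pullbacks} (which, under the hypothesis that $E \ts{q}{p}TM$ exists as a $T$-pullback, identifies Rosick\'y's universality with the strong universality). So the main work is bookkeeping: separating the two conditions appearing in the definition of a strong differential bundle (``is a differential bundle'' and ``satisfies the strong universality diagram'') and feeding each to the appropriate prior result.

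For the forward direction, assume $(\lambda,q,\xi)$ is a strong differential bundle. Then by definition the strong universality square is a $T$-pullback, and by applying \cref{lem:strong-differential-bundle-equivalent-pullbacks} (converse implication, which is exactly where $E\ts{q}{p}TM$ being a $T$-pullback is used) we conclude that Rosick\'y's square is also a $T$-pullback.

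For the reverse direction, assume Rosick\'y's square is a $T$-pullback. Since we are in a tangent category with negatives and $n$-fold $T$-pullback powers of $q$ exist, \cref{cor:new-def-diff-bundle-with-negatives} promotes the pre-differential bundle to a differential bundle. Then the forward implication of \cref{lem:strong-differential-bundle-equivalent-pullbacks} gives us the strong universality square as a $T$-pullback, which is precisely the condition that makes this differential bundle strong.

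Since both implications are reductions to previously established lemmas, there should be no substantive obstacle, and in fact no calculation beyond citing the two results. The only subtlety worth being explicit about is that the hypothesis ``the pullback $E\ts{q}{p}TM$ exists and is a $T$-pullback'' is exactly what is required to invoke \cref{lem:strong-differential-bundle-equivalent-pullbacks} in both directions, so the statement is clean and self-contained.
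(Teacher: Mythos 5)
Your proposal is correct and matches the paper's (implicit) argument exactly: the corollary is stated without proof precisely because it is the immediate combination of \cref{cor:new-def-diff-bundle-with-negatives} and \cref{lem:strong-differential-bundle-equivalent-pullbacks}, which is what you do. The only quibble is the parenthetical claiming the $T$-pullback hypothesis on $E\ts{q}{p}TM$ is used only in the converse direction of the lemma — it is a standing hypothesis needed in both directions, as you yourself note at the end.
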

Recall that in the category of smooth manifolds, $p_M$ is a submersion so $T$-pullbacks along $p_M$ exist for all $M$, thus we have the following corollary.
\begin{corollary}\label{cor:differential-bundles-in-smooth-manifolds-are-strong}
    In the category of smooth manifolds, every differential bundle is strong.
\end{corollary}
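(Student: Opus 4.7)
The plan is to reduce the claim to \cref{cor:strong-differential-bundle-negatives}, which states that in a tangent category with negatives, a pre-differential bundle $(q:E \to M,\xi,\lambda)$ whose $T$-pullback powers of $q$ exist is a strong differential bundle if and only if Rosick\'{y}'s universality diagram is a $T$-pullback, provided the $T$-pullback $E \ts{q}{p} TM$ exists. So for a given differential bundle in the category of smooth manifolds I need to verify the three hypotheses of that corollary.

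First I would observe that the category of smooth manifolds is a tangent category with negatives, using the usual fibrewise negation on the tangent bundle. Next, by \cref{prop:new-def-diff-bundle-arbitrary} every differential bundle automatically satisfies Rosick\'{y}'s universality condition (this is part of characterisation (2) in that proposition), and $T$-pullback powers of $q$ exist and are $T$-limits by the very definition of a differential bundle. So the only nontrivial hypothesis to check is the existence of the $T$-pullback $E \ts{q}{p} TM$.

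For this existence I would invoke \cref{cor:sman-r-display}, which says that submersions in smooth manifolds form a proper retractive display system, together with the classical fact (recorded there as the ``proper'' condition) that $p_M \colon TM \to M$ is a submersion for every smooth manifold $M$. Stability under $T$-pullback is part of the definition of a (retractive) display system, so the pullback of $p_M$ along $q$ exists in $\mathbb{X}$ and is itself a $T$-pullback.

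Once all three hypotheses are in hand, applying \cref{cor:strong-differential-bundle-negatives} directly yields that $(q,\xi,\lambda)$ is strong. I do not anticipate any serious obstacle: the work of the section has already been done in \cref{cor:strong-differential-bundle-negatives} and in the display-system machinery of \cref{sub:retractive-display}, so the proof in the smooth manifolds case is essentially just a bookkeeping exercise checking that the ambient category really does satisfy the listed hypotheses (negatives, and the existence of the pullback along $p_M$).
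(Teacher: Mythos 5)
Your proof is correct and follows essentially the same route as the paper: the paper's justification is precisely that $p_M$ is a submersion in smooth manifolds, so the $T$-pullback $E \ts{q}{p} TM$ exists, and then \cref{cor:strong-differential-bundle-negatives} applies since smooth manifolds form a tangent category with negatives and every differential bundle satisfies Rosick\'{y}'s universality condition. Your verification of the hypotheses is exactly the intended bookkeeping.
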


\subsection{Differential bundles and retractive display systems}\label{sub:differential-bundles-retractive-display}

The condition that a differential bundle $(q: E \to M, \xi, \lambda)$ has $T$-pullback powers of its projection $q$ is necessary to ensure it has a coherent additive bundle structure.
In \cite{MR3792842}, \textit{display} differential bundles were considered - these are differential bundles $(q:E \to M, \xi, \lambda)$ in a display tangent category $(\mathbb{X}, \mathcal{D})$ satisfying $q \in \mathcal{D}$, which guarantees the existence of $T$-pullback powers of $q$ (along with some convenient re-indexing properties).
In this section we explore how the strong universality condition of \cref{sub:differential-bundles-as-retracts} interacts with the retractive display systems of \cref{sub:retractive-display}, and find that every strong differential bundle is displayed.

\begin{definition}\label{def:display-diff-bun}
    Let $\mathbb{X}$ be a tangent category with a display system $\mathcal{D}$. 
    We say that $(q,\xi, \lambda)$ is a $\mathcal{D}$-displayed differential bundle if $q \in \mathcal{D}$. 
\end{definition}

Now suppose our tangent category has a proper retractive display system $\mathcal{R}$.
Because a strong differential bundle naturally splits a linear idempotent of a pullback of a tangent projection, every strong differential bundle is $\mathcal{R}$-display.
\begin{proposition}
\label{prop:strong-differential-bundles-closed-under-base-change-and-right-factor}
    If $\mathcal{R}$ is a proper retractive display system on a tangent category $\mathbb{X}$ 
    and $(q: E\to M, \xi, \lambda)$ is a \emph{strong} differential bundle in $\mathbb{X}$, then $q$ is in $\mathcal{R}$.
\end{proposition}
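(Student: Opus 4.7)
The plan is to exhibit $q$ as a retract in the arrow category of a morphism already known to lie in $\mathcal{R}$, and then invoke the defining properties of a proper retractive display system. The candidate retract target is $q\pi_0 = p\pi_1 : E \ts{q}{p} TM \to M$.

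First, I would invoke the strong universality of $(q,\xi,\lambda)$. By \cref{def:strong-differential-bundle}, the square
\[
\begin{tikzcd}
    E \ts{q}{p}TM \rar{\lambda\pi_0 +_{p}T(\xi)\pi_1} \dar[swap]{q\pi_0=p\pi_1} &[5ex] T(E) \dar{p}\\
    M \rar{\xi} & E
\end{tikzcd}
\]
is a $T$-pullback, so $q\pi_0$ is the $T$-pullback of the tangent projection $p_E : TE \to E$ along $\xi : M \to E$. Because $\mathcal{R}$ is proper, $p_E \in \mathcal{R}$, and because $\mathcal{R}$ is stable under $T$-pullback (it is a tangent display system), we conclude $q\pi_0 \in \mathcal{R}$.

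Second, I would exhibit $q$ as a retract of $q\pi_0$ in the arrow category, using exactly the splitting recorded immediately after \cref{lem:strong-differential-bundle-biproduct}. The section is $\iota_0 = (id_E, 0_M q) : E \to E \ts{q}{p} TM$, which is well-defined because $p \cdot 0_M q = q$, and the retraction is $\pi_0 : E \ts{q}{p} TM \to E$. Both maps commute with the projections to $M$ via $id_M$, and $\pi_0 \iota_0 = id_E$, giving a retract diagram
\[
\begin{tikzcd}
    E \rar{\iota_0} \dar[swap]{q} & E \ts{q}{p} TM \rar{\pi_0} \dar{q\pi_0} & E \dar{q} \\
    M \ar[equals]{r} & M \ar[equals]{r} & M
\end{tikzcd}
\]
in the arrow category.

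Finally, since $\mathcal{R}$ is closed under retracts in the arrow category, we obtain $q \in \mathcal{R}$. There is no serious obstacle here: the real content of the statement is the observation that strong universality is precisely what is needed to package $q\pi_0 : E \ts{q}{p} TM \to M$ as a $T$-pullback of a tangent projection, after which the two properties of a proper retractive display system (properness and closure under retracts of $T$-pullback stable classes) do all the remaining work.
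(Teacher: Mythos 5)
Your proof is correct and follows essentially the same route as the paper's: both use strong universality to exhibit $q\pi_0 : E \ts{q}{p} TM \to M$ as the $T$-pullback of $p_E$ along $\xi$ (hence in $\mathcal{R}$ by properness and $T$-pullback stability), and then split off $q$ as a retract via $\iota_0 = (id, 0q)$ and $\pi_0$, concluding by retract-closure. The only cosmetic difference is that the paper phrases the retraction as the splitting of the linear idempotent $\iota_0\pi_0$ coming from the biproduct decomposition of \cref{lem:strong-differential-bundle-biproduct}, whereas you write the retract diagram in the arrow category directly.
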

\begin{proof}
    This follows immediately from \cref{lem:strong-differential-bundle-biproduct}.
    Let $(q:E \to M, \xi, \lambda)$ be a strong differential bundle and consider the following diagrams:
    \[
        \begin{tikzcd}
            E \ts{q}{p} TM \rar{\nu} \dar[swap]{q\pi_0 = p\pi_1} & TE \dar{p} \\
            M \rar{\xi} & E
        \end{tikzcd}
        \hspace{0.5cm}
            \begin{tikzcd}
            E\ts{q}{p}T(M) \dar{q\pi_0} \rar{\pi_0} & E \dar{q} \rar{(id, 0q)} & E\ts{q}{p}T(M) \dar{q\pi_0} \\
             M \ar[equals]{r} & M \ar[equals]{r} & M
        \end{tikzcd}
    \]
    the left diagram exhibits $(q\pi_0, (\xi,0), (\lambda \times \ell))$ as the pullback of the tangent projection on the total space $p_E$, and the right diagram exhibits $(q,\lambda,\xi)$ is the splitting of the linear idempotent $\iota_0\pi_0$ on  $(q\pi_0, (\xi,0), (\lambda \times \ell))$, so we have $q$ is $\mathcal{R}$-displayed.
\end{proof}{}
We can use \cref{prop:strong-differential-bundles-closed-under-base-change-and-right-factor} to show that every differential bundles in a tangent category with negatives and a retractive display system is $\mathcal{R}$-displayed.
We first prove a lemma that holds in a tangent category with negatives and display system.
\begin{lemma}\label{lem:negatives-R-every-db-strong}
    In a tangent category with negatives and a proper display system $\mathcal{D}$, every differential bundle is strong.
\end{lemma}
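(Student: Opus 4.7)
The plan is to reduce the claim to \cref{cor:strong-differential-bundle-negatives}, which in a tangent category with negatives characterises strong differential bundles (among pre-differential bundles with enough pullbacks) by Rosick\'y's universality diagram. So the task splits into verifying, for an arbitrary differential bundle $(q: E \to M, \xi, \lambda)$, the three input hypotheses of that corollary: Rosick\'y's universality, the existence of $n$-fold $T$-pullback powers of $q$, and the existence of the $T$-pullback $E \ts{q}{p} TM$.

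First, Rosick\'y's universality (\cref{eq:ros-universal}) together with the existence of $T$-pullback powers of $q$ are among the equivalent reformulations of being a differential bundle supplied by \cref{prop:new-def-diff-bundle-arbitrary}, so both come for free from the hypothesis that $(q,\xi,\lambda)$ is a differential bundle.

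The remaining ingredient is the pullback $E \ts{q}{p_M} TM$, and this is exactly where properness of the display system is used. Since $\mathcal{D}$ is proper, $p_M \in \mathcal{D}$ for every $M$; since $\mathcal{D}$ is a tangent display system, every map in $\mathcal{D}$ admits $T$-pullbacks along arbitrary arrows, so in particular the $T$-pullback of $p_M$ along $q: E \to M$ exists. This produces $E \ts{q}{p} TM$ as a $T$-pullback, which is the second hypothesis of \cref{cor:strong-differential-bundle-negatives}.

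With all three hypotheses in hand, \cref{cor:strong-differential-bundle-negatives} applies and yields that $(q,\xi,\lambda)$ is strong, finishing the proof. There is no serious obstacle here: the argument is purely a bookkeeping assembly of earlier results, with the only non-formal point being the observation that ``proper'' is precisely the axiom needed to supply the single extra $T$-pullback that \cref{cor:strong-differential-bundle-negatives} demands beyond what a differential bundle already provides.
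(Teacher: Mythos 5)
Your proposal is correct and follows essentially the same route as the paper: both reduce the claim to \cref{cor:strong-differential-bundle-negatives}, note that a differential bundle already satisfies Rosick\'y's universality and has $T$-pullback powers of $q$, and use properness ($p_M \in \mathcal{D}$) together with stability of $\mathcal{D}$ under $T$-pullbacks to supply the one missing hypothesis, the existence of the $T$-pullback $E \ts{q}{p} TM$. Your write-up is merely more explicit about where each hypothesis of the corollary comes from.
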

\begin{proof}
    Consider a differential bundle $(q:E \to M, \xi, \lambda)$, by \cref{cor:strong-differential-bundle-negatives} this is a strong differential bundle if and only if the $T$-pullback $E \ts{q}{p} TM$ exists, but this holds as $p \in \mathcal{D}$.
\end{proof}
The following corollary is a straightforward application of the two previous results  (\cref{prop:strong-differential-bundles-closed-under-base-change-and-right-factor} and \cref{lem:negatives-R-every-db-strong}).
\begin{corollary}\label{cor:negatives-retractive-display-all-differential}
    In a tangent category with negatives and a proper retractive display system $\mathcal{R}$, every differential bundle $(q:E \to M, \xi,\lambda)$ has its projection $q \in \mathcal{R}$.
\end{corollary}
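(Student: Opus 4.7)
The plan is to chain together the two immediately preceding results. Starting from an arbitrary differential bundle $(q:E \to M, \xi, \lambda)$ in a tangent category with negatives equipped with a proper retractive display system $\mathcal{R}$, I first want to upgrade it to a strong differential bundle, and then invoke the result that strong differential bundles have their projections in $\mathcal{R}$.

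The first step is to apply \cref{lem:negatives-R-every-db-strong}. A retractive display system is in particular a display system, so the hypotheses of that lemma are met: we have negatives and a proper display system (namely $\mathcal{R}$ itself). Since $\mathcal{R}$ is proper, $p_M \in \mathcal{R}$ for all $M$, which ensures that the $T$-pullback $E \ts{q}{p} TM$ exists (it is a $T$-pullback along $p_M$). By \cref{cor:strong-differential-bundle-negatives}, existence of this $T$-pullback in the presence of negatives is exactly what is needed to promote $(q, \xi, \lambda)$ to a strong differential bundle, since a differential bundle already satisfies Rosick\'{y}'s universality condition by \cref{prop:new-def-diff-bundle-arbitrary}.

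Having established that $(q, \xi, \lambda)$ is strong, the second step is a direct application of \cref{prop:strong-differential-bundles-closed-under-base-change-and-right-factor}, which says that $q \in \mathcal{R}$ whenever $\mathcal{R}$ is a proper retractive display system and the differential bundle is strong. This delivers the conclusion.

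There is essentially no obstacle to overcome here, since both intermediate results have already been proved; the only subtlety worth pointing out explicitly is that ``proper retractive display system'' implies ``proper display system,'' so \cref{lem:negatives-R-every-db-strong} applies with $\mathcal{D} := \mathcal{R}$. The proof should therefore be short — a single sentence or two citing the two prior results and observing this implication.
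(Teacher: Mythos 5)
Your proposal is correct and matches the paper exactly: the paper states this corollary as a straightforward combination of \cref{lem:negatives-R-every-db-strong} (every differential bundle is strong in the presence of negatives and a proper display system) followed by \cref{prop:strong-differential-bundles-closed-under-base-change-and-right-factor} (strong differential bundles have their projections in $\mathcal{R}$). Your observation that a proper retractive display system is in particular a proper display system is the only gluing step needed, and it is valid.
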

When we apply this corollary to the category of smooth manifolds, where $\mathcal{R}$ is the class of submersions, we have the following:
\begin{corollary}
    In the category of smooth manifolds where the class of submersions is $\mathcal{R}$, every differential bundle is $\mathcal{R}$-display.
\end{corollary}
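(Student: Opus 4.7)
The plan is to invoke the abstract result \cref{cor:negatives-retractive-display-all-differential} in the concrete setting of smooth manifolds. First I would note that the category of smooth manifolds is a tangent category with negatives (the negation natural transformation is fibrewise negation on tangent vectors), so it satisfies the hypothesis concerning negatives. Second, I would cite \cref{cor:sman-r-display}, which already established that the class of submersions in the category of smooth manifolds forms a proper retractive display system.

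With these two observations, every hypothesis of \cref{cor:negatives-retractive-display-all-differential} is met with $\mathcal{R}$ taken to be the class of submersions. Applying that corollary yields immediately that for any differential bundle $(q: E \to M, \xi, \lambda)$ in smooth manifolds, the projection $q$ lies in $\mathcal{R}$, which is exactly the statement that $q$ is a submersion, i.e.\ $(q,\xi,\lambda)$ is $\mathcal{R}$-displayed in the sense of \cref{def:display-diff-bun}.

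There is essentially no obstacle; this final corollary is a direct instantiation. The only point requiring any care is to confirm that the two ambient hypotheses — existence of negatives and existence of a proper retractive display system of submersions — are precisely the two facts assembled just before, so no extra computation is needed beyond citing the two antecedent results.
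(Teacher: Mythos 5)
Your proposal is correct and follows exactly the paper's route: the paper presents this corollary as an immediate specialisation of \cref{cor:negatives-retractive-display-all-differential}, using the fact that smooth manifolds form a tangent category with negatives together with \cref{cor:sman-r-display}. No gaps.
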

Now that we have shown that the projection of a differential bundle in the category of differential bundles is a submersion, we may rewrite the lift in local coordinates.
\begin{proposition}\label{lem:lift-in-local-coords}
    Let $(q: E \to M, \xi, \lambda)$ be a differential bundle in the category of smooth manifolds. Then the lift $\lambda$ may be rewritten in local coordinates as $\lambda(u,a) = (u,0,0,a)$.
\end{proposition}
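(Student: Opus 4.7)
The plan is to select local coordinates on $E$ in which the axioms for a differential bundle pin $\lambda$ down into the claimed form. First, by \cref{cor:negatives-retractive-display-all-differential} applied to the category of smooth manifolds (which has negatives, and in which the submersions form a proper retractive display system by \cref{cor:sman-r-display}), the projection $q$ is a classical submersion. The submersion theorem then produces a local chart identifying $E$ with $U \times V \subset \mathbb{R}^n \times \mathbb{R}^k$ so that $q$ is the projection $(u, a) \mapsto u$; by translating the fiber coordinate we may further arrange $\xi(u) = (u, 0)$. In the induced coordinates $(u, a, \dot u, \dot a)$ on $TE$, the pre-differential bundle identity $p\lambda = \xi q$ forces the base-point components of $\lambda(u, a)$ to be $(u, 0)$, and the differential bundle axiom $T(q)\lambda = 0q$ (from $(\lambda, 0)$ being an additive bundle morphism $q \Rightarrow T(q)$) forces the $\dot u$-component to vanish. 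Hence $\lambda(u, a) = (u, 0, 0, f(u, a))$ for some smooth $f : U \times V \to \mathbb{R}^k$, and $\lambda\xi = 0_E\xi$ gives $f(u, 0) = 0$.

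Next, I will pin down $f$ using the remaining structure. Writing the canonical lift on $TE$ in local coordinates as $\ell(u, a, \dot u, \dot a) = (u, a, 0, 0, 0, 0, \dot u, \dot a)$, computing $T\lambda$ as the Jacobian of the component expression of $\lambda$, and evaluating the coalgebra equation $T(\lambda)\lambda = \ell\lambda$ at $(u, a)$, the first seven coordinates of $T^2E$ match automatically while the eighth yields the scalar identity $\partial_a f(u, 0) \cdot f(u, a) = f(u, a)$. Separately, Rosick\'{y}'s universality \cref{eq:ros-universal} (which holds for every differential bundle by \cref{prop:new-def-diff-bundle-arbitrary}) translates in this chart to the statement that $(u, a) \mapsto (u, f(u, a))$ is a diffeomorphism onto an open subset of $M \times \mathbb{R}^k$, so $a \mapsto f(u, a)$ is a fiberwise local diffeomorphism and $\partial_a f(u, 0)$ is invertible. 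Combined with the previous scalar identity (either by surjectivity of $f(u, \cdot)$ onto a neighborhood of $0$, or by differentiating at $a = 0$ to get the matrix idempotency $\partial_a f(u, 0)^2 = \partial_a f(u, 0)$ and using invertibility), we conclude $\partial_a f(u, 0) = I$.

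Since $f(u, 0) = 0$ and $\partial_a f(u, 0) = I$, the inverse function theorem makes $(u, a) \mapsto (u, f(u, a))$ into a new smooth chart around the zero section, still compatible with the form of $q$ and of $\xi$; a short direct computation in the new coordinates verifies that $\lambda$ becomes $(u, a) \mapsto (u, 0, 0, a)$, as claimed. The main technical obstacle is the coordinate computation in the second paragraph: one must correctly expand $T(\lambda)\lambda$ and $\ell\lambda$ in the eight coordinates on $T^2E$ to extract the scalar constraint on $\partial_a f$, and then combine it with Rosick\'{y}'s universality to force $\partial_a f(u, 0) = I$ rather than merely some idempotent endomorphism.
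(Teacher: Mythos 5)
Your proof is correct and follows the same overall strategy as the paper: use the fact that $q$ is a submersion to get a product chart, use $p\lambda=\xi q$ and $T(q)\lambda=0q$ to reduce $\lambda$ to the form $(u,a)\mapsto(u,0,0,f(u,a))$, and then invoke Rosick\'{y}'s universality to finish. The difference is in the last step. The paper simply writes the universality square as a cube over its coordinate model and reads off $\lambda(u,a)=(u,0,0,a)$; this identifies $E$ locally with the coordinate pullback via the comparison isomorphism, but if one insists (as one should) that the chart on $TE$ be the tangent of the chart on $E$, the recoordinatisation $(u,a)\mapsto(u,f(u,a))$ transforms $\lambda$ into $(u,b)\mapsto(u,0,0,\partial_a f(u,0)\,b)$, so one still needs $\partial_a f(u,0)=I$. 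You supply exactly this missing ingredient: the coalgebra equation $T(\lambda)\lambda=\ell\lambda$ yields $\partial_a f(u,0)f(u,a)=f(u,a)$, universality makes $\partial_a f(u,0)$ invertible, and together these force $\partial_a f(u,0)=I$, after which the change of chart is legitimate. So your argument is a more complete version of the paper's: it costs one extra coordinate computation with $T\lambda$ and $\ell$ on $T^2E$, and it buys a proof that genuinely lands on the stated normal form in a single induced chart rather than in mismatched charts on $E$ and $TE$. The only blemishes are cosmetic: the identity $\partial_a f(u,0)f(u,a)=f(u,a)$ is a matrix--vector identity rather than a scalar one (though you treat it correctly afterwards), and one should note that the restriction of the global pullback isomorphism to the chart is what justifies reading Rosick\'{y}'s universality fibrewise over $U$.
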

\begin{proof}
    We can use the implicit function theorem to write $\lambda$ in terms of local co-ordinates (see \cite{MR2680546} for details).
    The equations $T(q)\lambda = 0q$ and $p \lambda = \xi q$ imply that $\lambda(u, a) = (u, 0, 0, \Lambda(u, a))$ for some $\Lambda(u, a) \in \mathbb R^n$.
    Next we write the universality of the lift in terms of local coordinates:
    \[\begin{tikzcd}
    B^k\times \mathbb R^n \drar{\iota} \arrow{rrr}{(\pi_0, 0, 0, \pi_1)} \arrow{ddd}{\pi_0} & & & B^k\times \mathbb R^n\times \mathbb{R}^k\times \mathbb R^n \arrow{ddd}{(\pi_0, \pi_2, \pi_0, \pi_1)} \dlar{\iota}\\
    & E \dar{q} \rar{\lambda} & T(E) \dar{(T(q), p)} & \\
    & M \rar{(0, \xi)} & T(M)\times E & \\
    B^k \arrow{rrr}{(id, 0, id, 0)} \urar{\iota} & & & B^k\times \mathbb R^k \times B^k \times \mathbb R^n \ular{\iota}
    \end{tikzcd}\]
    where $B^k$ is the unit ball in $\mathbb R^k$ and so $\lambda(u, a) = (u, 0, 0, a)$.
\end{proof}
   
In case a tangent category with negatives has a proper retractive display system $\mathcal{R}$, every differential bundle is $\mathcal{R}$-display.
Because we have shown the equivalence of various universality conditions, we can use the retract-closed property to force various pullbacks to be $T$-pullbacks.
We use this to give a simplified definition of a differential bundle that is simpler in practice to verify.
\begin{lemma}
  \label{cor:descent-limit-iff}
  Let $(q:E \to M, \xi, \lambda)$ be a pre-differential bundle in a tangent category with negatives and a proper display system $\mathcal{D}$ (as defined in \cref{def:display-system}). If either of the diagrams in the statement of \cref{lem:strong-differential-bundle-equivalent-pullbacks} 
    \[\begin{tikzcd}
    E \rar{\lambda} \dar[swap]{q} & T(E) \dar{(p,T(q))} \\
    M \rar{(\xi, 0)} & E \times TM
    \end{tikzcd}
    \hspace{0.1cm}
    \begin{tikzcd}
      E \ts{q}{p} T(M) \rar{\lambda\pi_0 +_p T(\xi)\pi_1}
      \dar[swap]{q\pi_0} &[5ex] T(E) \dar{p} \\
      M \rar{\xi} & E
    \end{tikzcd}\]
  is a pullback then both diagrams are $T$-pullbacks.
\end{lemma}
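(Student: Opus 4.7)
The plan is to exploit the hypothesis of a proper display system so that the candidate pullback vertices involved are already $T$-limits; then a commuting square over such a vertex is a pullback precisely when its comparison map into the $T$-limit is an isomorphism, which is automatically preserved by every $T^n$.

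First, since $\mathcal{D}$ is proper and closed under the tangent functor, $T^n(p_M) \in \mathcal{D}$ for every $n \geq 0$. Consequently, both $E \ts{q}{p} TM$ and the pullback $T_M E$ of $p : TE \to E$ along $\xi : M \to E$ exist and are $T$-limits, and their $T^n$-images remain pullbacks of the corresponding tangent data.

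Next, I would handle the strong universality diagram. As a cone over the cospan $M \xrightarrow{\xi} E \xleftarrow{p} TE$, the diagram being a pullback says exactly that the comparison map $\phi : E \ts{q}{p} TM \to T_M E$ induced by $\nu = \lambda\pi_0 +_p T(\xi)\pi_1$ and $q\pi_0$ is an isomorphism. Since isomorphisms are preserved by every functor, $T^n \phi$ is an isomorphism for all $n$, and both its source and target are the $T^n$-pullbacks of this cospan by the first step; hence $T^n$ applied to the strong diagram is a pullback, i.e., the strong diagram is already a $T$-pullback.

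Finally, for the Rosick\'{y} direction, I would observe that the argument in \cref{lem:strong-differential-bundle-equivalent-pullbacks} works level by level and at each $n$ only needs $T^n E \ts{T^n q}{T^n p} T^{n+1} M$ to exist as a pullback, which holds by the first step. The $n=0$ case then gives ``Rosick\'{y} is a pullback iff strong is a pullback'', so under the assumption that Rosick\'{y} is a pullback we first conclude strong is a pullback; by the previous paragraph this upgrades to strong being a $T$-pullback; and a second application of the lemma at each $n$ transfers $T$-universality back to Rosick\'{y}. The main obstacle is recognising that both upgrades --- pullback to $T$-pullback for the strong diagram, and translating between the two universality conditions --- rely on properness of $\mathcal{D}$ to ensure the ambient tangent pullbacks exist as $T$-limits.
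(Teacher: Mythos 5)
Your proposal is correct and follows essentially the same route as the paper: use properness of $\mathcal{D}$ to get the ambient $T$-pullbacks along $p$, observe that the strong diagram being a pullback makes its comparison map into the canonical $T$-limit an isomorphism (hence a $T$-pullback), and transfer to the Rosick\'{y} diagram via \cref{lem:strong-differential-bundle-equivalent-pullbacks}. Your explicit remark that the equivalence lemma works level by level (so that its $n=0$ instance gives ``pullback iff pullback'') is a point the paper's proof leaves implicit, and is a worthwhile clarification.
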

\begin{proof}
    By \cref{lem:strong-differential-bundle-equivalent-pullbacks}, if one is a pullback the other is. If the strong universality diagram is a pullback it is a $T$-pullback because $p\in \mathcal{D}$. Then by \cref{cor:descent-limit-iff} both diagrams are $T$-pullbacks.
\end{proof}

The category of smooth manifolds is a tangent category with negatives.
Furthermore, the surjective submersions form a proper retractive display system $\mathcal{R}$ as defined in \cref{def:display-system}.
The following result describes a situation where the characterisation of differential bundles further simplifies. 
First, we remove the requirement that pullback powers of $q$ exist.
Second, we remove the requirement that the diagram expressing the universality of the lift is a $T$-limit (although we still require it be a limit).

\begin{proposition}
  \label{prop:diffential-bundle-as-pullback}
  Let $\mathbb{X}$ be a tangent category with negatives equipped with a proper retractive system $\mathcal{R}$. 
  A pre-differential bundle $(q:E \to M, \xi, \lambda)$ is a differential bundle if and only if 
  \begin{equation}
  \label{eq:negatives-descent-universality}
    \begin{tikzcd}
      E \rar{\lambda} \dar{q} & TE \dar{(p,Tq)} \\
      M \rar{(\xi,0)} & E \times TM
    \end{tikzcd} 
  \end{equation}
  is a pullback.
\end{proposition}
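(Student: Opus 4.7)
The plan is to reduce the statement to \cref{cor:new-def-diff-bundle-with-negatives} by leveraging the hypotheses of negatives and a proper retractive display system to promote the bare pullback assumption into both a $T$-pullback and the existence of $T$-pullback powers of $q$. The forward direction is immediate: if $(q,\xi,\lambda)$ is a differential bundle then \cref{cor:new-def-diff-bundle-with-negatives} asserts that \cref{eq:negatives-descent-universality} is a $T$-pullback, and hence certainly a pullback.

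For the backward direction, suppose \cref{eq:negatives-descent-universality} is a pullback. Since $\mathcal{R}$ is proper we have $p \in \mathcal{R}$, so the $T$-pullback $E \ts{q}{p} TM$ exists. I would then apply \cref{cor:descent-limit-iff} to upgrade the pullback assumption to a $T$-pullback, and simultaneously to obtain the strong universality diagram from \cref{lem:strong-differential-bundle-equivalent-pullbacks} as a $T$-pullback.

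The strong universality $T$-pullback exhibits $q\pi_0 \colon E \ts{q}{p} TM \to M$ as the $T$-pullback of $p \colon TE \to E$ along $\xi$, and since $p \in \mathcal{R}$ it follows that $q\pi_0 \in \mathcal{R}$. The canonical map $\iota_0 = (\mathrm{id}, 0q) \colon E \to E \ts{q}{p} TM$ satisfies $\pi_0 \iota_0 = \mathrm{id}$ and $q\pi_0 \iota_0 = q$, so $q$ is a retract of $q\pi_0$ in the arrow category over the identity on $M$, by the very diagram appearing in the corollary after \cref{lem:strong-differential-bundle-biproduct}. Because $\mathcal{R}$ is retractive, $q \in \mathcal{R}$, and therefore $T$-pullback powers of $q$ exist.

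Both hypotheses of \cref{cor:new-def-diff-bundle-with-negatives} are now satisfied, so $(q,\xi,\lambda)$ is a differential bundle. The main subtlety, and the point I would verify with most care, is that the retract/idempotent splitting machinery previously stated for \emph{strong differential bundles} only uses the pre-differential structure together with the strong universality $T$-pullback; otherwise the argument would circularly presuppose the conclusion we are trying to establish.
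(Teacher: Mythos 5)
Your proof is correct and follows essentially the same route as the paper's: forward direction via \cref{cor:new-def-diff-bundle-with-negatives}, and backward direction by using \cref{cor:descent-limit-iff} to upgrade the pullback to a $T$-pullback, deducing $q\in\mathcal{R}$ from retract-closure, and then applying \cref{cor:new-def-diff-bundle-with-negatives} again. In fact you make explicit a step the paper leaves implicit (that $q$ is a retract of $q\pi_0$, which lies in $\mathcal{R}$ as a $T$-pullback of $p_E$), and your observation that this retraction uses only the maps $\iota_0=(\mathrm{id},0q)$ and $\pi_0$ rather than the full strong-differential-bundle machinery correctly dispels the apparent circularity.
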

\begin{proof}
    The forward implication holds by \cref{cor:new-def-diff-bundle-with-negatives} so it remains to prove the reverse implication.
    First \cref{cor:descent-limit-iff} implies that both
    \[
        \begin{tikzcd}
            E \ts{q}{p} TM \rar{\nu} \dar[swap]{q\pi_0 = p\pi_1} & TE \dar{p} \\
            M \rar{\xi} & E
        \end{tikzcd}
        \hspace{0.5cm}
            \begin{tikzcd}
                 E \rar{\lambda} \dar[swap]{q} & TE \dar{p} \\
                 M \rar{(\xi,0)}& E \times TM
        \end{tikzcd}
    \]
    are $T$-limits. Then $q \in \mathcal{R}$ because $\mathcal{R}$ is closed to retracts, so $T$-pullback powers of $q$. By \cref{cor:new-def-diff-bundle-with-negatives}, $(q,\xi,\lambda)$ is a differential bundle.
\end{proof}

The category of smooth manifolds has a retractive display system - the class of smooth submersions - so, in particular, we have the following corollary.
\essentialAlgebraicDescent

\section{Differential bundles in smooth manifolds}
\label{sec:differential_bundles_are_vector_bundles}

In this section we prove that the category $VBun$ of (smooth) vector bundles is isomorphic to the category $DBun(SMan)$ of differential bundles in the category of smooth manifolds.
In \Cref{sub:vbun-are-dbun} we define a functor $\Psi:VBun \rightarrow DBun(SMan)$.
Then in \Cref{sub:dbun-are-vector} we define a functor $\Phi:DBun(SMan) \rightarrow VBun$ and show that $\Psi$ and $\Phi$ are inverses.

In this paper we follow Definition 5.9 in \cite{MR2723362} and work with manifolds that may have different dimensions in different connected components.
One advantage of this definition is that the category of smooth manifolds is idempotent complete.
The traditional definition of smooth manifold insists that all of the local co-ordinate systems have the same dimension even across different connected components (see for instance Definition 2.1 of \cite{MR1138207} and the \emph{pure manifolds} of 1.1 of \cite{MR1202431}).
The main result of this paper (differential bundles in the category of smooth manifolds are vector bundles) also holds for the traditional definition of a manifold and all of our proofs remain unchanged in this case.

\subsection{Vector bundles are differential bundles}
\label{sub:vbun-are-dbun}

In this section we define a functor $\Psi:VBun \rightarrow DBun(SMan)$ from the category of smooth vector bundles to the category of differential bundles in the category of smooth manifolds.
The main result of this paper is that $\Psi$ is invertible which we prove in \Cref{sub:dbun-are-vector}.
Since we allow our manifolds to have different dimensions in different connected components, it is natural to allow the dimension of the fibres of our vector bundles to have different dimensions in different connected components also.
Therefore the definition of a vector bundle that we use is a slight generalisation of the definition in Section 12.3 of \cite{MR2723362}.

\begin{definition}
  A \emph{(smooth) vector bundle} consists of a map $q:E \rightarrow M$ in the category of smooth manifolds such that:
  \begin{itemize}
    \item for all $m\in M$ each fibre $q^{-1}(m)$ is a vector space
    \item for all $m\in M$ there exists an open neighbourhood $U$ of $m$, a natural number $r$ and a fibre-preserving diffeomorphism $\phi:q^{-1}(U) \rightarrow U\times \mathbb{R}^r$ such that for all $x\in U$ the map $\phi|_{q^{-1}(x)}$ is a vector space isomorphism.
  \end{itemize}
\end{definition}

In the particular case where $E$ and $M$ have constant global dimensions $l$ and $k$ respectively the third condition implies that every fibre of a vector bundle $q: E \rightarrow M$ has dimension $l-k$.
(I.e. the dimension of the fibres is globally constant and we recover the definition of rank-$(l-k)$ vector bundle in Section 12.3 of \cite{MR2723362}.) 
Note that in the general case the dimension of the fibres of $q$ is still constant within each connected component of $M$ but is not necessarily globally constant.

\begin{proposition}\label{prop:vector-bundles-are-differential-bundles}
  Every vector bundle in the category of smooth manifolds is a differential bundle.
\end{proposition}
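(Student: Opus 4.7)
The plan is to apply \cref{intro:essentially-algebraic-definition}, so that it suffices to equip the vector bundle $q:E\to M$ with a pre-differential bundle structure $(q,\xi,\lambda)$ and verify that the Rosick\'{y} square
\[
\begin{tikzcd}
    E \rar{\lambda} \dar{q} & TE \dar{(p,Tq)} \\
    M \rar{(\xi,0)} & E \times TM
\end{tikzcd}
\]
is a pullback in the category of smooth manifolds. I would take $\xi:M\to E$ to be the zero section and define $\lambda:E\to TE$ as the vertical lift built from the scalar action $\bullet_q:\mathbb{R}\times E\to E$, namely $\lambda(e)=\frac{d}{dt}\bigl|_{t=0}(t\bullet_q e)$. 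Smoothness of $\lambda$ is immediate from smoothness of $\bullet_q$.

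Next I would verify the pre-differential bundle axioms. The identity $q\xi=\mathrm{id}_M$ is the definition of the zero section. The equation $p\lambda=\xi q$ holds because the curve $t\mapsto t\bullet_q e$ passes through $\xi(q(e))$ at $t=0$. The equation $\lambda\xi=0\xi$ follows because $t\bullet_q \xi(m)=\xi(m)$ for all $t$, making $\lambda(\xi(m))$ the zero tangent vector at $\xi(m)$. The coalgebra identity $\ell\lambda=T(\lambda)\lambda$ reduces, via the associativity $s\bullet_q(t\bullet_q e)=(st)\bullet_q e$ of the scalar action, to the symmetry of the mixed partial derivative of $(s,t)\mapsto (st)\bullet_q e$ at $(0,0)$; concretely, both sides compute the second-order jet of the bilinear expression $(st)\bullet_q e$ and hence agree.

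To establish the Rosick\'{y} pullback, I would argue locally. Pick a trivialisation $\phi:q^{-1}(U)\xrightarrow{\sim} U\times\mathbb{R}^r$ whose restriction to each fibre is linear; since fibrewise-linear diffeomorphisms commute with $\bullet_q$ and hence with $\lambda$, in the coordinates $(u,a,\dot u,\dot a)$ on $T(U\times\mathbb{R}^r)$ one obtains $\lambda(u,a)=(u,0,0,a)$, exactly as in \cref{lem:lift-in-local-coords}. The Rosick\'{y} square in these coordinates is the evident diagram whose pullback $U\times\mathbb{R}^r$ is hit bijectively by $\lambda$, so the square is a pullback over each trivialising neighbourhood. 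Since $q$ is a surjective submersion and the property of being a pullback along a submersion is local on the base, these local pullbacks assemble into a global one.

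The main obstacle is the compatibility of $\lambda$ with changes of trivialisation, without which the local calculation of the Rosick\'{y} square would not glue. This compatibility is precisely the statement that transition maps between trivialisations, being fibrewise linear, commute with the scalar action and hence intertwine the locally defined vertical lifts; verifying this directly is the most delicate step, but it is exactly where the hypothesis that $q:E\to M$ is a \emph{vector} bundle (rather than merely a fibre bundle) is used.
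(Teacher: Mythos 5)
Your proposal is correct and follows essentially the same route as the paper: both reduce the claim (via \cref{intro:essentially-algebraic-definition}) to checking the pre-differential bundle axioms and that the Rosick\'{y} square is a pullback, and both hinge on the local formula $\lambda(m,a)=(m,0,0,a)$. The only presentational difference is that you define $\lambda(e)=\tfrac{d}{dt}\big|_{0}(t\bullet_q e)$ intrinsically and then pass to a trivialisation, whereas the paper works in local coordinates from the start and records your intrinsic formula afterwards in \cref{rem:lift-in-terms-of-multiplication}.
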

\begin{proof}
If $q:E \rightarrow M$ is a vector bundle and $m\in M$ then we can write an element of $E$ in local coordinates as $(m, a)$ and an element of $T(E)$ in local coordinates as $(m, a, v, b)$ where $m\in M$, $a, b\in \mathbb{R}^r$ and $v\in \mathbb{R}^k$ where $r$ is the dimension of $q^{-1}(m)$ and $k$ is the dimension of $M$ in the component containing $m$.
Then $+_q:E_2 \rightarrow E$ is given by $+_q(m, a, b) = (m, a+b)$, $\xi:M \rightarrow E$ by $\xi(m) = (m, 0)$ and $\lambda:E \rightarrow T(E)$ by $\lambda (m, a) = (m, 0, 0, a)$.

First, we check the axioms of a pre-differential bundle.
The equality $q\xi = id$ is immediate.
Next
\[\lambda\xi(m) = \lambda(m, 0) = (m, 0, 0, 0) = 0(m, 0) = 0\xi (m)\]
and 
\[
  T(\lambda)\lambda(m, a) = T(\lambda)(m, 0, 0, a)
  = (m, 0, 0, 0, 0, 0, 0, a)
  = \ell(m, 0, 0, a)
  = \ell\lambda(m, a)
\]
because $T(\lambda)(m, a, v, b) = (m, 0, 0, a, v, 0, 0, b)$.
Now we check the universality of the lift.
So suppose that $\vec{a}:A \rightarrow T(E)$ satisfies $T(q)\vec{a} = 0q p\vec{a}$ and $p\vec{a} = \xi q p\vec{a}$ as in the diagram
\[\begin{tikzcd}
A \arrow[bend left]{rrd}{\vec{a}} \arrow[bend right]{ddr}[swap]{qp\vec{a}} \drar[dashed]{f} & &\\
& E \rar{\lambda} \dar{q} & T(E) \dar{(T(q), p)}\\
& M \rar{(0, \xi)} & T(M) \times E
\end{tikzcd}\]
and suppose that $\vec{a} = (m, a, v, b)$ in local coordinates.
Then the condition $T(q)\vec{a} = 0q p\vec{a}$ implies that $v = 0$ and the condition $p\vec{a} = \xi q p\vec{a}$ implies that $a = 0$.
Therefore there exists a factorisation $f: A \rightarrow E$ given by in local co-ordinates by $(m, a)$ which is unique because $\lambda$ is a monomorphism.
\end{proof}

\Cref{prop:vector-bundles-are-differential-bundles} formulates the lift $\lambda$ in terms of local coordinates.
The following remark reformulates the lift in terms of the scalar multiplication.

\begin{remark}\label{rem:lift-in-terms-of-multiplication}
 The function $T(\bullet_q)$ is given in local co-ordinates by the following formula:
 \[
   (s,t) \bullet^T_q (x,a,v,b) = (x,s\bullet_qa, v, t\bullet_qa + s\bullet_q b)
 \]
 therefore
 \[
    \partial 0^{\mathbb{R}}! \bullet^T_q 0a  = (0,1) \bullet^T_q (x,a,0,0)  = (x,0,0,a) =
   \lambda a
 \]
 where $0^{\mathbb{R}}$ is the additive unit of $\mathbb R$ and $\partial: \mathbb R \to T(\mathbb R)$ is defined by
 $x \mapsto (x,1)$.
 Therefore $\lambda = \partial 0^{\mathbb R}! \bullet^T_q 0$.
\end{remark}

\begin{lemma}
 The function defined in \Cref{prop:vector-bundles-are-differential-bundles} extends to a functor $\Psi:VBun \rightarrow DBun(SMan)$ from the category of vector bundles to the category of differential bundles in the category of smooth manifolds.
 \end{lemma}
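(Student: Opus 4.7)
The plan is to extend $\Psi$ to morphisms by sending a vector bundle morphism $(f_1, f_0): (q: E\to M) \to (q': E' \to M')$ to the same underlying pair of smooth maps, and then to verify that this pair defines a morphism in the category of pre-differential bundles. Since \Cref{prop:vector-bundles-are-differential-bundles} places the image in $DBun(SMan)$ and the latter is a full subcategory of the category of pre-differential bundles (\Cref{def:cat-of-differential-bundles}), no further morphism-level conditions are imposed. The square $q' f_1 = f_0 q$ is already part of the data of a vector bundle morphism, so the only substantive check is the coalgebra equation $T(f_1)\lambda = \lambda' f_1$.

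First I would verify this equation in local coordinates. Near a point $m\in M$ a vector bundle morphism takes the form $f_1(m,a) = (f_0(m), L_m(a))$ where $a\mapsto L_m(a)$ is $\mathbb{R}$-linear and depends smoothly on $m$. A routine chain-rule computation gives $T(f_1)(m,a,v,b)$ with first three slots $f_0(m),\, L_m(a),\, Df_0|_m(v)$ and a fourth slot that decomposes into a $v$-directional derivative term plus $L_m(b)$. Evaluating at $\lambda(m,a) = (m,0,0,a)$ zeroes out the first three slots in the expected way and leaves $(f_0(m), 0, 0, L_m(a)) = \lambda'(f_0(m), L_m(a)) = \lambda' f_1(m,a)$, which is exactly the required identity.

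A more structural alternative avoids charts by invoking \Cref{rem:lift-in-terms-of-multiplication}, which presents the lift as $\lambda = \partial 0^{\mathbb{R}}! \bullet^T_q 0$. Since $f_1$ is fibrewise $\mathbb{R}$-linear it intertwines the scalar multiplications $\bullet_q$ and $\bullet_{q'}$, so $T(f_1)$ intertwines $\bullet^T_q$ and $\bullet^T_{q'}$ by functoriality and naturality; the displayed formula for $\lambda$ then forces $T(f_1)\lambda = \lambda' f_1$ formally. Either route settles the coalgebra equation.

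Functoriality of $\Psi$ is then automatic: because $\Psi$ leaves the underlying smooth maps unchanged, it preserves identities and composition on the nose. I do not expect a real obstacle here; the only computation of substance is $T(f_1)\lambda = \lambda' f_1$, which reduces to fibrewise $\mathbb{R}$-linearity of $f_1$ once $T(f_1)$ is written out explicitly.
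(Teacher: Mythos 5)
Your proposal is correct, and your second, ``structural'' route --- expressing the lift as $\lambda = \partial 0^{\mathbb{R}}!\bullet^T_q 0$ via \cref{rem:lift-in-terms-of-multiplication} and using that $f$ intertwines the scalar multiplications --- is exactly the argument the paper gives. The local-coordinate computation you sketch first is a valid alternative but is not needed once the scalar-multiplication formula is in hand.
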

 \begin{proof}
 The action of $\Psi$ on objects is given in \Cref{prop:vector-bundles-are-differential-bundles}.
 The action of $\Psi$ on arrows is the identity function.
 For this to make sense we need to check that if
 \[\begin{tikzcd}
   E_0 \dar{q_0} \rar{f} & E_1 \dar{q_1}\\
   M_0 \rar{f_0} & M_1
 \end{tikzcd}\]
 is a morphism of vector bundles then $\lambda_1 f = T(f) \lambda_0$ where $\lambda_i$.
 We use the formulation of the lifts $\lambda_i$ in terms of the scalar multiplication given in \Cref{rem:lift-in-terms-of-multiplication}:
 \begin{align*}
  \lambda_1 f &= T(\bullet_1)(\partial 0^{\mathbb R}!, 0) f\\
  &= T(\bullet_1)(\partial 0^{\mathbb R}!, 0 f)\\
  &= T(\bullet_1)(\partial 0^{\mathbb R}!, T(f)0)\\
  &= T(\bullet_1)(id \times T(f))(\partial 0^{\mathbb R}!, 0)\\
  &= T(\bullet_1 (id \times f))(\partial 0^{\mathbb R}!, 0)\\
  &= T(f \bullet_0)(\partial 0^{\mathbb R}!, 0) = T(f)\lambda_0
 \end{align*}
 where the penultimate equality follows from the fact that $f$ preserves scalar multiplication.
\end{proof}

\subsection{Differential bundles in smooth manifolds are vector bundles}
\label{sub:dbun-are-vector}

In \Cref{sub:vbun-are-dbun} we constructed a functor $\Psi:VBun \rightarrow DBun(SMan)$ from the category of smooth vector bundles to the category of differential bundles in the category of smooth manifolds.
In this section we construct a functor $\Phi:DBun(SMan) \rightarrow VBun$ and show that $\Psi$ and $\Phi$ are inverses.
Our general strategy is to recall that the category of vector bundles is closed under pullback and idempotent splittings.
Then we can apply \Cref{cor:splitting-of-differential-bundles} to obtain our result.
So first we recall that the pullback of a vector bundle is a vector bundle.

\begin{lemma}\label{lem:pullback-of-vector-bundle}
    Let $q_0: E_0 \rightarrow M_0$ be a vector bundle with addition $+_0$, zero $\xi_0$ and scalar multiplication $\bullet_0$.
    If
    \[\begin{tikzcd}
    E_1 \rar{\iota_0} \dar{q_1} & E_0 \dar{q_0} \\
    M_1 \rar{f} & M_0
    \end{tikzcd}\]
    is a pullback then $q_1:E_1 \rightarrow M_1$ is a vector bundle with addition $+_1$, zero $\xi_1$ and multiplication $\bullet_1$ such that:
    \begin{itemize}
        \item $\iota_0(a+_1 b) = \iota_0 a +_0 \iota_0 b$
        \item $\iota_0 \xi_1 = \xi_0 f$
        \item $\iota_0(r\bullet_1 a) = r\bullet_0 \iota_0 a$
    \end{itemize}
    where $a, b \in E_1$ such that $q_1(a) = q_1(b)$.
\end{lemma}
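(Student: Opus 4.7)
The plan is to use the universal property of the pullback to transport the $\mathbb{R}$-module structure on $q_0$ to a module structure on $q_1$, and then pull back local trivialisations of $q_0$ along $f$ to obtain local trivialisations of $q_1$.

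First, I would construct the three operations by factoring through the pullback. The composite $+_0(\iota_0\times \iota_0):E_1\ts{q_1}{q_1} E_1 \to E_0$ together with the common projection $q_1\pi_0 = q_1\pi_1: E_1\ts{q_1}{q_1} E_1 \to M_1$ forms a cone over $M_1\xrightarrow{f} M_0 \xleftarrow{q_0} E_0$ (since $q_0 +_0 = q_0\pi_0 = f q_1\pi_0$), and so induces a unique $+_1$ satisfying the first displayed equation. Analogously, $\xi_1$ is induced by the cone $(\xi_0 f, \mathrm{id}_{M_1})$ and $\bullet_1$ by the cone $(\bullet_0(\mathrm{id}\times \iota_0), q_1\pi_1)$; these give the other two displayed equations by construction. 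Because $\iota_0$ is a monomorphism, each module axiom for $q_1$ (associativity, commutativity, unit, distributivity, compatibility of the two additions under scalar multiplication) reduces after postcomposition with $\iota_0$ to the corresponding axiom for $q_0$.

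Next, I would verify that each fibre $q_1^{-1}(m)$ is an $\mathbb{R}$-vector space. The pullback of smooth manifolds restricts to a pullback of the underlying sets, so $\iota_0$ restricts to a bijection $q_1^{-1}(m)\to q_0^{-1}(f(m))$, and the construction above transports the vector space structure on $q_0^{-1}(f(m))$ across this bijection.

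Finally, I would verify local triviality. Given $m\in M_1$, pick an open neighbourhood $U\ni f(m)$ in $M_0$ with a fibre-preserving, fibrewise-linear diffeomorphism $\phi:q_0^{-1}(U)\to U\times \mathbb{R}^r$. Setting $V:=f^{-1}(U)$, which is open in $M_1$, the pullback pasting lemma gives $q_1^{-1}(V)\cong V\ts{f}{q_0} q_0^{-1}(U)$, and composing this identification with $\mathrm{id}_V\times (\pi_{\mathbb{R}^r}\phi)$ produces a fibre-preserving diffeomorphism $\psi:q_1^{-1}(V)\to V\times \mathbb{R}^r$, which is linear on each fibre because $\phi$ is and $\iota_0$ intertwines the operations by construction. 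No step is expected to be a real obstacle; the mild care needed is ensuring that the algebraic structure induced by the pullback agrees, fibrewise, with the one transported across $\iota_0$, but both are characterised by the same equations through the monomorphism $\iota_0$, so this is automatic.
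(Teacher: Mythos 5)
The paper states this lemma without proof, recalling it as a standard fact about pulling back vector bundles, so there is no in-paper argument to compare against; your proof is the standard one and its overall structure (induce the operations via the universal property of the pullback, transport the fibrewise linear structure, pull back trivialisations along $f^{-1}(U)$) is correct.

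One justification is wrong as stated, though easily repaired: $\iota_0$ is \emph{not} a monomorphism in general. It is the pullback of $f$ along $q_0$, and $f$ need not be monic --- e.g.\ pulling back a trivial bundle $\mathbb{R}^r \to \ast$ along $M_1 \to \ast$ gives $\iota_0 = \pi_{\mathbb{R}^r}: M_1 \times \mathbb{R}^r \to \mathbb{R}^r$. What is true, and is all you need, is that the pullback projections $(\iota_0, q_1)$ are jointly monic; equivalently, $\iota_0$ restricts to an injection (indeed a bijection) on each fibre $q_1^{-1}(m) \to q_0^{-1}(f(m))$. Every identity you need to verify --- the module axioms for $(+_1,\xi_1,\bullet_1)$ and the agreement of the induced structure with the transported fibrewise structure --- is an equation between maps into $E_1$ whose two sides already agree after postcomposition with $q_1$ (both sides are morphisms over $M_1$ by construction), so postcomposition with $\iota_0$ and the corresponding axiom for $q_0$ finishes the argument via joint monicity. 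With that substitution the proof goes through; the local triviality step, using $V = f^{-1}(U)$ and the pasting lemma to identify $q_1^{-1}(V)$ with $V \ts{f}{q_0} q_0^{-1}(U) \cong V \times \mathbb{R}^r$, is fine as written.
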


Next we recall that the class of vector bundles is Cauchy complete (every idempotent splits).

\begin{lemma}\label{lem:image-idempotent-vector-bundle}
  Let $q_1:E_1 \rightarrow M_1$ be a vector bundle with addition $+_1$, zero $\xi_1$ and multiplication $\bullet_1$.
  If
  \[\begin{tikzcd}
  E_1 \rar{\phi} \dar{q_1} & E_1 \dar{q_1}\\
  M_1 \ar[equals]{r} & M_1
  \end{tikzcd}\]
  is an idempotent vector bundle endomorphism over the fixed base space $M$ then its image $q_2:E_2 \rightarrow M_1$ is a vector bundle with addition $+_2$, zero $\xi_2$ and multiplication $\bullet_2$ such that:
  \begin{itemize}
      \item $S(a+_2 b) = Sa +_1 Sb$
      \item $S\xi_2 = \xi_1$
      \item $S(r\bullet_2 a) = r\bullet_1 Sa$
  \end{itemize}{}
  where $S$ is the equaliser of $\phi$ and $id$ and where $a, b \in E_2$ such that $q_2(a) = q_2(b)$.
\end{lemma}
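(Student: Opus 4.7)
The plan is to exploit the fact that the category of smooth manifolds (under the convention adopted in the previous subsection) is idempotent complete, and then verify that the idempotent splitting in $\mathsf{SMan}$ produces a vector bundle with the stated operations. Write $\phi = SR$ with $RS = \mathrm{id}_{E_2}$, where $S$ is the equaliser of $\phi$ and $\mathrm{id}$. Because $\phi$ is fibrewise (it lives over the identity on $M_1$) we have $q_1 \phi = q_1$, so $q_2 := q_1 S : E_2 \to M_1$ is well defined, and we may transport the algebraic structure along the splitting by setting $\xi_2 := R\xi_1$, $+_2 := R\,\circ\,+_1\,\circ\,(S\times S)$, and $\bullet_2 := R\,\circ\,\bullet_1\,\circ\,(\mathrm{id}\times S)$. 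The three compatibility equations in the statement (e.g.\ $S(a +_2 b) = Sa +_1 Sb$) then reduce to verifying $\phi S = S$, which is immediate from $\phi = SR$ and $RS = \mathrm{id}$, together with the fact that $\phi$ is a morphism of vector bundles and so preserves $+_1$, $\xi_1$, and $\bullet_1$.

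Next, I would verify the two axioms of a vector bundle. Fibrewise: for each $m \in M_1$, the fibre $q_2^{-1}(m)$ is the splitting of the linear idempotent $\phi|_{q_1^{-1}(m)}$ on the vector space $q_1^{-1}(m)$, so it carries a canonical vector space structure making $S|_{q_2^{-1}(m)}$ linear. The transported operations on $E_2$ restrict on each fibre to this structure, by construction.

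The main obstacle is verifying local triviality of $q_2$. The plan is to use the fact that for an idempotent linear endomorphism of a finite-dimensional vector space, the rank equals the trace. Around any point $m_0 \in M_1$ choose a local trivialisation $\phi_U : q_1^{-1}(U) \xrightarrow{\cong} U \times \mathbb{R}^r$ of $q_1$; under this identification $\phi$ becomes a smooth family $P : U \to M_r(\mathbb{R})$ of idempotents. The function $u \mapsto \operatorname{rank} P(u) = \operatorname{tr} P(u)$ is continuous and integer-valued, hence locally constant. Shrinking $U$, fix a rank $s$. Choose a basis $v_1,\dots,v_s$ of $\operatorname{im} P(m_0)$ and extend by vectors $v_{s+1},\dots,v_r$ spanning $\ker P(m_0)$; by continuity the vectors $P(u)v_1,\dots,P(u)v_s$ remain a basis of $\operatorname{im} P(u)$ on a neighbourhood of $m_0$. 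The smooth map
\[
U \times \mathbb{R}^s \longrightarrow q_1^{-1}(U), \qquad (u, a_1,\dots,a_s) \longmapsto \phi_U^{-1}\!\bigl(u, \sum_i a_i P(u) v_i\bigr),
\]
lands inside the image of $\phi$ on $q_1^{-1}(U)$, and applying $R$ gives the desired fibre-preserving, fibrewise-linear local trivialisation of $q_2$ over $U$. Smoothness and the inverse are provided by the implicit function theorem applied to the smoothly varying basis.

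With local triviality in hand, $q_2$ satisfies both clauses of the definition of a smooth vector bundle, and the three displayed equations give exactly the intertwining of operations asserted by the lemma. The only non-routine ingredient is the local-constancy of rank argument; everything else is a direct unwinding of the idempotent splitting and the linearity of $\phi$.
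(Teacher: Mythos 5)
Your proof is correct and follows essentially the same route as the paper: the crux in both is that the rank of the idempotent equals its trace on each fibre, hence is continuous, integer-valued and therefore locally constant. The only difference is one of detail — the paper then cites an external result (that the image of a locally-constant-rank bundle endomorphism is a sub-bundle) where you construct the local trivialisation of the image explicitly via a smoothly varying frame $P(u)v_1,\dots,P(u)v_s$, and you also spell out the transport of the algebraic structure along the splitting, which the paper leaves implicit.
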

\begin{proof}
    Since the morphism between base spaces is the identity, we only need to consider the image of $\phi$.
    Following Proposition 1 in \cite{MR143225} it suffices to check that $\phi_1$ has locally constant rank. However since $\phi_1$ is a projection its rank is equal to its trace.
    Since the trace is continuous and the rank takes integer values, we conclude that the rank of $\phi_1$ is locally constant.
\end{proof}

\begin{remark}
    The proof of  \cref{lem:image-idempotent-vector-bundle} remains unchanged for the traditional definition of a smooth manifold because the map between the base spaces is the identity.
\end{remark}

The following example shows that the above result does not necessarily hold if
$(\phi, id)$ is not idempotent.

\begin{example}
  The kernel of the vector bundle morphism
  \[
    (\phi, id): (\mathbb{R}\times \mathbb{R}, \pi_0) \rightarrow
    (\mathbb{R}\times \mathbb{R}, \pi_0)
  \]
  where $\phi: (x, r) = (x, r\bullet x)$ is the union of $\{0\}\times
  \mathbb{R}$ and $(\mathbb{R}\setminus \{0\})\times \{0\}$ in the category of
  topological bundles. Therefore the kernel does not necessarily exist in the
  category of smooth vector bundles if $\phi_1$ is not idempotent.
\end{example}

Now we combine our previous results to prove the main result of this paper.

\begin{proposition}\label{prop:differential-bundles-are-vector-bundles}
    If $(q:E \rightarrow M, +, \xi, \lambda)$ is a differential bundle in the category of smooth manifolds then $q$ is the projection of a vector bundle with addition $+$, zero $\xi$ and scalar multiplication $\bullet$ satisfying $\lambda (r\bullet a) = r\bullet_p \lambda a$.
\end{proposition}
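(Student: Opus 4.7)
The strategy is to realise $q: E \to M$ as the splitting of an idempotent vector bundle endomorphism, by combining \cref{cor:splitting-of-differential-bundles} with \cref{lem:pullback-of-vector-bundle} and \cref{lem:image-idempotent-vector-bundle}, and then to check that the vector bundle structure obtained on $E$ agrees with the given differential bundle operations.

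First, I apply \cref{cor:splitting-of-differential-bundles}. The category of smooth manifolds has negatives, and since $p: TE \to E$ is a submersion the pullback $T_M(E)$ of $p$ along $\xi$ exists. The corollary then presents $(q,\xi,\lambda)$ as the splitting of a linear idempotent $\chi$ on the differential bundle $\pi_M: T_M(E) \to M$, with section $(\lambda, q): E \to T_M(E)$ and retraction $K: T_M(E) \to E$ over the identity on $M$.

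Second, I equip $\pi_M$ with a vector bundle structure. The tangent bundle $p: TE \to E$ is a smooth vector bundle, so by \cref{lem:pullback-of-vector-bundle} the pullback $\pi_M$ inherits a vector bundle structure characterised by the embedding $\iota_M: T_M(E) \hookrightarrow TE$ preserving addition, zero and scalar multiplication. The differential bundle structure on $\pi_M$ used in the first step is pulled back from the differential bundle $p$ along $\xi$, and is likewise characterised by $(\iota_M,\xi)$ being a linear morphism of differential bundles. Since $\Psi$ is the identity on underlying arrows and the tangent bundle's differential bundle structure \emph{is} its vector bundle structure viewed through $\Psi$, the two structures on $\pi_M$ coincide. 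Consequently $\chi$, which is linear as a differential bundle endomorphism, is also a vector bundle endomorphism of $\pi_M$ over $M$.

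Third, I apply \cref{lem:image-idempotent-vector-bundle} to the idempotent vector bundle morphism $\chi$. Its image inherits a vector bundle structure, and because $\chi$ already splits in $\mathsf{SMan}$ as $(\lambda, q) \circ K$, this image is precisely $q: E \to M$, with section $(\lambda, q)$ and retraction $K$. The lemma characterises the induced addition and zero on $E$ by the requirement that $(\lambda, q)$ preserve them; since $(\lambda, q)$ is simultaneously a linear morphism of differential bundles $(q, +, \xi, \lambda) \to (\pi_M, +_{\pi_M}, (\xi,0), \lambda_M)$, and the differential bundle and vector bundle operations on $\pi_M$ have been identified, these induced operations must be $+$ and $\xi$. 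For the induced scalar multiplication $\bullet$, the defining equation $(\lambda, q)(r \bullet a) = r \bullet_{\pi_M} (\lambda a, qa)$ becomes, after postcomposition with $\iota_M$ and using the third bullet of \cref{lem:pullback-of-vector-bundle}, exactly $\lambda(r \bullet a) = r \bullet_p \lambda a$.

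\textbf{Main obstacle.} The technical heart of the argument is Step 2: verifying that the differential bundle structure on $\pi_M$ employed in \cref{cor:splitting-of-differential-bundles} agrees with the one produced by $\Psi$ from the vector bundle pullback. Without this identification one cannot reinterpret the linearity of $\chi$ as vector bundle linearity, and \cref{lem:image-idempotent-vector-bundle} cannot be brought to bear; fortunately the identification is immediate from the fact that both structures are uniquely pinned down by the monomorphism $\iota_M$.
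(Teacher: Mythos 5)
Your proposal is correct and follows essentially the same route as the paper's own proof: \cref{cor:splitting-of-differential-bundles} followed by \cref{lem:pullback-of-vector-bundle} and \cref{lem:image-idempotent-vector-bundle}, then identifying the induced operations with $+$ and $\xi$ via the monomorphism $\lambda = \iota_M(\lambda,q)$. Your Step 2 (checking that the differential-bundle and vector-bundle structures on $\pi_M$ agree, so that the linear idempotent $\chi$ is a vector bundle endomorphism) is a detail the paper leaves implicit, but it does not change the argument.
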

\begin{proof}
    \Cref{cor:splitting-of-differential-bundles} shows that $q$ is a retract of a pullback of $p:T(E) \rightarrow E$.
    First apply \Cref{lem:pullback-of-vector-bundle} with $\iota_0 = \iota_M$ and second apply \Cref{lem:image-idempotent-vector-bundle} with $S = (\lambda, q)$.
    Since $\lambda = \iota_M (\lambda, q)$ therefore $q: E \rightarrow M$ is a vector bundle with addition $+_2$, zero $\xi_2$ and scalar multiplication $\bullet_2$ such that:
    \begin{itemize}
        \item $\lambda(a +_2 b) = \lambda a +_p \lambda b$,
        \item $\lambda \xi_2 = 0 \xi$
        \item $\lambda(r \bullet_2 a) = r\bullet_p \lambda a$.
    \end{itemize}{}
    Therefore $+_2 = +$ because $\lambda:(E, q)\Rightarrow (T(E), p)$ is an additive bundle morphism.
    Also $\xi_2 = \xi$ because of the pre-differential bundle axiom $\lambda \xi = 0 \xi$.
\end{proof}

\begin{lemma}\label{lem:functor-Phi}
The function defined in \Cref{prop:differential-bundles-are-vector-bundles} extends to a functor $\Phi:DBun(SMan) \rightarrow VBun$ from the category of differential bundles in the category of smooth manifolds to the category of vector bundles.
\end{lemma}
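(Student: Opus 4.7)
The plan is to define $\Phi$ as the identity on morphisms, i.e.\ to send a differential bundle morphism $(f_1, f_0)$ to the same pair of smooth maps regarded as a morphism between the vector bundles produced by \Cref{prop:differential-bundles-are-vector-bundles}. If this assignment is well-defined then preservation of identities and composition is automatic, so the entire content of the lemma reduces to the claim that any morphism of differential bundles is already a morphism of the underlying vector bundles.

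Commutation with the projections $q, q'$ is part of the definition of a differential bundle morphism. Preservation of the zero sections and of the fibred addition follows from \Cref{prop:linear-is-additive}, because every differential bundle satisfies Rosick\'{y}'s universality diagram. The only non-trivial axiom to check is preservation of scalar multiplication, since this operation is not part of the primitive data of a differential bundle morphism. Here I would invoke the description from the proof of \Cref{prop:differential-bundles-are-vector-bundles}: the scalar multiplication $\bullet$ induced on $q: E \to M$ is characterised by $\lambda(r \bullet a) = r \bullet_p \lambda a$, where $\bullet_p$ denotes scalar multiplication on the tangent vector bundle $p: TE \to E$.

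Given a morphism of differential bundles $(f_1, f_0): (q,\xi,\lambda) \to (q',\xi',\lambda')$, the plan is to compare $\lambda'(f_1(r \bullet a))$ with $\lambda'(r \bullet' f_1(a))$ using three ingredients: (i) the coalgebra identity $\lambda' f_1 = T(f_1) \lambda$, which is part of the definition of a differential bundle morphism; (ii) the characterising equation $\lambda'(r \bullet' b) = r \bullet_p \lambda' b$ for the induced scalar multiplication on $q'$; and (iii) the fact that $T(f_1): TE \to TE'$ is a vector bundle morphism over $f_1$, so it intertwines the $\bullet_p$ on each side. Chaining these three identities shows the two composites agree, and since $\lambda'$ is a monomorphism (as observed at the start of the proof of \Cref{lem:prediff-addition}, being the pullback of the split monomorphism $(\xi',0)$), this equality lifts to $f_1(r \bullet a) = r \bullet' f_1(a)$, as desired.

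The main obstacle is exactly this scalar multiplication clause: the vector bundle structure on $\Phi(q)$ is reconstructed indirectly from the lift, so its preservation by $f_1$ is not immediate from the bare definition of a differential bundle morphism and must be extracted via the monomorphism property of $\lambda'$ combined with linearity of $T(f_1)$ on tangent fibres. The latter is a standard property of the tangent functor, visible in local coordinates via $T(f_1)(x,v) = (f_1(x), Df_1(x)\cdot v)$, so no essentially new machinery is required beyond assembling the ingredients already established in the paper.
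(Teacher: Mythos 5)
Your proposal is correct and follows essentially the same route as the paper: the action on arrows is the identity, additivity of a linear morphism is handled by \cref{prop:linear-is-additive} (the paper cites an external reference for this, but the content is the same), and preservation of the induced scalar multiplication is established by exactly the chain $\lambda' f_1(r\bullet a) = T(f_1)\lambda(r\bullet a) = T(f_1)(r\bullet_p \lambda a) = r\bullet_p T(f_1)\lambda a = r\bullet_p \lambda' f_1 a = \lambda'(r\bullet' f_1 a)$ followed by cancellation of the monomorphism $\lambda'$.
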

\begin{proof}
The action of $\Phi$ on objects is defined in \Cref{prop:differential-bundles-are-vector-bundles}.
Note that the scalar multiplication is the unique one satisfying $\lambda (r\bullet_q a) = r\bullet_p \lambda a$ because $\lambda$ is a monomorphism.
The action of $\Phi$ on arrows is the identity function.
For this to make sense we need to check that if $f$ and $f_0$ satisfy
\[\begin{tikzcd}
 E_0 \rar{f} \dar{q_0} & E_1 \dar{q_1} \\
 M_0 \rar{f_0} & M_1
\end{tikzcd}\]
and $\lambda f = T(f)\lambda$ then $f$ preserves the addition and scalar multiplication of $\Phi(q_0)$.
To see that the addition is preserved we refer to Proposition 2.16 of \cite{MR3684725}.
To see that the scalar multiplication is preserved we calculate:
\[
 \lambda f (r \bullet_0 a) = T(f)\lambda (r \bullet_0 a)
 = T(f)(r\bullet_p \lambda a)
 = r \bullet_p T(f)\lambda a
 = r\bullet_p \lambda fa
 = \lambda (r \bullet_1 fa)
\]
and so $f(r\bullet_0 a) = r \bullet_1 fa$ because $\lambda$ is a monomorphism.
\end{proof}

\isoOfCategories
\begin{proof}
We show that the functors $\Psi$ and $\Phi$ (defined in \Cref{prop:vector-bundles-are-differential-bundles} and \Cref{lem:functor-Phi} respectively) are inverses.
Since the action of both $\Phi$ and $\Psi$ on arrows is the identity function we only need to consider the action on objects.
Since $\Phi$ and $\Psi$ leave the projection, addition and zero section unchanged it in fact only remains to consider the lift and scalar multiplication.

In one direction let $(q:E \rightarrow M, \xi, +, \lambda)$ be a differential bundle in the category of smooth manifolds.
\Cref{prop:differential-bundles-are-vector-bundles} implies that $\Phi(q)$ has scalar multiplication satisfying $\lambda \bullet = \bullet_p (id\times \lambda)$.
\Cref{rem:lift-in-terms-of-multiplication} implies that $\Psi\Phi(q)$ has lift $\lambda_2 = T(\bullet)(\partial 0^{\mathbb R}!, 0)$.
We need to show that $\lambda = \lambda_2$:
\[
 T(\lambda)\lambda_2 = T(\lambda)T(\bullet)(\partial 0^{\mathbb R}!, 0)
 = T(\bullet_p)T(id\times \lambda)(\partial 0^{\mathbb R}!, 0)
 = T(\bullet_p)(\partial 0^{\mathbb R}!, T(\lambda)0)
 = T(\bullet_p)(\partial 0^{\mathbb R}!, 0)\lambda = \ell\lambda = T(\lambda)\lambda 
\]
and so $\lambda_2 = \lambda$ because $T(\lambda)$ is a monomorphism.

In the other direction let $(q:E \rightarrow M, \xi, +, \bullet)$ be a vector bundle.
\Cref{rem:lift-in-terms-of-multiplication} implies that $\Psi(q)$ is a differential bundle with lift given by $\lambda(m, a) = (m, 0, 0, a)$ in local coordinates.
\Cref{prop:differential-bundles-are-vector-bundles} implies that $\Phi\Psi(q)$ is a vector bundle with scalar multiplication $\bullet_2$ satisfying $\lambda\bullet_2 = \bullet_p(id\times \lambda)$.
We need to show that $\bullet = \bullet_2$:
\[ \lambda(r \bullet_2 (m, a)) = r\bullet_p \lambda(m, a)
 = r\bullet_p (m, 0, 0, a)
 = (m, 0, 0, r\cdot a)
 = \lambda(m, r\cdot a)
 =\lambda (r \bullet (m, a))
\]
and so $\bullet = \bullet_2$ because $\lambda$ is a monomorphism.
\end{proof}

We conclude by showing that it was in fact necessary to use the universality of the lift in the proof of \Cref{prop:differential-bundles-are-vector-bundles}.

\begin{example}\label{ex:prediff}
  We give an example of a pre-differential bundle (as defined in \cref{def:pre-differential-bundle}) in the category of smooth manifolds that is not a vector bundle.
  Let $q:\mathbb{R}^2 \rightarrow \mathbb{R}$ be defined by $q(x, y) = (1-\delta(y))x+\delta(y)x^3$ where $\delta$ is a smooth and monotonic bump function that is $0$ for $y\leq 0$ and $1$ for $y \geq 1$.
  To see that $q$ cannot be the projection of a vector bundle recall that every vector bundle is, in particular, a submersion.
  However, the derivative of $q$ the point $(0, 1)$ vanishes so $q$ is not a submersion.
  
  Now we show that $q$ is a pre-differential bundle with lift $\lambda(x, y) = (q(x, y), 0, 0, y)$ and zero section $\xi(z) = (z, 0)$. First we check:
  \begin{itemize}
  \item $q\xi(z) = q(z, 0) = (1-\delta(0))z+\delta(0)z^3 = z$
  \item $\lambda\xi(z) = \lambda(z, 0) = (z, 0, 0, 0) = 0\xi(z)$
  \end{itemize}
  and to check that $T(\lambda)\lambda = \ell\lambda$ we first note that in general:
  \[
      T(\lambda)(x, y, v, w) = (q(x, y), 0, 0, y, (1-\delta(y)+\delta(y)3x^2)v+(x^2-1)\delta'(y)xw, 0, 0,w)
  \]
  and so
  \begin{align*}
    T(\lambda)\lambda(x, y) &= T(\lambda)(q(x, y), 0, 0, y)\\
    &= (q(x, y), 0, 0, 0, 0, 0, 0, y)\\
                               &= \ell(q(x, y), 0, 0, y) = \ell\lambda(x, y)
  \end{align*}
  as required.
  \end{example}

\section{Acknowledgements}
This work would not have been possible without the contributions of Matthew Burke, who provided several key insights in the earlier stages of the paper.
The author would also like to thank Robin Cockett for his help in the editing process.

\subsection{Author statements}
\paragraph{Data availability statement}
Data sharing is not applicable to this article as no new data were created or analyzed in this study. 

\paragraph{Competing financial interests statement}
Benjamin MacAdam declares they have no competing financial interests.
\bibliography{references}{} \bibliographystyle{abbrv}
\end{document}